\documentclass{amsart}
\usepackage[usenames,dvipsnames]{color}
\usepackage[T1]{fontenc}
\usepackage{comment,amsthm,amssymb,amsmath,mathrsfs,indentfirst,tikz-cd,amsfonts,mathtools}
\usepackage{bbm}
\usepackage{euscript}
\usepackage[normalem]{ulem}
\usepackage{url}
\usepackage[all,arc,2cell]{xy}
\SelectTips{cm}{10}  
\UseAllTwocells
\usepackage{enumerate}
\usepackage{rotating}
\usepackage{circledsteps}

\usepackage{appendix}

\usepackage[svgnames,x11names,dvipsnames]{xcolor}
\usepackage[colorlinks=true,hyperindex,linkcolor=Blue, pagebackref=false, citecolor=Green,pdfpagelabels,unicode]{hyperref}

\newtheorem{thm}{Theorem}[section]
\newtheorem{cor}[thm]{Corollary}
\newtheorem{prop}[thm]{Proposition}
\newtheorem{lem}[thm]{Lemma}

\newtheorem{ques}[thm]{Question}

\theoremstyle{definition}
\newtheorem{defn}[thm]{Definition}
\newtheorem{defn-prop}[thm]{Definition-Proposition}

\newtheorem{con}[thm]{Construction}
\newtheorem{exmp}[thm]{Example}

\newtheorem{notn}[thm]{Notation}
\newtheorem{rem}[thm]{Remark}
\newtheorem{warn}[thm]{Warning}

\usepackage[nameinlink,capitalise,noabbrev]{cleveref}

\crefname{thm}{Theorem}{Theorems}
\crefname{cor}{Corollary}{Corollaries}
\crefname{prop}{Proposition}{Propositions}
\crefname{lem}{Lemma}{Lemmas}
\crefname{prob}{Problem}{Problems}
\crefname{conj}{Conjecture}{Conjectures}
\crefname{defn}{Definition}{Definitions}
\crefname{defns}{Definitions}{Definitions}
\crefname{con}{Construction}{Constructions}
\crefname{exmp}{Example}{Examples}
\crefname{exmps}{Examples}{Examples}
\crefname{notn}{Notation}{Notations}
\crefname{rem}{Remark}{Remarks}
\crefname{warn}{Warning}{Warnings}
\crefname{conv}{Convention}{Conventions}
\crefname{asss}{Assumptions}{Assumptions}
\crefname{ass}{Assumption}{Assumptions}
\crefname{ques}{Question}{Questions}
\crefname{sec}{Section}{Sections}
\crefname{appendix}{Appendix}{Appendices}

\numberwithin{equation}{section}

\newcommand{\colim}{\mathop{\operatorname{colim}}}
\newcommand{\Fun}{\mathrm{Fun}}
\newcommand{\Mod}{\mathrm{Mod}}
\newcommand{\Alg}{\mathrm{Alg}}
\newcommand{\CAlg}{\mathrm{CAlg}}
\newcommand{\Cat}{\mathrm{\EuScript{C}at}}
\newcommand{\Sp}{\EuScript{S}p}
\newcommand{\Map}{\mathrm{Map}}
\newcommand{\Hom}{\underline{\mathrm{Hom}}}

\title[Constant spectral Mackey functors]{Constant spectral Mackey functors and the equivariant Steenrod algebra}
\author{Kailin Pan}
\address{Department of Mathematics, Indiana University Bloomington, USA}
\email{kailpan@iu.edu}
\date{}

\begin{document}

\begin{abstract}
    For a finite group $G$, we define a constant spectral $G$-Mackey functor as a genuine $G$-spectrum in which the restrictions between categorical fixed points are equivalences. We further identify the full subcategory of constant spectral $G$-Mackey functors with the category of modules over a certain $\mathbb{E}_1$-ring spectrum $R_G$. 

    When $G=C_{p^n}$ is a cyclic $p$-group, we provide an explicit construction of the $\mathbb{E}_1$-ring spectrum $R_{C_{p^n}}$. The proof uses a reconstruction theorem for genuine $C_{p^n}$-spectra, which might be of independent interest. As an application, we compute the $\mathbb{Z}$-graded $C_{p^n}$-equivariant mod $p$ Steenrod algebra.
\end{abstract}

\maketitle

\tableofcontents

\section{Introduction}

A fundamental theorem in equivariant stable homotopy theory identifies the symmetric monoidal category of genuine $G$-spectra $\Sp_G$ with the category of spectral $G$-Mackey functors, for any finite group $G$. This theorem was first proved by Guillou--May \cite{Guillou_2024} and later revisited by Barwick \cite{BARWICK2017646} and Barwick--Glasman--Shah \cite{Barwick_2020} in the context of parametrized homotopy theory. Alternative proofs were given by Nardin \cite{nardin2016parametrizedhighercategorytheory} and Clausen--Mathew--Naumann--Noel \cite{clausen2023descent}.

In the algebraic setting, an important class of discrete $G$-Mackey functors consists of the constant Mackey functors, namely those for which the restriction maps are isomorphisms. It turns out that the full subcategory of constant discrete $G$-Mackey functors is equivalent to the category of abelian groups. In other words, each abelian group determines a unique constant discrete $G$-Mackey functor in a functorial way.

However, the analogous statement fails in the spectral setting. For a finite group $G$, we consider the full subcategory (\cref{defn:constant_mackey_functor}) 
\[\Sp_G^{\mathrm{const}}\subseteq \Sp_G\] 
consisting of the genuine $G$-spectra in which the restrictions between categorical fixed points are equivalences. We refer to objects in $\Sp_G^{\mathrm{const}}$ as \emph{constant spectral $G$-Mackey functors}.

Taking categorical $G$-fixed points defines a functor from $\Sp_G^{\mathrm{const}}$ to $\Sp$, which is not essentially surjective whenever $G\neq \{e\}$. See \cref{exmp:non-essentially-surjectivity} for a counterexample. This motivates the following question. 

\begin{ques}\label[ques]{ques:constant_mackey_functor}
    Given a spectrum $X\in\Sp$ and a finite group $G$, what additional structure is needed to construct a constant spectral $G$-Mackey functor with categorical fixed points equivalent to $X$?
\end{ques}

We study this question using a Morita-theoretic approach. 

\begin{thm}[{\cref{thm:morita_description}}]
    For any finite group $G$, there exists a unique $\mathbb{E}_1$-ring spectrum $R_G$ such that the category $\Sp_G^{\mathrm{const}}$ is equivalent to $\Mod_{R_G}(\Sp)$ as categories over $\Sp$. That is, 
    \[\begin{tikzcd}
	{\Mod_{R_G}(\Sp)} && {\Sp_G^{\mathrm{const}}} \\
	& \Sp.
	\arrow["\simeq", dashed, from=1-1, to=1-3]
	\arrow[from=1-1, to=2-2]
	\arrow[from=1-3, to=2-2]
\end{tikzcd}\]
\end{thm}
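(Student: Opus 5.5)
The plan is to apply the Schwede--Shipley / Lurie Morita recognition theorem to the functor $U=(-)^G\colon \Sp_G^{\mathrm{const}}\to\Sp$ given by categorical $G$-fixed points. Recall the monadic form of the recognition theorem (Lurie, Higher Algebra 4.8.5.8 and 7.1.2.1). Let $\mathcal{C}$ be a stable presentable category, and let $U\colon\mathcal{C}\to\Sp$ be a functor that is exact, conservative, and preserves all small colimits. Then $U$ has a left adjoint $F$, and $U$ is monadic. Since $U$ is a colimit-preserving exact functor of stable categories, the monad $UF$ is $\Sp$-linear. So $UF\simeq R\otimes -$ for the $\mathbb{E}_1$-ring $R=\mathrm{End}_{\mathcal{C}}(F\mathbb{S})$, and we get $\mathcal{C}\simeq\Mod_R(\Sp)$ over $\Sp$. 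Equivalently, $F\mathbb{S}$ is a compact generator of $\mathcal{C}$ with $U\simeq \Map_{\mathcal{C}}(F\mathbb{S},-)$.

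The steps are as follows.

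First, I show that $\Sp_G^{\mathrm{const}}$ is a stable presentable subcategory of $\Sp_G$, closed under limits and colimits. For every subgroup $H\le G$, categorical fixed points $(-)^H$ preserve all limits, and all colimits because $\Sp_G$ is stable and $G/H_+$ is compact. Restriction maps between fixed points are natural transformations between such functors. Hence the condition "all restrictions $X^H\to X^K$ are equivalences" is stable under limits and colimits. The subcategory is also accessible, being cut out by equivalence conditions on accessible functors. Therefore it is presentable and stable, and the inclusion has both adjoints.

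Second, $U=(-)^G$ restricted to this subcategory preserves colimits and is exact, by the previous step. It is conservative: if $f$ is a map in $\Sp_G^{\mathrm{const}}$ with $f^G$ an equivalence, then $f^H$ is an equivalence for every $H$, since $X^G\to X^H$ is an equivalence for constant objects. Equivalences in $\Sp_G$ are detected on all categorical fixed points, so $f$ is an equivalence.

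Third, the left adjoint $F$ of $U$ is the composite of $\Sigma^\infty_G(G/G_+)=\mathbb{S}_G$ with the left adjoint $L$ of the inclusion $\Sp_G^{\mathrm{const}}\subseteq\Sp_G$. The Morita theorem then gives $\Sp_G^{\mathrm{const}}\simeq\Mod_{R_G}(\Sp)$ over $\Sp$, where $R_G=\mathrm{End}(L\mathbb{S}_G)=(L\mathbb{S}_G)^G$.

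For uniqueness, an equivalence $\Mod_R\simeq\Mod_{R'}$ over $\Sp$ identifies the forgetful functors. It therefore identifies their left adjoints, and so identifies the free objects $R$ and $R'$ together with their endomorphism $\mathbb{E}_1$-rings. So $R\simeq R'$ as $\mathbb{E}_1$-rings, and $R_G$ is unique up to equivalence.

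I expect the main obstacle to be the first step: checking that the restriction conditions really are closed under colimits, and that the localization $L$ exists. This depends on the restriction maps being natural and on fixed points commuting with colimits. Once those are in place, the rest is a formal application of the recognition theorem.
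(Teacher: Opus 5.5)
Your proposal is correct and follows essentially the paper's argument. Both show that the forgetful functor $\Sp_G^{\mathrm{const}}\to\Sp$ is conservative and preserves all limits and colimits, hence is monadic with a colimit-preserving monad $R_G\otimes-$; uniqueness then comes from the fact that such a monad determines the $\mathbb{E}_1$-ring. The paper phrases this last step as full faithfulness of $R\mapsto\Mod_R(\Sp)$ in \cref{lem:monadicity}, and it obtains presentability from pullbacks in $\EuScript{P}r^{\mathrm{L}}$ and $\EuScript{P}r^{\mathrm{R}}$ rather than from closure inside $\Sp_G$.

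One small slip: the recognition theorem as you recalled it also needs $U$ to preserve limits for a left adjoint to exist. Exact, conservative and colimit-preserving is not enough, e.g.\ $X\mapsto X\oplus X_{\mathbb{Q}}$ on $\Sp$ does not preserve products. This is harmless here, since your first step gives limit preservation and you construct $F$ explicitly via $L$ anyway.
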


Thus, to answer \cref{ques:constant_mackey_functor}, it suffices to determine the $\mathbb{E}_1$-algebra $R_G$.

When $G=C_{p^n}$ is a cyclic $p$-group for some prime $p$, we provide an explicit description of the $\mathbb{E}_1$-algebra $R_{C_{p^n}}$, thereby resolving \cref{ques:constant_mackey_functor} for cyclic $p$-groups.

\begin{thm}[{\cref{thm:computation_of_RCpn}}]\label[thm]{thm:intro_computation_of_RCpn}
    For a prime $p$ and a positive integer $n$, $R_{C_{p^n}}$ is equivalent to the free $\mathbb{E}_1$-algebra generated by the $\mathbb{E}_0$-algebra
    \[\mathbb{S}\to\mathrm{cof}(\tilde{R}_{C_{p^n}}\to\mathbb{S})\in\Alg_{\mathbb{E}_0}(\Sp)\simeq\Sp_{\mathbb{S}/},\]
    where the spectrum $\tilde{R}_{C_{p^n}}$ and the morphism $\tilde{R}_{C_{p^n}}\to\mathbb{S}$ are constructed in \cref{con:tilde_RCpn}.
\end{thm}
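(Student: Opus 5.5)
We will identify $R_{C_{p^n}}$ as the endomorphism ring of a compact generator and then compare it with a free $\mathbb{E}_1$-algebra. By the proof of \cref{thm:morita_description}, $\Sp_G^{\mathrm{const}}\simeq\Mod_{R_G}(\Sp)$ compatibly with the forgetful functor to $\Sp$. Hence $R_G\simeq\mathrm{End}(L\mathbb{S})^{\mathrm{op}}$, where $L$ is the left adjoint of the categorical fixed point functor $(-)^G\colon\Sp_G^{\mathrm{const}}\to\Sp$. Equivalently, $R_G$ is the monad $(-)^G\circ L$ evaluated on $\mathbb{S}$.

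The strategy is to use the reconstruction theorem for genuine $C_{p^n}$-spectra. This describes a $C_{p^n}$-spectrum through its geometric fixed points $\Phi^{C_{p^k}}X$, each with its residual action, together with gluing maps into Tate constructions. In these terms, the constancy condition says that all restriction maps $X^{C_{p^k}}\to X^{C_{p^{k-1}}}$ are equivalences. In particular, the underlying spectrum is $X^{e}\simeq X^{G}$, and this forces the residual actions to be trivialized in a specified way.

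The steps I would carry out, in order, are as follows.

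\begin{enumerate}[(1)]
\item Rewrite a constant object as an underlying spectrum $M$ together with recursively defined data. At each stage the condition "restriction is an equivalence" identifies $\Phi^{C_{p^k}}$ with the cofiber of a transfer-type map. Tracing through the isotropy separation square $X^{C_{p^k}}\to\Phi^{C_{p^k}}X$ versus $X^{hC_{p^k}}\to X^{tC_{p^k}}$ shows that the data amounts to a single structure map.
\item Show that this structure is exactly a map $\tilde{R}_{C_{p^n}}\otimes M\to M$. No associativity or unit condition is imposed on it beyond compatibility with the composite $\tilde{R}_{C_{p^n}}\to\mathbb{S}$, which records the degenerate contribution coming from the identity.
\item Conclude that $\Sp^{\mathrm{const}}_{C_{p^n}}$ is equivalent over $\Sp$ to modules over the free $\mathbb{E}_1$-algebra on the pointed spectrum $\mathbb{S}\to\mathrm{cof}(\tilde{R}_{C_{p^n}}\to\mathbb{S})$. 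Here I use that modules over a free $\mathbb{E}_1$-algebra on an $\mathbb{E}_0$-algebra $\mathbb{S}\to V$ are spectra $M$ equipped with a map $V\otimes M\to M$ restricting to the identity along $\mathbb{S}\otimes M\to V\otimes M$. Such a map is the same as a map $\tilde{R}\otimes M\to M$ together with a nullhomotopy of its composite with the map down to $M$, or dually a lift, matching (2).
\item Apply the uniqueness in \cref{thm:morita_description}: an $\mathbb{E}_1$-ring is determined by its module category over $\Sp$. This gives $R_{C_{p^n}}\simeq\mathrm{Free}_{\mathbb{E}_1}(\mathbb{S}\to\mathrm{cof}(\tilde{R}\to\mathbb{S}))$.
\end{enumerate}

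The main obstacle is step (1)–(2): showing that the recursive gluing data collapses to exactly one unstructured map out of $\tilde{R}_{C_{p^n}}\otimes M$, naturally in $M$. Establishing the equivalence of categories requires naturality, not just a pointwise identification. I would first treat $n=1$, where $\tilde{R}_{C_p}$ should be something like $\Sigma^{-1}\mathbb{S}^{tC_p}$, or the fiber of $\mathbb{S}_{hC_p}\to\mathbb{S}$. I would then induct on $n$ using the recollement $\Sp_{C_{p^n}}\simeq$ (Borel $C_{p^n}$-spectra glued with $\Sp_{C_{p^n}/C_p}$), matching the inductive construction in \cref{con:tilde_RCpn}.
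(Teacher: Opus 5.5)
\textbf{Verdict: genuine gap.} Your steps (3)--(4) are fine and match how the paper finishes: Hill's description of modules over $\mathrm{Free}^{\mathbb{E}_1}_{\mathbb{E}_0}$, then \cref{lem:monadicity}. But everything specific to $C_{p^n}$ is in steps (1)--(2). You leave these open, and the target you set for them is wrong.

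\textbf{The target in step (2) is wrong.} For a constant object with underlying spectrum $M$, the extra structure is not ``exactly one unstructured map out of $\tilde{R}_{C_{p^n}}\otimes M$''. It is a \emph{null-homotopy} of the specific map $\epsilon\otimes M\colon\tilde{R}_{C_{p^n}}\otimes M\to M$, where $\epsilon\colon\tilde{R}_{C_{p^n}}\to\mathbb{S}$ is the map of \cref{con:tilde_RCpn}.
\begin{itemize}
\item Your version cannot be right. A zero map always exists, so every spectrum would lift and $\Sp^{\mathrm{const}}_{C_{p^n}}\to\Sp$ would be essentially surjective, contradicting \cref{exmp:non-essentially-surjectivity}.
\item Your translation in step (3) is garbled accordingly. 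Since $\tilde{R}_{C_{p^n}}\otimes M\xrightarrow{\epsilon\otimes M}M\to V\otimes M$ is a cofiber sequence with $V=\mathrm{cof}(\epsilon)$, a map $V\otimes M\to M$ restricting to $\mathrm{id}_M$ is exactly a null-homotopy of $\epsilon\otimes M$. No additional map out of $\tilde{R}_{C_{p^n}}\otimes M$ appears.
\end{itemize}

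\textbf{The sketch of step (1) has a concrete error.} For $k\geqslant 2$ the proper subgroups of $C_{p^k}$ are the subgroups of $C_{p^{k-1}}$. So isotropy separation puts $X^{C_{p^k}}\to\Phi^{C_{p^k}}X$ over $(X^{C_{p^{k-1}}})^{hC_{p^k}/C_{p^{k-1}}}\to(X^{C_{p^{k-1}}})^{tC_{p^k}/C_{p^{k-1}}}$, not over $X^{hC_{p^k}}\to X^{tC_{p^k}}$. This is why only the order-$p$ quotients $BC_{p^i}/C_{p^{i-1}}$, together with residual orbits $(-)_{hC_{p^n}/C_{p^i}}$, enter $\tilde{R}_{C_{p^n}}$.

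\textbf{Your $n=1$ guesses are also off.} In fact $\tilde{R}_{C_p}=\Sigma^\infty BC_p\otimes\Sigma^\infty_+BC_p$.
\begin{itemize}
\item The factor $\Sigma^\infty_+BC_p$ comes from the transfer's source, $M_{hC_p}\simeq\Sigma^\infty_+BC_p\otimes M$.
\item The factor $\Sigma^\infty BC_p=\tilde{B}C_p$ comes from $\mathrm{cof}(M\to M^{hC_p})\simeq\Hom(\Sigma^\infty BC_p,M)$.
\item Your other guess, $\Sigma^{-1}\mathbb{S}^{tC_p}$, has $\pi_{-1}\neq 0$, whereas $\tilde{R}_{C_{p^n}}$ is $1$-connective.
\end{itemize}

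\textbf{The missing idea} is a reconstruction theorem in terms of \emph{categorical} fixed points, \cref{thm:structure_of_genuine_spectra}. There, the genuine structure beyond the naive one is, for each $1\leqslant i\leqslant n$, a transfer $X^{C_{p^{i-1}}}_{hC_{p^i}/C_{p^{i-1}}}\to X^{C_{p^i}}$ with a homotopy $\mathrm{res}\circ\mathrm{tr}\simeq\mathrm{Nm}$. Equivalently, it is a $\Delta^2$-filler of a $\Lambda^2_2$-diagram in $\Sp^{BC_{p^n}/C_{p^i}}$. Constancy is a condition on categorical fixed points, so it only affects the naive part (\cref{cor:obstruction_for_Cpn_on_const}). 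The rest is formal, and everything is natural in $M$, i.e.\ these are equivalences of categories over $\Sp$:
\begin{enumerate}
\item \cref{lem:cofiber} converts each filler into a null-homotopy of $M_{hC_{p^i}/C_{p^{i-1}}}\to M^{hC_{p^i}/C_{p^{i-1}}}\to\Hom(\tilde{B}C_{p^i}/C_{p^{i-1}},M)$.
\item \cref{lem:adjoint_pair}, applied to $\tilde{B}C_{p^i}/C_{p^{i-1}}\otimes-\dashv\Hom(\tilde{B}C_{p^i}/C_{p^{i-1}},-)$ and $(-)_{hC_{p^n}/C_{p^i}}\dashv\mathrm{triv}$, turns this equivariant null-homotopy into a non-equivariant one. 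It is a null-homotopy of $\epsilon\otimes M$ restricted to the summand $(\tilde{B}C_{p^i}/C_{p^{i-1}}\otimes BC_{p^i}/C_{p^{i-1}})_{hC_{p^n}/C_{p^i}}\otimes M$.
\item \cref{lem:coprod} merges the $n$ independent null-homotopies into one for the direct sum $\tilde{R}_{C_{p^n}}$. Nothing here is inductive in $n$, contrary to your plan to induct along the construction.
\item The dual of \cref{lem:cofiber} rewrites the result as the $\Lambda^2_0\subseteq\Delta^2$ extension problem of your step (3).
\end{enumerate}
If you stay with geometric fixed points and Tate gluing, you must reproduce all of this by hand. You would have to translate constancy through the correct isotropy separation squares and show that the gluing data decouples into these $n$ null-homotopies naturally in $M$. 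That is exactly what the proposal does not supply.
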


Intuitively, a constant spectral $C_{p^n}$-Mackey functor consists of 
\begin{itemize}
    \item a categorical fixed point spectrum $X\in\Sp$,
    \item a choice of a null homotopy of 
    \[\tilde{R}_{C_{p^n}}\otimes X\to\mathbb{S}\otimes X\simeq X,\]
\end{itemize}
which informally answers \cref{ques:constant_mackey_functor} in the case of $G=C_{p^n}$.

The proof of \cref{thm:intro_computation_of_RCpn} is based on a reconstruction theorem for genuine $C_{p^n}$-spectra. By the spectral Mackey functor theorem, a genuine $G$-spectrum can be recovered from its categorical fixed points together with the restrictions, transfers, and higher coherence data among them. For cyclic $p$-groups, the subgroup lattice of $C_{p^n}$ is simply a chain, so it suffices to consider the restrictions and transfers between adjacent subgroups and the compatibility with the additive norm maps. The remaining structure and coherence data are then uniquely determined.

\begin{thm}[{\cref{thm:structure_of_genuine_spectra}}]\label[thm]{thm:inro_structure_of_genuine_spectra}
    The category $\Sp_{C_{p^n}}$ of genuine $C_{p^n}$-spectra is equivalent to the limit of the following diagram 
    \[\begin{tikzcd}
	{\Sp^{BC_{p^n}}} &[-6em]& [-7em] {\Fun(\Delta^2,\Sp^{BC_{p^n}/C_{p}})} &[-7em]& [-3em]\cdots &[-3em]& [-8em] {\Fun(\Delta^2,\Sp^{BC_{p^n}/C_{p^n}})} \\
	& {\Fun(\Delta^1,\Sp^{BC_{p^n}/C_{p}})} && {\Fun(\Delta^1,\Sp^{BC_{p^n}/C_{p^2}})} && {\Fun(\Delta^1,\Sp^{BC_{p^n}/C_{p^n}}).}
	\arrow[from=1-1, to=2-2]
	\arrow[from=1-3, to=2-2]
	\arrow[from=1-3, to=2-4]
	\arrow[from=1-5, to=2-4]
	\arrow[from=1-5, to=2-6]
	\arrow[from=1-7, to=2-6]
    \end{tikzcd}\]
    Here, $\Delta^2=\{0<1<2\}$ and the structure maps are given by 
    \begin{itemize}
        \item the additive norm functor
        \[(-)_{hC_p}\to(-)^{hC_p}:\Sp^{BC_{p^n}}\to\Fun(\Delta^1,\Sp^{BC_{p^n}/C_{p}}),\]
        \item for $1\leqslant i\leqslant n$, the evaluation at $\{0<2\}\subseteq\Delta^2$
        \[\mathrm{ev}_{0<2}:\Fun(\Delta^2,\Sp^{BC_{p^n}/C_{p^i}})\to\Fun(\Delta^1,\Sp^{BC_{p^n}/C_{p^i}}), \]
        \item for $1\leqslant i\leqslant n-1$, the composition
        \[\Fun(\Delta^2,\Sp^{BC_{p^n}/C_{p^i}})\to \Sp^{BC_{p^n}/C_{p^i}}\to\Fun(\Delta^1,\Sp^{BC_{p^n}/C_{p^{i+1}}})\]
        of the evaluation at $\{1\}\subseteq\Delta^2$
        \[\mathrm{ev}_1(-):\Fun(\Delta^2,\Sp^{BC_{p^n}/C_{p^i}})\to \Sp^{BC_{p^n}/C_{p^i}}\]
        and the additive norm functor
        \[(-)_{hC_{p^{i+1}}/C_{p^{i}}}\to(-)^{hC_{p^{i+1}}/C_{p^{i}}}:\Sp^{BC_{p^n}/C_{p^i}}\to\Fun(\Delta^1,\Sp^{BC_{p^n}/C_{p^{i+1}}}). \]
    \end{itemize}
\end{thm}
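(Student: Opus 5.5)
We will prove the statement by induction on $n$ using the recollement (isotropy separation) of $\Sp_{C_{p^n}}$ along the family of proper subgroups, or equivalently by repeatedly gluing along the subgroup $C_p$. The first step is the basic gluing statement. For a normal subgroup $N\le G$, the category $\Sp_G$ is the lax limit (recollement) of the $N$-free part $\Sp^{BN}$-with-residual-action and the geometric $N$-fixed points. For $N=C_p\le C_{p^n}$ we have two pieces. The $C_p$-free part is $\Fun(B C_{p^n},\Sp)=\Sp^{BC_{p^n}}$. The $C_p$-geometric fixed points form $\Sp_{C_{p^n}/C_p}\simeq \Sp_{C_{p^{n-1}}}$. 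The gluing functor is $X\mapsto (X_{hC_p}\to X^{hC_p})$, whose Tate cofiber is the recollement gluing data $X\mapsto X^{tC_p}$.

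The plan is as follows.
\begin{enumerate}[(1)]
\item Write $\Sp_{C_{p^n}}$ as the pullback of $\Sp^{BC_{p^n}}\xrightarrow{(-)^{tC_p}}\Sp^{BC_{p^n}/C_p}\xleftarrow{\mathrm{ev}_1}\Fun(\Delta^1,\Sp^{BC_{p^n}/C_p})$ at the level of Borel data over the $C_p$-free part. Concretely, the categorical fixed points $X^{C_p}$ sit in the cofiber sequence $X_{hC_p}\to X^{C_p}\to\Phi^{C_p}X$, and the map to $X^{hC_p}$ refines this.
\item Repackage the "norm + Tate" pullback as a pullback along the additive norm. A $\Delta^2$-diagram $X_{hC_p}\to X^{C_p}\to X^{hC_p}$ whose composite $\{0<2\}$ is the norm is the same data as the recollement gluing. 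This explains the term $\Fun(\Delta^2,\Sp^{BC_{p^n}/C_p})$ with $\mathrm{ev}_{0<2}$ mapping to the norm. We would justify this by a formal lemma: for a stable category, $\Fun(\Delta^2,\mathcal C)\times_{\Fun(\Delta^1,\mathcal C)}\mathcal D$ along $\mathrm{ev}_{0<2}$ is equivalent to $\Fun(\Delta^1,\mathcal C)\times_{\mathcal C}\mathcal D$ via cofibers. This holds because a $2$-simplex with fixed edge $0\to2$ is equivalent to a factorization, i.e.\ to the data of the cofiber of $0\to1$ mapping to the cofiber of the fixed edge.
\item Iterate. The middle vertex $\mathrm{ev}_1$ recovers the genuine fixed points $X^{C_p}$ with residual $C_{p^n}/C_p$-action, seen in $\Sp^{BC_{p^n}/C_p}$. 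This object is the underlying Borel object of the next stage. Applying the same gluing for $C_{p^2}/C_p\le C_{p^n}/C_p$ to it produces the next norm functor $(-)_{hC_{p^{2}}/C_{p}}\to(-)^{hC_{p^{2}}/C_{p}}$. Inductively, $\Sp_{C_{p^{n}}/C_{p^i}}$-data are glued, and this assembles into the zigzag limit of the statement, with the last stage $\Fun(\Delta^2,\Sp^{BC_{p^n}/C_{p^n}})=\Fun(\Delta^2,\Sp)$.
\end{enumerate}

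The induction would be organized as the equivalence $\Sp_{C_{p^n}}\simeq \Sp^{BC_{p^n}}\times_{\Fun(\Delta^1,\Sp^{BC_{p^n}/C_p})}\mathcal E$. Here $\mathcal E$ is the subcategory of $\Fun(\Delta^2,\Sp^{BC_{p^n}/C_p})\times_{\Sp^{BC_{p^n}/C_p}}\Sp_{C_{p^n}/C_p}$ whose middle vertex is the underlying object. By induction, $\Sp_{C_{p^n}/C_p}$ is itself the analogous limit for $n-1$, and limits commute with the pullback, which gives the stated diagram.

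The main obstacle is step (1)–(2): identifying the middle-vertex map $X^{C_p}\to X^{hC_p}$ and the composite with the norm coherently and functorially, rather than just objectwise. For this we would invoke the spectral Mackey functor theorem. It gives the restriction $X^{C_p}\to X^{e}$, whose adjoint into homotopy fixed points is the canonical map, and the transfer $X_{hC_p}\to X^{C_p}$. The double coset formula $\mathrm{res}\circ\mathrm{tr}=\sum_{g}g$ identifies the composite with the norm map as a natural transformation of functors $\Sp^{BC_{p^n}}\to\Fun(\Delta^1,\Sp^{BC_{p^n}/C_p})$. Since the subgroup lattice is a chain, all other restriction/transfer coherences are composites of adjacent ones. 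We would argue this by comparing with the Glasman/Ayala–Mazel-Gee–Rozenblyum stratification of $\Sp_{C_{p^n}}$ over the poset $[n]$. In that stratification the gluing functors are the iterated Tate constructions, and the higher gluing data between non-adjacent strata are determined by adjacent ones.
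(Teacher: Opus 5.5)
Your overall plan (recollement along $C_p$, turning the Tate gluing into a factorization of the norm, induction on $n$) can be made to work. As written, however, the inductive step is not established, and the way you propose to establish it rests on incorrect claims.

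The whole content of the theorem is the equivalence you state at the end,
\[\Sp_{C_{p^n}}\simeq\Sp^{BC_{p^n}}\times_{\Fun(\Delta^1,\Sp^{BC_{p^n}/C_p})}\Fun(\Delta^2,\Sp^{BC_{p^n}/C_p})\times_{\Sp^{BC_{p^n}/C_p}}\Sp_{C_{p^n}/C_p};\]
the rest is substitution. Steps (1)--(2) do not produce this equivalence.

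Taken literally, (1) is false for $n\geq 2$. In $\Sp^{BC_{p^n}}\times_{\Sp^{BC_{p^n}/C_p}}\Fun(\Delta^1,\Sp^{BC_{p^n}/C_p})$ the objects over $X=0$ form $\Sp^{BC_{p^n}/C_p}$. By contrast, genuine $C_{p^n}$-spectra with vanishing underlying spectrum form $\Sp_{C_{p^n}/C_p}\simeq\Sp_{C_{p^{n-1}}}$. The recollement glues along $X\mapsto\Phi^{C_p}(X^h)$, which lands in the \emph{genuine} category $\Sp_{C_{p^n}/C_p}$ (its $C_{p^k}/C_p$-fixed points are $X^{tC_{p^k}}$). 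Applying your cofiber lemma in Borel spectra discards exactly the information that the factor $\Sp_{C_{p^n}/C_p}$ in $\mathcal E$ is supposed to carry. Nothing in (1)--(2) explains why a Borel factorization of the norm, plus a genuine lift of the middle vertex, recovers the genuine gluing.

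Your fallback does not close this gap either. In the stratification of $\Sp_{C_{p^n}}$ over $[n]$, the gluing functors between non-adjacent strata are proper Tate constructions, for example $X\mapsto\Phi^{C_{p^2}}(X^h)\simeq\mathrm{cof}\bigl((X^{tC_p})_{hC_{p^2}/C_p}\to X^{tC_{p^2}}\bigr)$. This is not the iterated Tate construction $(X^{tC_p})^{tC_{p^2}/C_p}$ in general; they agree, e.g., for bounded below $X$ by the Tate orbit lemma. So neither ``the gluing functors are iterated Tate constructions'' nor ``non-adjacent data are determined by adjacent data'' is available. Likewise, the double coset formula identifies $\mathrm{res}\circ\mathrm{tr}$ with the norm only up to homotopy, not as a coherent natural transformation.

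The missing idea is to carry out step (2) inside the genuine category, in three moves.
\begin{enumerate}
\item Take categorical $C_p$-fixed points of the isotropy separation sequence for the cofree $X^h$. This gives a cofiber sequence $((EC_{p^n})_+\otimes X^h)^{C_p}\to(X^h)^{C_p}\to\Phi^{C_p}(X^h)$ in $\Sp_{C_{p^n}/C_p}$. Your cofiber lemma, applied in $\Sp_{C_{p^n}/C_p}$, then rewrites the recollement as $\Sp^{BC_{p^n}}\times_{\Fun(\Delta^1,\Sp_{C_{p^n}/C_p})}\Fun(\Delta^2,\Sp_{C_{p^n}/C_p})$.
\item Let $L\dashv U\dashv R$ be the free and cofree adjoints of the forgetful functor $U\colon\Sp_{C_{p^n}/C_p}\to\Sp^{BC_{p^n}/C_p}$. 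The first term of the sequence is $L(X_{hC_p})$ (by the Adams isomorphism) and the second is $R(X^{hC_p})$. An argument as in \cref{lem:adjoint_pair}, used once for each end, identifies $\Delta^2$-diagrams $L(a)\to Y\to R(c)$ with Borel diagrams $a\to UY\to c$ together with $Y$. This is precisely your $\mathcal E$.
\item You still need to identify the resulting natural map $X_{hC_p}\to X^{hC_p}$ with the additive norm coherently, citing Nikolaus--Scholze, or Quigley--Shah as the paper does.
\end{enumerate}

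With these inputs your induction becomes a valid alternative to the paper's argument, which avoids recollements altogether. The paper shows that both $\Sp_{C_{p^n}}$ and the limit are monadic over $\Sp^{\mathrm{naive}}_{C_{p^n}}$, using \cref{lem:structure_of_naive_spectra}. It builds the comparison functor from the tom Dieck splitting and matches the left adjoints by a mapping-space computation. It also constructs colimits in the limit by hand, because the norm does not preserve colimits. Your route would trade these monadicity and colimit arguments for the recollement theorem and the Adams isomorphism.
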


\begin{rem}
    Informally, a genuine $C_{p^n}$-spectrum $X\in \Sp_{C_{p^n}}$ consists of the following data:
    \begin{itemize}
        \item for $0\leqslant i \leqslant n$, a categorical fixed point spectrum $X^{C_{p^i}}\in\Sp^{BC_{p^n}/C_{p^i}}$,
        \item for $1\leqslant i \leqslant n$, a Borel $C_{p^n}/C_{p^i}$-equivariant diagram
        \[\begin{tikzcd}
	       {X^{C_{p^{i-1}}}_{hC_{p^i}/C_{p^{i-1}}}} && {X^{C_{p^{i}}}} \\
	       & {(X^{C_{p^{i-1}}})^{hC_{p^i}/C_{p^{i-1}}}}
	       \arrow["tr", dashed, from=1-1, to=1-3]
	       \arrow["norm"', from=1-1, to=2-2]
	       \arrow["res", dashed, from=1-3, to=2-2]
        \end{tikzcd}\in \Sp^{BC_{p^n}/C_{p^i}}\]
        encoding the restriction, transfer, and additive norm maps.
    \end{itemize}
\end{rem}

We next apply these results on constant spectral Mackey functors to equivariant Steenrod algebra computation. The classical non-equivariant Steenrod operations \cite{steenrod1947products} have been well studied and widely used in differential topology and homotopy theory, while the behavior of the equivariant analogue is less well understood. 

To the author's knowledge, a complete computation of the $RO(G)$-graded $G$-equivariant mod $p$ Steenrod algebra
\[\mathscr{A}_{G,p}^{\star}:=[\Sigma^{-\star}\underline{\mathbb{F}}_p,\underline{\mathbb{F}}_p]_{\Sp_{G}}\]
is currently known only when the order of $G$ is at most $2$ \cite{voevodsky2003reduced}. In \cite{bhattacharya2025equivariant}, Bhattacharya--Waugh--Zeng--Zou constructed a collection of $G$-equivariant cohomology operations and conjectured that the $RO(G)$-graded $G$-equivariant Steenrod algebra is generated by the cohomology operations they constructed. Complete computations of the dual Steenrod algebra are known in additional cases, including $C_p$ for odd primes \cite{hu2026mathbbzpequivariantdualsteenrodalgebra,sankar2022}.

As a subalgebra of the $RO(G)$-graded equivariant Steenrod algebra, the $\mathbb{Z}$-graded equivariant Steenrod algebra is, a priori, expected to be more tractable. In \cite{CARUSO_1999}, Caruso computed the additive structure of the $\mathbb{Z}$-graded $C_p$-equivariant mod $p$ Steenrod algebra, i.e., the dimension in each degree as an $\mathbb{F}_p$-vector space.

The $\mathbb{Z}$-graded $G$-equivariant Steenrod algebra is defined as the homotopy groups of the endomorphism spectrum of $\underline{\mathbb{F}}_p$, the Eilenberg--MacLane spectrum of the constant discrete $G$-Mackey functor on $\mathbb{F}_p$. Thus, once $R_G$ is understood, the computation of the $\mathbb Z$-graded $G$-equivariant Steenrod algebra reduces to a non-equivariant problem.

When $G=C_{p^n}$, we apply \cref{thm:intro_computation_of_RCpn} and completely compute the $\mathbb{Z}$-graded $C_{p^n}$-equivariant mod $p$ Steenrod algebra, including the multiplicative structure, generalizing Caruso's computation \cite{CARUSO_1999}.

\begin{thm}[{\cref{thm:steenrod_alg}}]\label[thm]{thm:intro_steenrod_alg}
    Let $\mathscr{A}^*_{C_{p^n}}$ be the $\mathbb{Z}$-graded $C_{p^n}$-equivariant mod $p$ Steenrod algebra. Then
    \begin{itemize}
        \item[(1)]  $\mathscr{A}^*_{C_{p^n}}$ is a square-zero extension of the subalgebra $\mathbb{F}_p\{1,\beta\}$ generated by the unit and the Bockstein class $\beta$, i.e., 
        $$\mathscr{A}^*_{C_{p^n}}\cong \mathbb{F}_p\{1,\beta\}\oplus I^*_{C_{p^n}},$$
        where $I^*_{C_{p^n}}\subseteq\mathscr{A}^*_{C_{p^n}}$ is a graded ideal with $(I^*_{C_{p^n}})^2=0$. Here, $|\beta|=1$.
        \item[(2)] As an $\mathbb{F}_p\{1,\beta\}$-bimodule, equivalently a left $\mathbb{F}_p\{1,\beta\}\otimes \mathbb{F}_p\{1,\beta\}$-module, $I^*_{C_{p^n}}$ is free, i.e., 
        $$I^*_{C_{p^n}}\cong \bigoplus_{k\in \mathbb{Z}} (\mathbb{F}_p\{1,\beta\}\otimes \mathbb{F}_p\{1,\beta\}[-k])^{\oplus r_k} $$
        for some nonnegative integers $r_k$. Here, we use the cohomological grading convention $(X[1])^*=X^{*+1}$.
        \item[(3)] $r_k=0$ for $k\leqslant 2$. For $k\geqslant 3$,
        $$r_k=n\cdot \left(\left\lfloor\frac{k-1}{2}\right\rfloor+\sum_{i=1}^{k-4}i\cdot m_{k-2-i}\right)-m_{k-1}$$
        where 
        $$m_l=\begin{cases}
            \#\{\text{Admissible }Sq^{i_1}Sq^{i_2}\cdots Sq^{i_s}\in\mathscr{A}^l: s\geqslant 1, i_1 \text{ even }, i_s>1\} & \text{ if }p=2;\\
            \#\{\text{Admissible }\beta^{\epsilon_0}P^{i_1}\beta^{\epsilon_1}\cdots P^{i_s}\beta^{\epsilon_s}\in\mathscr{A}^l: s\geqslant 1, \epsilon_0=\epsilon_s=0\} & \text{ if }p>2.
        \end{cases}$$
        Here, $\mathscr{A}^*$ denotes the non-equivariant mod $p$ Steenrod algebra.
    \end{itemize}
\end{thm}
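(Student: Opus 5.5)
The plan is to reduce everything to a non-equivariant computation via the Morita description. Since $\underline{\mathbb{F}}_p$ has constant underlying Mackey functor, $\underline{\mathbb{F}}_p\in\Sp_{C_{p^n}}^{\mathrm{const}}$. By \cref{thm:morita_description} it corresponds to a left $R:=R_{C_{p^n}}$-module $M$ with underlying spectrum $\underline{\mathbb{F}}_p^{C_{p^n}}\simeq H\mathbb{F}_p$. Fullness of the subcategory gives
\[\mathscr{A}^*_{C_{p^n}}\cong \pi_{-*}\mathrm{End}_{R}(H\mathbb{F}_p).\]
By \cref{thm:intro_computation_of_RCpn}, $R$ is free as an $\mathbb{E}_1$-algebra on the $\mathbb{E}_0$-algebra $\mathbb{S}\to C$, where $C=\mathrm{cof}(\tilde R\to\mathbb{S})$. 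Equivalently, $R$ is the free $\mathbb{E}_1$-algebra on the spectrum $\Sigma\tilde R$, so that $C\simeq\mathbb{S}\oplus\Sigma\tilde R$ as pointed spectra once we check that $\tilde R\to\mathbb{S}$ behaves as expected. Hence an $R$-module structure on $X$ is just a map $\Sigma\tilde R\otimes X\to X$. For the free $\mathbb{E}_1$-algebra $T(V)$ there is the standard cofiber sequence
\[ T(V)\otimes V\otimes T(V)\to T(V)\otimes T(V)\to T(V)\]
of bimodules. Applying $\mathrm{Hom}_{T(V)}(-\otimes_{T(V)}M,M)$ yields the fiber sequence
\[\mathrm{End}_R(M)\to \mathrm{End}(H\mathbb{F}_p)\xrightarrow{\ \delta\ } \mathrm{Map}(\Sigma\tilde R\otimes H\mathbb{F}_p,H\mathbb{F}_p).\]
Here $\delta(f)=a\circ(1\otimes f)-f\circ a$, with $a$ the action map. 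The whole problem is thus: identify $a$, then compute the kernel and cokernel of $\delta$ on homotopy in terms of the classical Steenrod algebra $\mathscr{A}^*$.

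The key steps, in order, are as follows.
\begin{enumerate}
\item Compute $H\mathbb{F}_p\otimes\tilde R$. This uses \cref{con:tilde_RCpn}, which presumably builds $\tilde R$ from orbit and fixed-point data such as $\Sigma^\infty_+BC_{p^i}$ and Tate-type pieces. The target is the $\mathscr{A}$-comodule $H_*(\tilde R;\mathbb{F}_p)$; I expect $n$ copies of essentially $H_*(BC_p)$-like pieces, one per adjacent pair of subgroups in the chain. This is where the factor $n$ in $r_k$ and the term $\lfloor (k-1)/2\rfloor$ come from.
\item Identify the action $a$ of $\Sigma\tilde R$ on $H\mathbb{F}_p$, i.e.\ a class in $H^*(\Sigma\tilde R)\otimes\mathscr{A}$-ish.
\item Compute $\delta$ on homotopy as a commutator-type map from $\mathscr{A}^*$ to $\mathrm{Hom}(H_*(\Sigma\tilde R),\mathscr{A}^*)$. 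The kernel of $\delta$ should be exactly $\mathbb{F}_p\{1,\beta\}$: an operation commuting with the twisting must be the unit or the Bockstein. The contribution of the cokernel, shifted by one, gives $I^*$.
\item The ring structure. A connecting-map class composed with another connecting-map class factors through a composite of two boundary maps, which vanishes. This gives $(I^*)^2=0$, and the decomposition in part (1) splits because $\mathbb{F}_p\{1,\beta\}$ lifts.
\item Freeness as a bimodule. The cokernel of $\delta$ is a module over $\mathscr{A}^*$ on both sides through pre- and post-composition. Restricted to $\mathbb{F}_p\{1,\beta\}\otimes\mathbb{F}_p\{1,\beta\}$, freeness should follow from the Milnor basis: $\mathscr{A}$ is free over $E(\beta)$ on both sides, compatibly, by the Adem relations with the admissibility restrictions $i_1$ even and $i_s>1$ (respectively $\epsilon_0=\epsilon_s=0$). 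This is why $m_l$ counts exactly the admissible monomials that are neither left- nor right-divisible by $\beta$.
\item Counting. Form the Poincaré series of the cokernel as (dimension of the target) minus (dimension of $\mathscr{A}$) plus (dimension of the kernel). Then divide by the Poincaré series $(1+t)^2$ of $E(\beta)\otimes E(\beta)$. The term $\sum_i i\,m_{k-2-i}$ should come from target summands $H_*(BC_p)$-type of dimension growing linearly, tensored against reduced $\beta$-free monomials. The subtracted $m_{k-1}$ comes from the image of $\delta$.
\end{enumerate}

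The main obstacle is steps 2--3: pinning down the action map $a$ concretely, and showing $\delta$ is as nondegenerate as claimed. I would first treat $n=1$, where $\tilde R$ should be close to $\Sigma^{-1}$ of the cofiber of the transfer $\Sigma^\infty_+BC_p\to\mathbb{S}$. Then $a$ is determined by the known restriction and transfer in $\underline{\mathbb{F}}_p$: restriction is the identity, and transfer is multiplication by $p=0$. I would check the formula against Caruso's dimensions, and then induct along the chain using \cref{thm:structure_of_genuine_spectra}, where each adjacent pair contributes an identical summand.
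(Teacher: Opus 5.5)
There is a genuine gap at the center of your plan. Everything downstream of your fiber sequence depends on one fact: that $\ker(\pi_*\delta)$, i.e.\ the image of $\mathscr{A}^*_{C_{p^n}}\to\mathscr{A}^*$, is exactly $\mathbb{F}_p\{1,\beta\}$. This feeds the splitting in (1), the fact that $I^*_{C_{p^n}}$ is the cokernel of an \emph{injective} map $(\mathscr{A}^*/\mathbb{F}_p\{1,\beta\})[-1]\to(H^*(\tilde{R}_{C_{p^n}})\otimes\mathscr{A}^*)[-2]$, and the term $-m_{k-1}$ in (3). You only assert it (``an operation commuting with the twisting must be the unit or the Bockstein'') and propose to prove it by identifying an action map $a$, which you leave as the main obstacle.

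Moreover, that map does not exist. The map $\tilde{R}_{C_{p^n}}\to\mathbb{S}$ is not null: otherwise every spectrum would lift to $\Sp^{\mathrm{const}}_{C_{p^n}}$, contradicting \cref{exmp:non-essentially-surjectivity}. Concretely, restricting the $i=1$ summand along the basepoint of the second tensor factor gives the Kahn--Priddy transfer $\Sigma^\infty BC_p\to\mathbb{S}$. So $\mathbb{S}\to\mathrm{cof}(\tilde{R}_{C_{p^n}}\to\mathbb{S})$ does not split, and $R_{C_{p^n}}$ is not the tensor algebra on $\Sigma\tilde{R}_{C_{p^n}}$. A module structure is a null-homotopy $\nu$ of $\tilde{R}_{C_{p^n}}\otimes X\to X$. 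Then $\delta(f)$ is the difference of the two null-homotopies $f\circ\nu$ and $\nu\circ(1\otimes f)$. This is a functional-operation type class that cannot be read off from ``restriction is the identity, transfer is $p=0$''.

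Your fiber sequence does survive. For $R=\mathrm{Free}^{\mathbb{E}_1}_{\mathbb{E}_0}(\mathbb{S}\to C)$ the cotangent-complex sequence $R\otimes\bar{C}\otimes R\to R\otimes R\to R$ holds with $\bar{C}=\mathrm{cof}(\mathbb{S}\to C)\simeq\Sigma\tilde{R}_{C_{p^n}}$, and this recovers \cref{lem:mapping_spectra}. The missing idea is that $\delta$ never has to be computed. The restriction functor $\Sp^{\mathrm{const}}_{C_{p^n}}\to\Sp^{\mathrm{const}}_{C_p}$ lies over $\Sp$, so the image is contained in the $C_p$-image, which is $\mathbb{F}_p\{1,\beta\}$ by \cite[Theorem 3]{CARUSO_1999}. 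Conversely, the functor $\Mod_{\mathbb{Z}}(\Sp)\to\Sp^{\mathrm{const}}_{C_{p^n}}$ of \cref{cor:cor_of_RCpn} realizes $\mathbb{F}_p\{1,\beta\}$ as a subring of $\mathscr{A}^*_{C_{p^n}}$ lifting it.

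Step 5 is also incomplete as stated. The cokernel of $\delta$ is not an $\mathscr{A}^*$-bimodule: $\delta$ is a derivation, $\delta(gf)=g\,\delta(f)+\delta(g)\circ(1\otimes f)$, hence bilinear only over $\ker\delta$. It is the subring coming from $\Mod_{\mathbb{Z}}$ that makes the long exact sequence one of $\mathbb{F}_p\{1,\beta\}$-bimodules. Freeness of $\mathscr{A}^*/\mathbb{F}_p\{1,\beta\}$ (your admissible-monomial remark; $\mathscr{A}^*$ itself is not a free bimodule) says nothing on its own about a quotient. You also need two further facts:
\begin{enumerate}
\item the target $H^*(\tilde{R}_{C_{p^n}})\otimes\mathscr{A}^*$ is free, where left multiplication by $\beta$ acts diagonally via the Cartan formula;
\item an injective map of free $\mathbb{F}_p\{1,\beta\}$-bimodules has free cokernel with additive ranks (\cref{lem:injection}).
\end{enumerate}
The first requires $H^*(\tilde{R}_{C_{p^n}})$ to be free over $\mathbb{F}_p\{1,\beta\}$. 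This comes from the identification $(\tilde{B}C_{p^i}/C_{p^{i-1}}\otimes BC_{p^i}/C_{p^{i-1}})_{hC_{p^n}/C_{p^i}}\simeq\tilde{B}C_{p^i}/C_{p^{i-1}}\otimes BC_{p^n}/C_{p^{i-1}}$ in \cref{lem:computation_tilde_RCpn}. Each summand is a tensor product of two classifying-space pieces. That is what gives $n\lfloor\frac{k+1}{2}\rfloor$ generators in degree $k$ and the linear growth you need; an ``$H_*(BC_p)$-like'' piece would not.

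On the positive side, your square-zero argument is sound once made precise, and it differs from the paper's. Any $y\in I^*_{C_{p^n}}$ kills $R\otimes M\to M$, so it factors as $h\circ\partial$ through $\partial\colon M\to\Sigma R\otimes\bar{C}\otimes M$. For $x\in I^*_{C_{p^n}}$, the composite $x\circ h$ is a map out of a free module whose restriction to generators is null, so $xy=0$. This ghost-lemma argument is more elementary than \cref{lem:E_1_trivial}. Your Poincar\'e-series count also agrees with the paper's rank computation once the kernel and freeness are established.
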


\begin{rem}
    We compare \cref{thm:intro_steenrod_alg} with Voevodsky's computation when $G=C_2$. In \cite{voevodsky2003reduced}, Voevodsky computed the $\mathbb{R}$-motivic Steenrod algebra, which, after taking Betti realization, provides a complete description of the $RO(C_2)$-graded $C_2$-equivariant Steenrod algebra. Unlike the $\mathbb{Z}$-graded one, the $RO(C_2)$-graded $C_2$-equivariant Steenrod algebra admits a highly nontrivial multiplicative structure.

    The reason why $(I^*_{C_{2}})^2=0$ is related to the fact that the multiplication on the negative cone of the $RO(C_2)$-graded coefficient ring is trivial. Indeed, let $\gamma$ be a $C_2$-equivariant Steenrod operation in integral degree $n \geqslant 2$. Then, by Voevodsky's computation, $\gamma$ is divisible by an element of the negative cone of the $RO(C_2)$-graded coefficient ring for degree reasons. Since the multiplication on the negative cone is trivial, we deduce that the multiplication on $I^*_{C_{2}}$ is also trivial.
\end{rem}

\medskip
\noindent\textbf{Notation and convention.}
In this paper, we assume the basic language of $\infty$-categories. Unless otherwise specified, all categories are understood to be $\infty$-categories.

To avoid size issues, we fix a choice of the small, large, and very large universes. Let $\Cat$ be the large category of all small categories, and $\widehat{\Cat}$ be the very large category of all large categories.

Moreover, let $\widehat{\EuScript{C}at}_{\mathrm{ex}}$ be the very large category of all large stable categories and exact functors. Let $\EuScript{P}r^{\mathrm{L}}$ be the very large category of presentable categories and left adjoints, and $\EuScript{P}r^{\mathrm{R}}$ be the very large category of presentable categories and right adjoints.

For $\EuScript{C}\in \widehat{\EuScript{C}at}$, we denote mapping anima in $\EuScript{C}$ by $\Map_{\EuScript{C}}(-,-)$. 

Similarly, for $\EuScript{C}\in \widehat{\EuScript{C}at}_{\mathrm{ex}}$, we denote mapping spectra in $\EuScript{C}$ by $F_{\EuScript{C}}(-,-)$.

Moreover, for $\EuScript{C}\in \CAlg(\EuScript{P}r^{\mathrm{L}})$, we denote internal Hom in $\EuScript{C}$ by $\Hom_{\EuScript{C}}(-,-)$.

In all three cases, we omit the subscripts in 
\[\Map_{\EuScript{C}}(-,-),\ F_{\EuScript{C}}(-,-),\ \text{and }\Hom_{\EuScript{C}}(-,-)\]
when it is clear from the context.

For a finite group $G$, we denote the category of $G$-orbits, naive $G$-spectra, and genuine $G$-spectra by $O_G$, $\Sp_G^{\mathrm{naive}}$, and $\Sp_G$, respectively.

When $G=C_{p^n}$ is a nontrivial cyclic $p$-group for some prime $p$, we denote by 
\[\mathscr{A}_{C_{p^n}}^*:=\pi_{-*}F_{\Sp_{C_{p^n}}}(\underline{\mathbb{F}}_p,\underline{\mathbb{F}}_p)\]
the $\mathbb{Z}$-graded $C_{p^n}$-equivariant mod $p$ Steenrod algebra. We also denote by $\mathscr{A}^*$ the non-equivariant mod $p$ Steenrod algebra if the prime $p$ is clear from the context.

\medskip
\noindent\textbf{Acknowledgments.}
The idea for this paper first emerged at IWoAT 2026 in Beijing, China, and the author is grateful to the organizers for creating a stimulating environment. The author would like to thank Yixuan Bao, Aowen Fan, Yifan Jin, Guchuan Li, Jikun Li, Yuxuan Li, Zhenpeng Li, Zixi Li, Weinan Lin, Yuchen Wu, Jinchen Zhao, and Foling Zou for helpful comments and discussions during the conference, and would also like to thank Mike Mandell for helpful discussions afterward.

\section{Constant spectral Mackey functors}\label{section:defn_of_constant_mackey_cunctor}

In this section, we fix a finite group $G$ and study constant spectral $G$-Mackey functors, beginning with their definition.

\begin{defn}\label[defn]{defn:constant_mackey_functor}
    Given a finite group $G$, we define the category $\Sp_G^{\mathrm{const}}$ of \emph{constant spectral $G$-Mackey functors} by the following pullback diagram
    \[\begin{tikzcd}
	{\Sp_G^{\mathrm{const}}} & {\Sp_G} \\
	{\Sp} & {\Sp_G^{\mathrm{naive}}\simeq \Fun(O_G^{\mathrm{op}},\Sp).}
	\arrow[from=1-1, to=1-2]
	\arrow[from=1-1, to=2-1]
	\arrow[from=1-2, to=2-2]
	\arrow[from=2-1, to=2-2]
    \end{tikzcd}\]
    Here, the right vertical arrow is the forgetful functor and the lower horizontal arrow is the constant diagram functor.
\end{defn}

\begin{rem}
    Since $O_G^{\mathrm{op}}$ has an initial object $G/G$, the pointwise formula for left Kan extensions \cite[Lemma 4.2.3.1]{HTT} identifies the constant diagram functor
    \[\Sp\simeq\Fun(\{G/G\},\Sp) \to \Fun(O_G^{\mathrm{op}},\Sp)\]
    with the left Kan extension along $\{G/G\}\hookrightarrow O_G^{\mathrm{op}}$. In particular, the constant diagram functor is fully faithful by \cite[Proposition 4.3.2.17]{HTT}.

    Therefore, as a pullback of a fully faithful functor, the natural functor 
    \[\Sp_G^{\mathrm{const}}\hookrightarrow\Sp_G\]
    is also fully faithful, with the essential image consisting precisely of those genuine $G$-spectra whose underlying naive $G$-spectra are constant as orbital presheaves.
\end{rem}

\begin{rem}
    By unwinding the definition, the essential image of \[\Sp_G^{\mathrm{const}}\hookrightarrow\Sp_G\]
    can equivalently be described as the full subcategory of $S$-local objects in $\Sp_G$ with respect to the following collection of morphisms 
    \[S=\{\Sigma^{\infty}_{+}G/H\to \Sigma^{\infty}_{+}G/K: G/H\to G/K\in O_G\}\]
    in the sense of \cite[Section 5.5.4]{HTT}.
\end{rem}

\begin{warn}
    The collection of $S$-equivalences in $\Sp_G$ is not closed under tensor products in the sense of \cite[Definition 2.2.1.6]{HA} whenever $G$ is nontrivial. To see this, assume, for contradiction, that the collection of $S$-equivalences is closed under tensor products. Since \[\Sigma^{\infty}_{+}G/e\to \Sigma^{\infty}_{+}G/G\] lies in $S$, tensoring it with $\Sigma^{\infty}_{+}G/e$ yields  \[\Sigma^{\infty}_{+}(G/e \times G/e)\to \Sigma^{\infty}_{+}G/e,\]
    which is therefore an $S$-equivalence under this assumption. This map is the fold map from a direct sum of $|G|$ copies of $\Sigma^\infty_+G/e$ onto $\Sigma^\infty_+G/e$. Since it is an $S$-equivalence by assumption and $|G|>1$, it follows that the $S$-localization of $\Sigma^\infty_+G/e$ is trivial. Moreover, for any subgroup $H\leq G$, the quotient map \[\Sigma^{\infty}_{+}G/e\to \Sigma^{\infty}_{+}G/H\] 
    lies in $S$, so all $G$-orbits $\Sigma^{\infty}_{+}G/H$ are trivial after $S$-localization. Since the $G$-orbits $\Sigma^{\infty}_{+}G/H$ generate $\Sp_G$ under colimits and desuspensions, it follows that the category $\Sp_G^{\mathrm{const}}$ is trivial. This contradicts the existence of nontrivial constant spectral $G$-Mackey functors, such as the Eilenberg--MacLane spectrum associated with the discrete constant $G$-Mackey functor on any nontrivial abelian group.

    Because of this, we do not expect $\Sp_G^{\mathrm{const}}$ to admit a well-behaved symmetric monoidal structure.
\end{warn}

We next discuss some examples and counterexamples of constant spectral $G$-Mackey functors. Since the category of discrete constant $G$-Mackey functors is equivalent to the category of abelian groups, one may expect a similar equivalence $\Sp_G^{\mathrm{const}}\simeq \Sp$ in the spectral context. This is not true in general. The following provides a counterexample.

\begin{exmp}\label[exmp]{exmp:non-essentially-surjectivity}
    Let $G=C_p$ be a cyclic group with prime order. By \cite[Theorem 3]{CARUSO_1999}, the image of the underlying map 
    \[\mathscr{A}^*_{C_{p}}\to \mathscr{A}^*\]
    is exactly the $\mathbb{F}_p$-vector space generated by the unit $1$ and the Bockstein class $\beta$. Therefore, since $\underline{\mathbb{F}}_p$ lies in $\Sp_{C_p}^{\mathrm{const}}$, the forgetful functor 
    \[\Sp_{C_p}^{\mathrm{const}}\to \Sp\]
    does not induce an equivalence from the endomorphism spectrum of $\underline{\mathbb{F}}_p$ in $\Sp_{C_p}^{\mathrm{const}}$ to the endomorphism spectrum of $\mathbb{F}_p$ in $\Sp$. 

    Based on Caruso's computation, we can further construct a spectrum that does not admit a lift to a constant spectral $C_p$-Mackey functor. To see this, let $\theta$ be a nontrivial element in $\mathscr{A}^*$ with degree greater than $1$, regarded as an endomorphism of $\mathbb{F}_p$, and consider the cofiber $C_\theta$ of $\theta$. We claim that $C_\theta$ does not admit a lift to a constant spectral $C_p$-Mackey functor. Suppose, for contradiction, that such a lift exists. The resulting constant spectral $C_p$-Mackey functor would have $\mathbb{Z}$-graded homotopy groups concentrated in two degrees, with values given by two copies of $\underline{\mathbb{F}}_p$. The corresponding extension class would determine a lift of $\theta$ to a $C_p$-equivariant Steenrod operation in $\mathscr{A}^*_{C_{p}}$. But $\theta$ does not belong to the image of  \[\mathscr{A}^*_{C_{p}}\to \mathscr{A}^*,\] contradicting Caruso's computation.

    In general, let $G\neq \{e\}$ be a nontrivial finite group. Then there exists a subgroup of $G$ that is isomorphic to $C_p$ for some prime $p$. The restriction to this subgroup gives a commutative diagram \[\begin{tikzcd}
	{\Sp_{G}^{\mathrm{const}}} && {\Sp_{C_p}^{\mathrm{const}}} \\
	& \Sp.
	\arrow["{\mathrm{res}}", from=1-1, to=1-3]
	\arrow[from=1-1, to=2-2]
	\arrow[from=1-3, to=2-2]
    \end{tikzcd}\]
    Therefore, the forgetful functor 
    \[\Sp_{G}^{\mathrm{const}}\to \Sp\]
    is not essentially surjective whenever $G$ is nontrivial.
\end{exmp}

On the other hand, many spectra do admit lifts to constant spectral Mackey functors. We describe two families of examples.

\begin{exmp}\label[exmp]{exmp:Z-linear_spectra}
    Let $\EuScript{A}b$ be the $1$-category of abelian groups and let $\EuScript{M}(G)$ be the $1$-category of discrete $G$-Mackey functors. Note that the discrete constant Mackey functor construction 
    \[\EuScript{A}b\to\EuScript{M}(G)\]
    is an exact functor between abelian categories, and thus induces a functor on derived categories
    \[\Mod_{\mathbb{Z}}(\Sp) \simeq\EuScript{D}(\EuScript{A}b)\to\Mod_{A(G)}(\Sp_G) \simeq\EuScript{D}(\EuScript{M}(G)).\]
    Here, $A(G)$ denotes the Eilenberg--MacLane spectrum of the $G$-Burnside Mackey functor in $\Sp_G$.

    By construction, the composition 
    \[\Mod_{\mathbb{Z}}(\Sp)\to\Mod_{A(G)}(\Sp_G) \xrightarrow{\mathrm{forgetful}}\Sp_G\]
    factors through the full subcategory 
    \[\Sp_G^{\mathrm{const}}\hookrightarrow \Sp_G.\]
    Therefore, we get a lift 
    \[\Mod_{\mathbb{Z}}(\Sp)\to\Sp_G^{\mathrm{const}}\]
    that is compatible with the forgetful functors to $\Sp$
    \[\begin{tikzcd}
	{\Mod_{\mathbb{Z}}(\Sp)} && {\Sp_G^{\mathrm{const}}} \\
	& {\Sp.}
	\arrow[dashed, from=1-1, to=1-3]
	\arrow[from=1-1, to=2-2]
	\arrow[from=1-3, to=2-2]
\end{tikzcd}\]
\end{exmp}

\begin{exmp}\label[exmp]{exmp:|G|-invertible_spectra}
    Let $|G|$ be the order of $G$ and consider the $|G|$-inverted sphere 
    \[\mathbb{S}[|G|^{-1}]:=\colim(\cdots\xrightarrow{|G|}\mathbb{S}\xrightarrow{|G|}\mathbb{S}\xrightarrow{|G|}\mathbb{S}\xrightarrow{|G|}\cdots)\in \Sp.\]
    Since $\mathbb{S}[|G|^{-1}]$ is an idempotent object in $\Sp$, \cite[Proposition 4.8.2.9]{HA} implies that it admits a unique $\mathbb{E}_{\infty}$ structure with unit the $|G|$-inversion map $\mathbb{S}\to\mathbb{S}[|G|^{-1}]$. Moreover, by \cite[Proposition 4.8.2.10]{HA}, the forgetful functor 
    \[\Mod_{\mathbb{S}[|G|^{-1}]}(\Sp)\hookrightarrow\Sp\]
    is fully faithful with essential image consisting of $|G|$-invertible spectra.

    Now consider the following composition
    \[\begin{tikzcd}
	{\Mod_{\mathbb{S}[|G|^{-1}]}(\Sp)} & {\Sp} & {\Sp^{BG}} & {\Sp_G.}
	\arrow[hook, from=1-1, to=1-2]
	\arrow["{\mathrm{trivial}}", from=1-2, to=1-3]
	\arrow["{\mathrm{cofree}}", hook, from=1-3, to=1-4]
    \end{tikzcd}\]
    We claim that the composition above factors through the full subcategory 
    \[\Sp_G^{\mathrm{const}}\hookrightarrow \Sp_G.\]
    To see this, note that for any $|G|$-invertible spectrum $X$ and any subgroup $H\leq G$, the $G/H$-level of the image of $X$ under the above composition functor is $X^{hH}$, the homotopy fixed point spectrum of $X$ with trivial $H$-action. Since $X$ is $|G|$-invertible, the homotopy $H$-fixed point spectral sequence collapses at the $E_2$ page because the higher group cohomology vanishes. Therefore, the canonical comparison map 
    \[X^{hH}\to X\simeq X^{he}\]
    is an equivalence, which implies that the essential image of the above composition lies in the full subcategory 
    \[\Sp_G^{\mathrm{const}}\hookrightarrow \Sp_G.\]

    In particular, we have constructed a functor 
    \[\Mod_{\mathbb{S}[|G|^{-1}]}(\Sp)\to\Sp_G^{\mathrm{const}}\]
    that is compatible with the forgetful functors to $\Sp$
    \[\begin{tikzcd}
	{\Mod_{\mathbb{S}[|G|^{-1}]}(\Sp)} && {\Sp_G^{\mathrm{const}}} \\
	& {\Sp.}
	\arrow[dashed, from=1-1, to=1-3]
	\arrow[from=1-1, to=2-2]
	\arrow[from=1-3, to=2-2]
\end{tikzcd}\]
\end{exmp}

In the two examples above, we have related the category $\Sp_G^{\mathrm{const}}$ to the categories of modules in spectra over $\mathbb{Z}$ and $\mathbb{S}[|G|^{-1}]$. We conclude this section with a Morita-theoretic description of $\Sp_G^{\mathrm{const}}$, identifying it with the category of modules $\Mod_{R_G}(\Sp)$ over a uniquely determined $\mathbb{E}_1$-ring spectrum $R_G$. In particular, the previous two examples can be translated as $\mathbb{E}_1$-maps from $R_G$ to $\mathbb{Z}$ and $\mathbb{S}[|G|^{-1}]$, respectively.

To begin with, recall the following consequence of the Barr-Beck-Lurie monadicity theorem \cite[Theorem 4.7.3.5]{HA}.

\begin{lem}\label[lem]{lem:monadicity}
    The functor 
    \[\begin{tikzcd}
	{\Alg_{\mathbb{E}_1}(\Sp)^{\mathrm{op}}} & {\widehat{\Cat}_{/\Sp}} \\[-2em]
	R & {(\Mod_{R}(\Sp)\to \Sp)}
	\arrow[hook, from=1-1, to=1-2]
	\arrow[maps to, from=2-1, to=2-2]
    \end{tikzcd}\]
    sending an $\mathbb{E}_1$-ring spectrum $R$ to the category of modules over $R$ with the forgetful functor to $\Sp$ is fully faithful. Moreover, the essential image consists of monadic right adjoints over $\Sp$ whose corresponding monads preserve all small colimits.
\end{lem}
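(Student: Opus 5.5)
The plan is to deduce both assertions from the Barr--Beck--Lurie theorem \cite[Theorem 4.7.3.5]{HA}, combined with the identification of colimit-preserving endofunctors of $\Sp$ with bimodules, equivalently $\mathbb{E}_1$-rings for monads. First, for an $\mathbb{E}_1$-ring $R$, the forgetful functor $U_R\colon\Mod_R(\Sp)\to\Sp$ has left adjoint $R\otimes-$. It is conservative and preserves all colimits, since colimits of modules are computed underlying. Hence it is monadic, and its monad $T_R=R\otimes-$ preserves small colimits. This gives the inclusion of the image into the class described in the statement.

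For full faithfulness, I will use the standard equivalence between monadic right adjoints over $\Sp$ and monads on $\Sp$. More precisely, the full subcategory of $\widehat{\Cat}_{/\Sp}$ spanned by monadic right adjoints is equivalent to $\Alg(\Fun(\Sp,\Sp))^{\mathrm{op}}$ via $T\mapsto \mathrm{LMod}_T(\Sp)$; this is \cite[Section 4.7.3]{HA}, or the endomorphism monad construction of \cite[Section 4.7.1]{HA}. Restricting to colimit-preserving monads, we use $\Fun^{\mathrm{L}}(\Sp,\Sp)\simeq\Sp$ as monoidal categories, with composition corresponding to $\otimes$, via $F\mapsto F(\mathbb{S})$. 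This holds because $\Sp$ is the unit of $\EuScript{P}r^{\mathrm{L}}_{\mathrm{st}}$, so $\Fun^{\mathrm{L}}(\Sp,\Sp)\simeq\Sp$ is an equivalence of $\mathbb{E}_1$-monoidal categories \cite[Proposition 4.8.2.18]{HA}. It then induces $\Alg(\Fun^{\mathrm{L}}(\Sp,\Sp))\simeq\Alg_{\mathbb{E}_1}(\Sp)$. We also check that under this correspondence $\mathrm{LMod}_{T_R}(\Sp)\simeq\Mod_R(\Sp)$ over $\Sp$, which is \cite[Corollary 4.2.4.8]{HA} or a direct comparison of the action of $\Sp$ on itself.

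Composing these equivalences shows that $R\mapsto(\Mod_R(\Sp)\to\Sp)$ is the composite of an equivalence onto colimit-preserving monads with a fully faithful embedding of monads into $\widehat{\Cat}_{/\Sp}$. The latter is fully faithful, as follows. For monadic $U\colon\EuScript{C}\to\Sp$ and $U'\colon\EuScript{D}\to\Sp$ with monads $T$ and $T'$, the space of functors $\EuScript{C}\to\EuScript{D}$ over $\Sp$ is equivalent to the space of monad maps $T'\to T$. This holds because maps over $\Sp$ into $\mathrm{LMod}_{T'}$ are the same as $T'$-module structures on $U$, which correspond to monad maps $T'\to\mathrm{End}(U)\simeq T$. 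This yields full faithfulness, and the essential image is exactly the set of monadic right adjoints with colimit-preserving monad.

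The main obstacle is the coherent, $\infty$-categorical form of the statement that functors over the base between categories of algebras over monads correspond to maps of monads. At the level of objects this is standard, but for a fully faithful statement one needs the endomorphism-monad universal property of \cite[Section 4.7.1]{HA}, namely that $T=\mathrm{End}(U)$ is a terminal object among algebras acting on $U$. I will invoke that universal property directly rather than reprove it. The remaining steps are formal.
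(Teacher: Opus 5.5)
Your proposal is correct and follows essentially the same route as the paper. Both arguments embed monads fully faithfully into $\widehat{\Cat}_{/\Sp}$ via Barr--Beck--Lurie, restrict to colimit-preserving monads, and then use the $\mathbb{E}_1$-monoidal equivalence $\Sp\simeq\mathrm{End}^{\mathrm{L}}(\Sp)$ from \cite[Proposition 4.8.2.18]{HA}; the paper just cites \cite[Remark 4.7.3.8]{HA} for the embedding you re-derive from the endomorphism-monad universal property, and it leaves your identification $\mathrm{LMod}_{R\otimes -}(\Sp)\simeq\Mod_R(\Sp)$ implicit.
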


\begin{proof}
    Let $\widehat{\mathrm{End}}(\Sp)$ be the large $\mathbb{E}_1$-monoidal category of endofunctors of $\Sp$. By \cite[Remark 4.7.3.8]{HA}, the functor 
    \[\begin{tikzcd}
	{\Alg_{\mathbb{E}_1}(\widehat{\mathrm{End}}(\Sp))^{\mathrm{op}}} & {\widehat{\Cat}_{/\Sp}} \\[-2em]
	T & {(\Mod_{T}(\Sp)\to \Sp)}
	\arrow[hook, from=1-1, to=1-2]
	\arrow[maps to, from=2-1, to=2-2]
    \end{tikzcd}\]
    sending a monad $T$ to the category of modules over $T$ is fully faithful, and the essential image consists of monadic right adjoints over $\Sp$. By restricting to the full $\mathbb{E}_1$-monoidal subcategory
    \[\mathrm{End}^{\mathrm{L}}(\Sp)\hookrightarrow\widehat{\mathrm{End}}(\Sp)\]
    of colimit-preserving endofunctors, it suffices to show that 
    \[\begin{tikzcd}
	{\Sp} & {\mathrm{End}^{\mathrm{L}}(\Sp)} \\[-2em]
	R & {(-\otimes R)}
	\arrow[hook, from=1-1, to=1-2]
	\arrow[maps to, from=2-1, to=2-2]
    \end{tikzcd}\]
    is an equivalence of $\mathbb{E}_1$-monoidal categories. This follows since $\mathrm{End}^{\mathrm{L}}(\Sp)$ is the internal Hom of $\Sp$ in $\EuScript{P}r^{\mathrm{L}}$ and $\Sp$ is an idempotent algebra in $\EuScript{P}r^{\mathrm{L}}$ with respect to Lurie's tensor product \cite[Proposition 4.8.2.18]{HA}.
\end{proof}

\begin{thm}\label[thm]{thm:morita_description}
    For any finite group $G$, there exists a unique $\mathbb{E}_1$-ring spectrum $R_G$ such that the category $\Sp_G^{\mathrm{const}}$ is equivalent to $\Mod_{R_G}(\Sp)$ as categories over $\Sp$. That is, 
    \[\begin{tikzcd}
	{\Mod_{R_G}(\Sp)} && {\Sp_G^{\mathrm{const}}} \\
	& \Sp.
	\arrow["\simeq", dashed, from=1-1, to=1-3]
	\arrow[from=1-1, to=2-2]
	\arrow[from=1-3, to=2-2]
\end{tikzcd}\]
\end{thm}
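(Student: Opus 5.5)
By \cref{lem:monadicity}, the plan is to show that the forgetful functor $U\colon\Sp_G^{\mathrm{const}}\to\Sp$, taking categorical $G$-fixed points (equivalently, evaluation at $G/G$), is a monadic right adjoint whose monad preserves all small colimits. Uniqueness of $R_G$ then comes for free: the functor $R\mapsto(\Mod_R(\Sp)\to\Sp)$ is fully faithful, so any two $\mathbb{E}_1$-rings with equivalent module categories over $\Sp$ are equivalent as $\mathbb{E}_1$-rings. Existence amounts to verifying the hypotheses of the Barr--Beck--Lurie theorem \cite[Theorem 4.7.3.5]{HA}, together with the colimit condition on the monad.

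\textbf{Step 1: $U$ preserves all small colimits and limits.} By \cref{defn:constant_mackey_functor}, $\Sp_G^{\mathrm{const}}$ is a pullback in $\widehat{\Cat}$. Each of the three functors $\Sp_G\to\Fun(O_G^{\mathrm{op}},\Sp)$, $\Sp\to\Fun(O_G^{\mathrm{op}},\Sp)$ and the identity-type evaluations preserves all small limits and colimits:
\begin{itemize}
\item the forgetful functor $\Sp_G\to\Fun(O_G^{\mathrm{op}},\Sp)$ does so because categorical fixed points $(-)^H$ are exact and preserve filtered colimits, hence all colimits in the stable setting, and they preserve limits as right adjoints;
\item the constant diagram functor does so because colimits and limits in functor categories are computed pointwise.
\end{itemize}
It follows that $\Sp_G^{\mathrm{const}}$ is closed under small limits and colimits in $\Sp_G$ (using full faithfulness from the remark after the definition). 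Moreover $U$, being the composite $\Sp_G^{\mathrm{const}}\hookrightarrow\Sp_G\xrightarrow{(-)^G}\Sp$, preserves them as well. Presentability of $\Sp_G^{\mathrm{const}}$ follows either from the description as $S$-local objects for a small set $S$ \cite[Section 5.5.4]{HTT}, or from the fact that a pullback of presentable categories along colimit- and limit-preserving accessible functors is presentable. The adjoint functor theorem then gives a left adjoint $F\dashv U$. The monad $UF$ is a composite of two colimit-preserving functors, so it preserves all small colimits.

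\textbf{Step 2: $U$ is conservative.} This is the step needing an actual argument, though it is short. Suppose $f\colon X\to Y$ in $\Sp_G^{\mathrm{const}}$ has $f^G$ an equivalence. Since $X$ and $Y$ are constant, the restriction maps $X^G\to X^H$ and $Y^G\to Y^H$ are equivalences for all $H$, and they are natural in $f$. Hence $f^H$ is an equivalence for every $H\le G$. Since the functors $(-)^H$ for $H\le G$ are jointly conservative on $\Sp_G$, because the orbits $\Sigma^\infty_+G/H$ generate, $f$ is an equivalence.

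\textbf{Step 3: conclusion.} Since $\Sp_G^{\mathrm{const}}$ admits all colimits and $U$ preserves them, $U$ in particular preserves $U$-split geometric realizations. Together with conservativity and the existence of $F$, the Barr--Beck--Lurie theorem shows $U$ is monadic. Combining this with Step 1 and \cref{lem:monadicity}, we obtain an $\mathbb{E}_1$-ring $R_G$, unique up to equivalence, and an equivalence $\Mod_{R_G}(\Sp)\simeq\Sp_G^{\mathrm{const}}$ over $\Sp$. Concretely one expects $R_G\simeq UF(\mathbb{S})$ with its endomorphism ring structure, that is, $R_G\simeq F_{\Sp_G^{\mathrm{const}}}(F\mathbb{S},F\mathbb{S})^{\mathrm{op}}$.

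The main potential obstacle is only bookkeeping in Step 1. One must check that closure under colimits in $\Sp_G$ really holds, i.e.\ that colimits in the pullback are computed in $\Sp_G$, which relies on the forgetful functor to naive spectra preserving colimits in the genuine setting. This is the point where the stability of $\Sp_G$ and the exactness of categorical fixed points are essential.
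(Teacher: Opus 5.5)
Your proposal is correct and follows the paper's proof: presentability and bicontinuity of $\Sp_G^{\mathrm{const}}\to\Sp$ come from the pullback in \cref{defn:constant_mackey_functor}, conservativity comes from that of $\Sp_G\to\Sp_G^{\mathrm{naive}}$ (your Step 2 simply unwinds the paper's remark that a pullback of a conservative functor is conservative), and then Barr--Beck--Lurie together with \cref{lem:monadicity} gives both existence and uniqueness. One small caveat: if you justify presentability via $S$-local objects, then $S$ must contain all shifts $\Sigma^{m}$, $m\in\mathbb{Z}$, of the maps $\Sigma^\infty_+G/H\to\Sigma^\infty_+G/K$, because locality in the sense of \cite[Section 5.5.4]{HTT} is tested on mapping anima, which only detect $\pi_{\geq 0}$ of $X^K\to X^H$; your alternative pullback argument avoids this issue.
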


\begin{proof}
    Recall the following pullback diagram from \cref{defn:constant_mackey_functor}
    \[\begin{tikzcd}
	{\Sp_G^{\mathrm{const}}} & {\Sp_G} \\
	{\Sp} & {\Sp_G^{\mathrm{naive}}\simeq \Fun(O_G^{\mathrm{op}},\Sp)}
	\arrow[from=1-1, to=1-2]
	\arrow[from=1-1, to=2-1]
	\arrow[from=1-2, to=2-2]
	\arrow[from=2-1, to=2-2]
    \end{tikzcd}\]
    Both the lower horizontal arrow and the right vertical arrow preserve all small limits and colimits. Since $\EuScript{P}r^{\mathrm{L}}$ and $\EuScript{P}r^{\mathrm{R}}$ detect small limits in $\widehat{\Cat}$, we deduce that $\Sp_G^{\mathrm{const}}$ is presentable and the forgetful functor 
    \[\Sp_G^{\mathrm{const}}\to \Sp\]
    preserves all small limits and colimits. Moreover, it is also conservative as a pullback of a conservative functor $\Sp_G\to\Sp_G^{\mathrm{naive}}$.
    Thus, $\Sp_G^{\mathrm{const}}\to \Sp$ is monadic by \cite[Theorem 4.7.3.5]{HA}. The corresponding monad preserves all small colimits since both the forgetful functor and its left adjoint are colimit-preserving. Therefore, the theorem follows from \cref{lem:monadicity}.
\end{proof}

\begin{rem}
    By \cref{lem:monadicity}, the two functors constructed in \cref{exmp:Z-linear_spectra,exmp:|G|-invertible_spectra} can equivalently be described as $\mathbb{E}_1$-maps from $R_G$ to $\mathbb{Z}$ and $\mathbb{S}[|G|^{-1}]$, respectively.
\end{rem}

\section{A reconstruction theorem for genuine \texorpdfstring{$C_{p^n}$}{}-spectra}

In the rest of the paper, we will focus on cyclic $p$-groups $G=C_{p^n}$ for a fixed prime $p$ and a positive integer $n$. The goal of this section is to prove a reconstruction theorem for genuine $C_{p^n}$-spectra (\cref{thm:structure_of_genuine_spectra}) which will be used in the following sections. 

Philosophically, for any finite group $G$, there is a strong parallel between 
\begin{itemize}
    \item[(1)] the relation between normed algebras and commutative algebras in $\Sp_G$,
    \item[(2)] and the relation between genuine $G$-spectra and naive $G$-spectra.
\end{itemize}
In the upcoming work \cite{normedalg}, Li and the author study the obstruction theory from commutative algebras to normed algebras in $\Sp_G$, and a similar argument also works in the additive context. In particular, a genuine $G$-spectrum can be obtained from its underlying naive $G$-spectrum by attaching finitely many structure maps and coherence data. 

In the case of $G=C_{p^n}$, we can explicitly write down the obstructions in \cref{cor:obstruction_for_Cpn}. The proof of \cref{cor:obstruction_for_Cpn} is based on two reconstruction theorems for naive $C_{p^n}$-spectra (\cref{lem:structure_of_naive_spectra}) and genuine $C_{p^n}$-spectra (\cref{thm:structure_of_genuine_spectra}). 

Intuitively, \cref{lem:structure_of_naive_spectra} indicates that a naive $C_{p^n}$-spectrum $X\in \Sp_{C_{p^n}}^{\mathrm{naive}}$ consists of the following data:
\begin{itemize}
    \item for $0\leqslant i \leqslant n$, a fixed point spectrum $X^{C_{p^i}}\in\Sp^{BC_{p^n}/C_{p^i}}$,
    \item for $1\leqslant i \leqslant n$, a Borel $C_{p^n}/C_{p^i}$-equivariant restriction map 
    \[X^{C_{p^i}}\to (X^{C_{p^{i-1}}})^{hC_{p^i}/C_{p^{i-1}}}\in \Sp^{BC_{p^n}/C_{p^i}}.\]
\end{itemize}

\begin{lem}\label[lem]{lem:structure_of_naive_spectra}
    The category $\Sp_{C_{p^n}}^{\mathrm{naive}}$ of naive $C_{p^n}$-spectra is equivalent to the limit of the following diagram 
    \[\begin{tikzcd}
	{\Sp^{BC_{p^n}}} &[-4em] &[-4em] {\Fun(\Delta^1,\Sp^{BC_{p^n}/C_{p}})} &[-5em] &[-2em] \cdots &[-2em] &[-5em] {\Fun(\Delta^1,\Sp^{BC_{p^n}/C_{p^n}})} \\
	& {\Sp^{BC_{p^n}/C_{p}}} && {\Sp^{BC_{p^n}/C_{p^2}}} && {\Sp^{BC_{p^n}/C_{p^n}}}
	\arrow[from=1-1, to=2-2]
	\arrow[from=1-3, to=2-2]
	\arrow[from=1-3, to=2-4]
	\arrow[from=1-5, to=2-4]
	\arrow[from=1-5, to=2-6]
	\arrow[from=1-7, to=2-6]
    \end{tikzcd}\]
    Here, $\Delta^1=\{0<1\}$ and the structure maps are given by 
    \begin{itemize}
        \item the homotopy fixed point functor
        \[(-)^{hC_p}:\Sp^{BC_{p^n}}\to\Sp^{BC_{p^n}/C_{p}},\]
        \item for $1\leqslant i\leqslant n$, the evaluation at $\{1\}\subseteq\Delta^1$
        \[\mathrm{ev}_1:\Fun(\Delta^1,\Sp^{BC_{p^n}/C_{p^i}})\to\Sp^{BC_{p^n}/C_{p^i}}, \]
        \item for $1\leqslant i\leqslant n-1$, the composition of the evaluation at $\{0\}\subseteq\Delta^1$ and the homotopy fixed point functor 
        \[\Fun(\Delta^1,\Sp^{BC_{p^n}/C_{p^i}})\xrightarrow{\mathrm{ev}_0(-)} \Sp^{BC_{p^n}/C_{p^i}}\xrightarrow{(-)^{hC_{p^{i+1}}/C_{p^{i}}}}\Sp^{BC_{p^n}/C_{p^{i+1}}}.\]
    \end{itemize}
\end{lem}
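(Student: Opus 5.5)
We will prove the lemma by induction on $n$, peeling off the top subgroup $C_{p^n}$ via a recollement-type decomposition of the orbit category. Write $O=O_{C_{p^n}}$. For a cyclic $p$-group the orbits are $C_{p^n}/C_{p^i}$ for $0\leqslant i\leqslant n$. There is a morphism $C_{p^n}/C_{p^i}\to C_{p^n}/C_{p^j}$ exactly when $i\leqslant j$, and $\mathrm{Aut}(C_{p^n}/C_{p^i})\cong C_{p^n}/C_{p^i}$. Let $O_{\leqslant k}\subseteq O$ be the full subcategory on the orbits with $i\leqslant k$. Since every morphism raises the stabilizer index, $O_{\leqslant k}^{\mathrm{op}}$ is a sieve-complementary (cosieve) piece, so a presheaf on $O_{\leqslant k}^{\mathrm{op}}$ extends to $O_{\leqslant k+1}^{\mathrm{op}}$ by data at the single new object.

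\textbf{Key step: a pullback for adding one object.} For a full subcategory $\EuScript{D}\subseteq\EuScript{E}$ whose complement is a single object $x$ that receives no maps from objects outside $\EuScript{D}\cup\{x\}$ in the relevant direction, there is a standard pullback
\[\Fun(\EuScript{E}^{\mathrm{op}},\Sp)\simeq \Fun(\EuScript{D}^{\mathrm{op}},\Sp)\times_{\Fun(B\mathrm{Aut}(x),\Sp)}\Fun(\Delta^1,\Fun(B\mathrm{Aut}(x),\Sp)).\]
It sends $F$ to the restriction $F|_{\EuScript{D}}$ together with the canonical map from $F(x)$ to the right Kan extension of $F|_{\EuScript{D}}$ evaluated at $x$ (with its $\mathrm{Aut}(x)$-action). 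The map $\Fun(\EuScript{D}^{\mathrm{op}},\Sp)\to\Fun(B\mathrm{Aut}(x),\Sp)$ is this Kan extension. The other leg is $\mathrm{ev}_1$. This is the usual recollement/gluing along a closed point: it follows from the fact that restriction to $\EuScript{D}$ admits a fully faithful right adjoint, and every functor is recovered as the pullback of itself restricted to $x$ and the unit map.

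We apply this with $\EuScript{E}=O_{\leqslant k+1}$, $\EuScript{D}=O_{\leqslant k}$, and $x=C_{p^n}/C_{p^{k+1}}$. The essential computation is that the right Kan extension, evaluated at $x$, is
\[\lim_{(x\to y)\in (x\downarrow \EuScript{D}^{\mathrm{op}})}F(y).\]
Here the comma category consists of maps $C_{p^n}/C_{p^i}\to x$ with $i\leqslant k$. This comma category has a cofinal (in the limit sense) subcategory given by the single object $C_{p^n}/C_{p^k}\to x$ together with its automorphisms over $x$, namely $C_{p^{k+1}}/C_{p^k}$. The reason is that any map from $C_{p^n}/C_{p^i}$ to $x$ factors through $C_{p^n}/C_{p^k}$. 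Hence the Kan extension is $(F(C_{p^n}/C_{p^k}))^{hC_{p^{k+1}}/C_{p^k}}$, with its residual $C_{p^n}/C_{p^{k+1}}$-action. This identification is the main obstacle. It must be done with care: the initiality argument should show that the inclusion $B(C_{p^{k+1}}/C_{p^k})\to (x\downarrow \EuScript{D}^{\mathrm{op}})$ is initial, and the residual action should be tracked equivariantly. To handle the equivariance we would phrase everything over $B(C_{p^n}/C_{p^{k+1}})$, or use the Borel description $\Fun(BC_{p^n}/C_{p^k},\Sp)^{hC_{p^{k+1}}/C_{p^k}}$.

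\textbf{Assembling the limit.} Iterating from $O_{\leqslant 0}^{\mathrm{op}}\simeq BC_{p^n}$ gives $\Fun(O^{\mathrm{op}},\Sp)$ as an iterated pullback. At each stage the new object's value is glued via $\mathrm{ev}_1$ to the homotopy fixed points of the previous top value. The previous top value is $\mathrm{ev}_0$ of the preceding arrow category, or the Borel spectrum itself when $k=0$. Moreover, the Kan extension of $F|_{O_{\leqslant k}}$ depends only on $F(C_{p^n}/C_{p^k})$ by the cofinality step. Hence each pullback only involves the last factor, and the iterated pullback reassembles into the zigzag limit in the statement. The structure maps are exactly $(-)^{hC_p}$, $\mathrm{ev}_1$, and $(-)^{hC_{p^{i+1}}/C_{p^i}}\circ\mathrm{ev}_0$.
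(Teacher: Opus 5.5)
Your proposal is correct, and it takes a genuinely different route from the paper. The paper never filters the orbit category. It builds the comparison functor $f$ from the units of right Kan extension along the two-object subcategories $O(i)$. It then observes that both $\Sp^{\mathrm{naive}}_{C_{p^n}}$ and the limit $\EuScript{D}$ are comonadic over $\prod_{i=0}^{n}\Sp^{BC_{p^n}/C_{p^i}}$ via evaluation at all orbits. Next it computes the right adjoint $R_1$ of the core restriction pointwise, as $\bigoplus_{j\leqslant i}X(C_{p^n}/C_{p^j})^{hC_{p^i}/C_{p^j}}$. A mapping-anima cancellation shows that $f\circ R_1$ is right adjoint to the evaluation functor out of $\EuScript{D}$, and the paper concludes by \cite[Corollary 4.7.3.16]{HA}.

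You instead build the equivalence by hand, as an $n$-fold gluing along the cosieves $O^{\mathrm{op}}_{\leqslant k}$. This avoids the presentability, conservativity and comonad bookkeeping, and it works for any target category with limits. Your only homotopical input is that the right Kan extension at the new orbit is homotopy fixed points for $C_{p^{k+1}}/C_{p^k}$. You isolate this and justify it by cofinality, whereas the paper invokes the corresponding pointwise Kan extension computations without detail. What the paper's route buys is uniformity. The same adjoint-comparison template is reused in \cref{thm:structure_of_genuine_spectra}, monadically over naive spectra via the tom Dieck splitting. There the indexing category has both restrictions and transfers, so no such sieve filtration is available to glue along.

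Three points should be tightened when you write this up.
\begin{enumerate}
\item The hypothesis of your gluing step should say that in $\EuScript{E}^{\mathrm{op}}$ there are no morphisms from objects of $\EuScript{D}$ to $x$. As written, ``outside $\EuScript{D}\cup\{x\}$'' is vacuous.
\item For initiality, existence of factorizations is not enough. You need the set of factorizations of a map $C_{p^n}/C_{p^i}\to x$ through the projection $C_{p^n}/C_{p^k}\to x$ to be a torsor under $C_{p^{k+1}}/C_{p^k}$. That is what makes the relevant slice categories contractible, and it does hold here.
\item The equivariance is handled most cleanly in two steps. First, produce the comparison $\mathrm{Ran}(F)(x)\to F(C_{p^n}/C_{p^k})^{hC_{p^{k+1}}/C_{p^k}}$ in $\Sp^{BC_{p^n}/C_{p^{k+1}}}$ as a Beck--Chevalley map. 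It comes from the natural transformation, given by the $C_{p^n}$-equivariant projection, between the two composites $B(C_{p^n}/C_{p^k})\to O^{\mathrm{op}}_{\leqslant k+1}$ through $C_{p^n}/C_{p^{k+1}}$ and through $C_{p^n}/C_{p^k}$. Then check that this map is an equivalence on underlying spectra, which is exactly your cofinality computation.
\end{enumerate}
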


\begin{proof}
    We will prove that both sides are comonadic over \[\Fun((O_{C_{p^n}}^{\mathrm{op}})^\simeq,\Sp)\simeq \displaystyle\prod_{i=0}^{n}\Fun(BC_{p^n}/C_{p^i},\Sp)\] and construct a comparison map that induces an equivalence on the comonads.
     
    We first construct a comparison functor from 
    \[\Sp_{C_{p^n}}^{\mathrm{naive}}\simeq\Fun(O_{C_{p^n}}^{\mathrm{op}},\Sp)\]
    to the limit $\EuScript{D}$ of the desired diagram. 

    For simplicity, we introduce a local notation and denote by $O(i)$ the full subcategory of $O_{C_{p^n}}^{\mathrm{op}}$ spanned by $C_{p^n}/C_{p^{i-1}}$ and $C_{p^n}/C_{p^{i}}$, for $1\leqslant i \leqslant n$. Moreover, we identify the full subcategory of $O_{C_{p^n}}^{\mathrm{op}}$ spanned by $C_{p^n}/C_{p^{i}}$ with $BC_{p^n}/C_{p^{i}}$. 

    Then consider the right Kan extension along $BC_{p^n}/C_{p^{i-1}} \to O(i)$, regarded as the right adjoint to the restriction functor 
    \[\Fun(O(i),\Sp)\to \Fun(BC_{p^n}/C_{p^{i-1}},\Sp).\]
    Consider the unit of this adjunction and restrict it to $BC_{p^n}/C_{p^{i}}$. We get a functor 
    \[\begin{tikzcd}
	{\Fun(O(i),\Sp)} & {\Fun(\Delta^1,\Sp^{BC_{p^n}/C_{p^{i}}})} \\[-2em]
	X & {X(C_{p^n}/C_{p^{i}})\to X(C_{p^n}/C_{p^{i-1}})^{hC_{p^i}/C_{p^{i-1}}}}
	\arrow[from=1-1, to=1-2]
	\arrow[maps to, from=2-1, to=2-2]
    \end{tikzcd}\]
    By precomposing with the restriction along $O(i)\to O_{C_{p^n}}^{\mathrm{op}}$, we get a functor 
    \[\Sp_{C_{p^n}}^{\mathrm{naive}}\to\Fun(\Delta^1,\Sp^{BC_{p^n}/C_{p^{i}}})\]

    Moreover, set 
    \[\Sp_{C_{p^n}}^{\mathrm{naive}}\to\Sp^{BC_{p^n}}\]
    to be the restriction along $BC_{p^n}\to O_{C_{p^n}}^{\mathrm{op}}$. Then the functors defined above fit into commutative diagrams
    \[\begin{tikzcd}
	{\Sp_{C_{p^n}}^{\mathrm{naive}}} & {\Fun(\Delta^1,\Sp^{BC_{p^n}/C_{p}})} \\
	{\Sp^{BC_{p^n}}} & {\Sp^{BC_{p^n}/C_{p}}}
	\arrow[from=1-1, to=1-2]
	\arrow[from=1-1, to=2-1]
	\arrow["{\mathrm{ev}_1}", from=1-2, to=2-2]
	\arrow["{(-)^{hC_p}}", from=2-1, to=2-2]
    \end{tikzcd}\]
    and 
    \[\begin{tikzcd}
	{\Sp_{C_{p^n}}^{\mathrm{naive}}} &[4em] {\Fun(\Delta^1,\Sp^{BC_{p^n}/C_{p^i}})} \\
	{\Fun(\Delta^1,\Sp^{BC_{p^n}/C_{p^{i-1}}})} & {\Sp^{BC_{p^n}/C_{p^{i}}}}
	\arrow[from=1-1, to=1-2]
	\arrow[from=1-1, to=2-1]
	\arrow["{\mathrm{ev}_1}", from=1-2, to=2-2]
	\arrow["{(\mathrm{ev}_0(-))^{hC_{p^i}/C_{p^{i-1}}}}", from=2-1, to=2-2]
    \end{tikzcd}\]
    for $1\leqslant i \leqslant n$. These altogether define a comparison functor 
    \[f:\Sp_{C_{p^n}}^{\mathrm{naive}}\to \EuScript{D}.\]
    
    Moreover, by construction, the comparison functor fits into a commutative diagram 
    \[\begin{tikzcd}
	{\Sp_{C_{p^n}}^{\mathrm{naive}}} &[-3em]&[-3em] {\EuScript{D}} \\
	& {\Fun((O_{C_{p^n}}^{\mathrm{op}})^\simeq,\Sp)\simeq \displaystyle\prod_{i=0}^{n}\Fun(BC_{p^n}/C_{p^i},\Sp)}
	\arrow["f", from=1-1, to=1-3]
	\arrow["L_1"', from=1-1, to=2-2]
	\arrow["L_2", from=1-3, to=2-2]
    \end{tikzcd}\]
    where $L_1$ is the restriction along $(O_{C_{p^n}}^{\mathrm{op}})^\simeq\to O_{C_{p^n}}^{\mathrm{op}}$ and $L_2$ is a product of 
    \[\EuScript{D}\to \Sp^{BC_{p^n}}\]
    and
    \[\EuScript{D}\to \Fun(\Delta^1,\Sp^{BC_{p^n}/C_{p^i}}) \xrightarrow{\mathrm{ev}_0} \Sp^{BC_{p^n}/C_{p^i}}\]
    for $1\leqslant i \leqslant n$.

    $L_1$ is a limit-preserving, colimit-preserving, and conservative functor between presentable categories, so $L_1$ is comonadic. Let $R_1$ be the right adjoint to $L_1$. We claim that $f \circ R_1$ is a right adjoint to $L_2$.

    To see this, for any $$X\in \Fun((O_{C_{p^n}}^{\mathrm{op}})^\simeq,\Sp)$$ and $Y\in\EuScript{D}$, we compute the mapping anima 
    \[\Map_{\EuScript{D}}(Y,f\circ R_1(X)).\]

    $\EuScript{D}$ is defined as a limit, so the mapping anima in $\EuScript{D}$ is equivalent to the limit of the mapping anima in each component. Consider the mapping anima in 
    \[\Fun(\Delta^1,\Sp^{BC_{p^n}/C_{p^i}}).\]
    By the pointwise formula of the right Kan extension $R_1$, the corresponding component is 
    \[\Map\left(\begin{tikzcd}
	{L_2(Y)(C_{p^n}/C_{p^i})} \\
	{L_2(Y)(C_{p^n}/C_{p^{i-1}})^{hC_{p^i}/C_{p^{i-1}}}}
	\arrow[from=1-1, to=2-1]
    \end{tikzcd},\begin{tikzcd}
	{\displaystyle\bigoplus_{j=0}^{i} X(C_{p^n}/C_{p^{j}})^{hC_{p^i}/C_{p^{j}}}} \\
	{\displaystyle\bigoplus_{j=0}^{i-1} X(C_{p^n}/C_{p^{j}})^{hC_{p^i}/C_{p^{j}}}}
	\arrow[from=1-1, to=2-1]
    \end{tikzcd}\right)\]

    Moreover, consider the adjunction between the restriction and the right Kan extension along $\{0,1\}\subseteq \Delta^1$. The mapping anima above is equivalent to the product of
    \[\Map_{\Sp^{BC_{p^n}/C_{p^i}}}\left(L_2(Y)(C_{p^n}/C_{p^i}), X(C_{p^n}/C_{p^{i}})\right)\] and \[ \Map_{\Sp^{BC_{p^n}/C_{p^i}}}\left(L_2(Y)(C_{p^n}/C_{p^{i-1}})^{hC_{p^i}/C_{p^{i-1}}},\displaystyle\bigoplus_{j=0}^{i-1} X(C_{p^n}/C_{p^{j}})^{hC_{p^i}/C_{p^{j}}}\right).\]

    By plugging this into the limit diagram for $\Map_{\EuScript{D}}(Y,f\circ R_1(X))$,
    the last term above is canceled by the corresponding component in $\Sp^{BC_{p^n}/C_{p^i}}$, so the mapping anima $\Map_{\EuScript{D}}(Y,f\circ R_1(X))$ is equivalent to 
    \begin{align*}
    &\displaystyle\prod_{i=0}^{n}\Map_{\Sp^{BC_{p^n}/C_{p^i}}}\left(L_2(Y)(C_{p^n}/C_{p^i}), X(C_{p^n}/C_{p^{i}})\right)\\
    \simeq\ & \Map_{\displaystyle\prod_{i=0}^{n}\Fun(BC_{p^n}/C_{p^i},\Sp)}(L_2(Y), X)
    \end{align*}
    This proves that $f\circ R_1$ is right adjoint to $L_2$.

    Since the homotopy fixed point functor preserves small limits and $\EuScript{P}r^{\mathrm{R}}$ detects small limits in $\widehat{\EuScript{C}at}$, $\EuScript{D}$ is presentable and $L_2$ preserves limits. Furthermore, $L_2$ is conservative by definition and is colimit-preserving since it admits a right adjoint, so $L_2$ is comonadic. Therefore, \[f:\Sp_{C_{p^n}}^{\mathrm{naive}}\to \EuScript{D}\] induces an equivalence on the corresponding comonads, and thus is itself an equivalence by \cite[Corollary 4.7.3.16]{HA}. This completes the proof.
\end{proof}

A similar reconstruction theorem holds for genuine $C_{p^n}$-spectra. Intuitively, \cref{thm:structure_of_genuine_spectra} indicates that a genuine $C_{p^n}$-spectrum $X\in \Sp_{C_{p^n}}$ consists of the following data:
\begin{itemize}
    \item for $0\leqslant i \leqslant n$, a categorical fixed point spectrum $X^{C_{p^i}}\in\Sp^{BC_{p^n}/C_{p^i}}$,
    \item for $1\leqslant i \leqslant n$, a Borel $C_{p^n}/C_{p^i}$-equivariant diagram
    \[\begin{tikzcd}
	{X^{C_{p^{i-1}}}_{hC_{p^i}/C_{p^{i-1}}}} && {X^{C_{p^{i}}}} \\
	& {(X^{C_{p^{i-1}}})^{hC_{p^i}/C_{p^{i-1}}}}
	\arrow["tr", dashed, from=1-1, to=1-3]
	\arrow["norm"', from=1-1, to=2-2]
	\arrow["res", dashed, from=1-3, to=2-2]
    \end{tikzcd}\in \Sp^{BC_{p^n}/C_{p^i}}\]
    of the restriction, transfer, and additive norm maps.
\end{itemize}

\begin{thm}\label[thm]{thm:structure_of_genuine_spectra}
    The category $\Sp_{C_{p^n}}$ of genuine $C_{p^n}$-spectra is equivalent to the limit of the following diagram 
    \[\begin{tikzcd}
	{\Sp^{BC_{p^n}}} &[-6em]& [-7em] {\Fun(\Delta^2,\Sp^{BC_{p^n}/C_{p}})} &[-7em]& [-3em]\cdots &[-3em]& [-8em] {\Fun(\Delta^2,\Sp^{BC_{p^n}/C_{p^n}})} \\
	& {\Fun(\Delta^1,\Sp^{BC_{p^n}/C_{p}})} && {\Fun(\Delta^1,\Sp^{BC_{p^n}/C_{p^2}})} && {\Fun(\Delta^1,\Sp^{BC_{p^n}/C_{p^n}})}
	\arrow[from=1-1, to=2-2]
	\arrow[from=1-3, to=2-2]
	\arrow[from=1-3, to=2-4]
	\arrow[from=1-5, to=2-4]
	\arrow[from=1-5, to=2-6]
	\arrow[from=1-7, to=2-6]
    \end{tikzcd}\]
    Here, $\Delta^2=\{0<1<2\}$ and the structure maps are given by 
    \begin{itemize}
        \item the additive norm functor
        \[(-)_{hC_p}\to(-)^{hC_p}:\Sp^{BC_{p^n}}\to\Fun(\Delta^1,\Sp^{BC_{p^n}/C_{p}}),\]
        \item for $1\leqslant i\leqslant n$, the evaluation at $\{0<2\}\subseteq\Delta^2$
        \[\mathrm{ev}_{0<2}:\Fun(\Delta^2,\Sp^{BC_{p^n}/C_{p^i}})\to\Fun(\Delta^1,\Sp^{BC_{p^n}/C_{p^i}}), \]
        \item for $1\leqslant i\leqslant n-1$, the composition
        \[\Fun(\Delta^2,\Sp^{BC_{p^n}/C_{p^i}})\to \Sp^{BC_{p^n}/C_{p^i}}\to\Fun(\Delta^1,\Sp^{BC_{p^n}/C_{p^{i+1}}})\]
        of the evaluation at $\{1\}\subseteq\Delta^2$
        \[\mathrm{ev}_1(-):\Fun(\Delta^2,\Sp^{BC_{p^n}/C_{p^i}})\to \Sp^{BC_{p^n}/C_{p^i}}\]
        and the additive norm functor
        \[(-)_{hC_{p^{i+1}}/C_{p^{i}}}\to(-)^{hC_{p^{i+1}}/C_{p^{i}}}:\Sp^{BC_{p^n}/C_{p^i}}\to\Fun(\Delta^1,\Sp^{BC_{p^n}/C_{p^{i+1}}}). \]
    \end{itemize}
\end{thm}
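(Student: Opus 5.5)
We follow the strategy of \cref{lem:structure_of_naive_spectra}: we show that both $\Sp_{C_{p^n}}$ and the limit $\EuScript{E}$ of the displayed diagram are comonadic over
\[\Fun((O_{C_{p^n}}^{\mathrm{op}})^\simeq,\Sp)\simeq\prod_{i=0}^n\Sp^{BC_{p^n}/C_{p^i}}.\]
We then build a comparison functor $g:\Sp_{C_{p^n}}\to\EuScript{E}$ over this base and check that it induces an equivalence of comonads. The conclusion then follows from \cite[Corollary 4.7.3.16]{HA}.

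\textbf{Step 1: the comparison functor.} We model $\Sp_{C_{p^n}}$ as spectral Mackey functors, i.e.\ additive functors $\mathrm{Span}(\mathrm{Fin}_{C_{p^n}})\to\Sp$. For each $i$, restrict to the full subcategory on $C_{p^n}/C_{p^{i-1}}$ and $C_{p^n}/C_{p^i}$. On $X^{C_{p^{i-1}}}$, the Borel $C_{p^n}/C_{p^{i-1}}$-action induces the transfer
\[X^{C_{p^{i-1}}}_{hC_{p^i}/C_{p^{i-1}}}\to X^{C_{p^i}},\]
as the counit of a left Kan extension. The restriction
\[X^{C_{p^i}}\to (X^{C_{p^{i-1}}})^{hC_{p^i}/C_{p^{i-1}}}\]
arises as the unit of a right Kan extension, exactly as in \cref{lem:structure_of_naive_spectra}. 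The double coset formula identifies the composite $\mathrm{res}\circ\mathrm{tr}$ with the additive norm map. This identification is canonical, since both sides are determined by the span $C_{p^n}/C_{p^{i-1}}\leftarrow C_{p^n}/C_{p^{i-1}}\times_{C_{p^n}/C_{p^i}}C_{p^n}/C_{p^{i-1}}\to C_{p^n}/C_{p^{i-1}}$. Together these give a functor $\Sp_{C_{p^n}}\to\Fun(\Delta^2,\Sp^{BC_{p^n}/C_{p^i}})$, with $\mathrm{ev}_{0<2}$ the norm and $\mathrm{ev}_1=X^{C_{p^i}}$. These functors are compatible with the structure maps of the diagram, so they assemble into $g$.

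\textbf{Step 2: comonadicity.} The forgetful functor $L_1:\Sp_{C_{p^n}}\to\prod_i\Sp^{BC_{p^n}/C_{p^i}}$ (taking all categorical fixed points) is conservative and preserves limits and colimits, so it is comonadic. The norm functors are exact and preserve colimits, and evaluations do as well, so $\EuScript{E}$ is presentable. The forgetful functor $L_2:\EuScript{E}\to\prod_i\Sp^{BC_{p^n}/C_{p^i}}$ evaluates at $\Sp^{BC_{p^n}}$ and at the vertex $1\in\Delta^2$ of each $\Fun(\Delta^2,-)$. It is conservative because the remaining vertices are norms or fixed points of earlier coordinates, hence determined. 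It preserves limits and colimits, so it is comonadic.

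\textbf{Step 3: comparison of comonads.} Let $R_1$ be the right adjoint of $L_1$. As in the lemma, we show that $g\circ R_1$ is right adjoint to $L_2$ by computing $\Map_{\EuScript{E}}(Y,gR_1X)$ as a limit of componentwise mapping anima. This is the main obstacle. The right adjoint $R_1$ sends $X$ to a Mackey functor whose $C_{p^n}/C_{p^i}$-value is $\bigoplus_{j\le i}X(C_{p^n}/C_{p^j})^{hC_{p^i}/C_{p^j}}$ plus transfer-induced terms; we would first try to compute it via coinduction along $\mathrm{Span}$ of the groupoid core.

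Each component $\Fun(\Delta^2,-)$ must then be decomposed. Mapping into a $\Delta^2$-diagram whose $0$ and $2$ vertices are determined by the previous stages should reduce, via the right Kan extension along $\{1\}\subseteq\Delta^2$ relative to the fixed edge $0\to2$, to $\Map(Y^{C_{p^i}},X(C_{p^n}/C_{p^i}))$ times terms that cancel against the $\Fun(\Delta^1,-)$ components in the limit. We would verify this cancellation as follows. The value of $R_1X$ at level $i$ should split as $X(C_{p^n}/C_{p^i})$ plus the pullback of the norm and fixed-point data from lower levels. This is a direct-sum decomposition using stability and the explicit form of the double coset formula for a chain of subgroups.

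Once this holds, the unit and counit match, so $g$ induces an equivalence of comonads and is therefore an equivalence.
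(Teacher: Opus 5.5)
There is a genuine gap, in two places.

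First, Step 2 rests on a false claim. The additive norm functor $(-)_{hC_p}\to(-)^{hC_p}$ is exact, but it preserves neither colimits nor limits:
\begin{itemize}
\item $(-)^{hC_p}$ does not commute with infinite direct sums (take $\bigoplus_{k\geqslant 0}\Sigma^kH\mathbb{F}_p$ with trivial action);
\item $(-)_{hC_p}$ does not commute with infinite products.
\end{itemize}
So the diagram lives in $\widehat{\EuScript{C}at}_{\mathrm{ex}}$ but not in $\EuScript{P}r^{\mathrm{L}}$ or $\EuScript{P}r^{\mathrm{R}}$. You therefore cannot conclude componentwise that $\EuScript{E}$ is presentable, or that $L_2$ preserves limits and colimits.

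This is exactly where the genuine case differs from the naive lemma, whose transition functors are right adjoints. The paper flags this point explicitly and constructs colimits in the limit category by hand. It uses two facts: homotopy orbits preserve colimits, and the mapping spectra of the limit differ from those of $\Sp^{\mathrm{naive}}_{C_{p^n}}$ only by terms $\Omega F(X^{C_{p^{i-1}}}_{hC_{p^i}/C_{p^{i-1}}},-)$.

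In your comonadic setup the repair would be different. You would have to build the right adjoint of $L_2$ directly. You would also have to check that $L_2$-split totalizations exist and are computed componentwise; this is true, since every transition functor preserves split totalizations. Neither step appears in your argument.

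Second, Step 3, which you rightly call the main obstacle, is not carried out, and the shape you predict for $R_1$ is wrong. The right adjoint of $Z\mapsto (Z^{C_{p^i}})_i$ on genuine spectra receives contributions from higher levels, not only lower ones. Already for $n=1$, the tom Dieck splittings $(\Sigma^\infty_+C_p)^{C_p}\simeq\mathbb{S}$ and $\mathbb{S}^{C_p}\simeq\mathbb{S}\oplus\Sigma^\infty_+BC_p$ give
\[
R_1(X_0,X_1)^e\simeq X_0\oplus X_1,\qquad R_1(X_0,X_1)^{C_p}\simeq X_0^{hC_p}\oplus X_1\oplus F(\Sigma^\infty_+BC_p,X_1).
\]
So the value at level $i$ does not split as $X(C_{p^n}/C_{p^i})$ plus data from lower levels. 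The cancellation pattern you copy from the naive lemma does not go through as stated; you would have to track these extra summands and the transfers between them.

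The paper sidesteps this by changing the base: it shows both sides are monadic over $\Sp^{\mathrm{naive}}_{C_{p^n}}$. There the left adjoint of the forgetful functor is given by the tom Dieck splitting,
\[
(L'_1X)^{C_{p^i}}\simeq\bigoplus_{j\leqslant i}(X^{C_{p^j}})_{hC_{p^i}/C_{p^j}},
\]
so only lower levels appear. The mapping-space computation then collapses after restricting along $\Lambda^2_2\subseteq\Delta^2$, using that the induced map from $X^{C_{p^i}}$ into the restriction target of $L'_1X$ factors through the summand $(X^{C_{p^{i-1}}})^{hC_{p^i}/C_{p^{i-1}}}$.

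Finally, your Step 1 identification of $\mathrm{res}\circ\mathrm{tr}$ with the additive norm, as a natural transformation of $\Sp^{BC_{p^n}/C_{p^i}}$-valued functors, does not follow from ``both are determined by the span.'' It is a genuine input, for which the paper cites Quigley--Shah.
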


\begin{proof}
    The proof is analogous to that of \cref{lem:structure_of_naive_spectra}. We will prove that both sides are monadic over $\Sp_{C_{p^n}}^{\mathrm{naive}}$ and construct a comparison map that induces an equivalence on the monads.

    We first slightly modify the limit diagram in \cref{lem:structure_of_naive_spectra}. Let 
    \[\Lambda_2^2:= \left\{ \begin{tikzcd}
	0 &[-1em] &[-1em] 1 \\
	& 2
	\arrow[from=1-1, to=2-2]
	\arrow[from=1-3, to=2-2]
    \end{tikzcd}\right\}\to\Delta^2\]
    be the second horn in $\Delta^2$. By the formula of pushouts along fully faithful functors \cite[Theorem 0.1]{haine2025fullyfaithfulfunctorspushouts}, we have the following pushout diagram of categories 
    \[\begin{tikzcd}
	{\Delta^0\simeq\{2\}} & {\Delta^{1}\simeq \{0<2\}} \\
	{\Delta^{1}\simeq\{1<2\}} & {\Lambda_2^2.}
	\arrow[from=1-1, to=1-2]
	\arrow[from=1-1, to=2-1]
	\arrow[from=1-2, to=2-2]
	\arrow[from=2-1, to=2-2]
    \end{tikzcd}\]
    This induces a pullback diagram for $1\leqslant i \leqslant n$:
    \[\begin{tikzcd}
	{\Fun(\Lambda_2^2,\Sp^{BC_{p^n}/C_{p^i}})} & {\Fun(\Delta^{1},\Sp^{BC_{p^n}/C_{p^i}})} \\
	{\Fun(\Delta^{1},\Sp^{BC_{p^n}/C_{p^i}})} & {\Sp^{BC_{p^n}/C_{p^i}}.}
	\arrow[from=1-1, to=1-2]
	\arrow[from=1-1, to=2-1]
	\arrow[from=1-2, to=2-2]
	\arrow[from=2-1, to=2-2]
    \end{tikzcd}\]

    Since the homotopy fixed point functor factors through as 
    \[\Sp^{BC_{p^n}/C_{p^{i-1}}}\xrightarrow{(-)_{hC_{p^i}/C_{p^{i-1}}}\to(-)^{hC_{p^i}/C_{p^{i-1}}}}\Fun(\Delta^{1},\Sp^{BC_{p^n}/C_{p^i}})\xrightarrow{\mathrm{ev}_1} \Sp^{BC_{p^n}/C_{p^i}},\]
    by \cref{lem:structure_of_naive_spectra}, the category $ \Sp_{C_{p^n}}^{\mathrm{naive}}$ is equivalent to the limit of 
    \[\begin{tikzcd}
	{\Sp^{BC_{p^n}}} &[-6em]& [-7em] {\Fun(\Lambda_2^2,\Sp^{BC_{p^n}/C_{p}})} &[-7em]& [-3em]\cdots &[-3em]& [-8em] {\Fun(\Lambda_2^2,\Sp^{BC_{p^n}/C_{p^n}})} \\
	& {\Fun(\Delta^1,\Sp^{BC_{p^n}/C_{p}})} && {\Fun(\Delta^1,\Sp^{BC_{p^n}/C_{p^2}})} && {\Fun(\Delta^1,\Sp^{BC_{p^n}/C_{p^n}})}
	\arrow[from=1-1, to=2-2]
	\arrow[from=1-3, to=2-2]
	\arrow[from=1-3, to=2-4]
	\arrow[from=1-5, to=2-4]
	\arrow[from=1-5, to=2-6]
	\arrow[from=1-7, to=2-6]
    \end{tikzcd}\]
    The structure maps are analogous to those in the statement of \cref{thm:structure_of_genuine_spectra}. 
    
    In particular, let $\EuScript{D}'$ be the limit in \cref{thm:structure_of_genuine_spectra}. Then there is a natural forgetful functor 
    \[R'_2: \EuScript{D}'\to \Sp_{C_{p^n}}^{\mathrm{naive}}\]
    induced by the restrictions 
    \[\Fun(\Delta^2,\Sp^{BC_{p^n}/C_{p^i}})\to \Fun(\Lambda_2^2,\Sp^{BC_{p^n}/C_{p^i}}).\]

    Now we define a comparison map 
    \[f':\Sp_{C_{p^n}}\to \EuScript{D}'\]
    that is compatible with the forgetful functor to $\Sp_{C_{p^n}}^{\mathrm{naive}}$. That is, $f'$ fits into a commutative diagram 
    \[\begin{tikzcd}
	{\Sp_{C_{p^n}}} &[-3em]&[-3em] {\EuScript{D}'} \\
	& {\Sp_{C_{p^n}}^{\mathrm{naive}}}
	\arrow["f'", from=1-1, to=1-3]
	\arrow["R'_1"', from=1-1, to=2-2]
	\arrow["R'_2", from=1-3, to=2-2]
    \end{tikzcd}\]
    Here $R'_1$ denotes the forgetful functor.

    $R'_1$ is a limit-preserving, colimit-preserving, and conservative functor between presentable categories, so it is monadic. Let $L'_1$ be the left adjoint to $R'_1$. By the tom Dieck splitting \cite[section V.11]{gaunce2006equivariant}, taking categorical $C_{p^i}$-fixed points of the counit of the adjunction yields a natural transformation 
    \[\displaystyle\bigoplus_{j=0}^{i}(-)^{C_{p^j}}_{hC_{p^i}/C_{p^j}} \to (-)^{C_{p^i}}: \Sp_{C_{p^n}}\to \Sp^{BC_{p^n}/C_{p^i}}\] 

    Restricting to the $(i-1)$-th summand and postcomposing with the restriction map constructed in the proof of \cref{lem:structure_of_naive_spectra}, we get a functor 
    \[\Sp_{C_{p^n}}\to \Fun(\Delta^2,\Sp^{BC_{p^n}/C_{p^i}})\]
    sending $X\in \Sp_{C_{p^n}}$ to 
    \[\begin{tikzcd}
	    {X^{C_{p^{i-1}}}_{hC_{p^i}/C_{p^{i-1}}}} &[-3em] &[-3em] {X^{C_{p^{i}}}} \\
	    & {(X^{C_{p^{i-1}}})^{hC_{p^i}/C_{p^{i-1}}}.} &
	    \arrow[from=1-1, to=1-3]
        \arrow[from=1-1, to=2-2]
        \arrow[from=1-3, to=2-2]
    \end{tikzcd} \]
    Here, the composition \[X^{C_{p^{i-1}}}_{hC_{p^i}/C_{p^{i-1}}}\to (X^{C_{p^{i-1}}})^{hC_{p^i}/C_{p^{i-1}}}\]
    coincides with the additive norm by \cite[Theorem 4.29, Remarks 4.30 and 4.32]{quigley2025parameterized}.

    Together with the underlying Borel functor 
    \[\Sp_{C_{p^n}}\to \Sp^{BC_{p^n}},\]
    the functors defined above fit into commutative diagrams 
    \[\begin{tikzcd}
	{\Sp_{C_{p^n}}} &[4em] {\Fun(\Delta^2,\Sp^{BC_{p^n}/C_{p}})} \\
	{\Sp^{BC_{p^n}}} & {\Fun(\Delta^1,\Sp^{BC_{p^n}/C_{p}})}
	\arrow[from=1-1, to=1-2]
	\arrow[from=1-1, to=2-1]
	\arrow["{\mathrm{ev}_{0<2}}", from=1-2, to=2-2]
	\arrow["{(-)_{hC_p}\to(-)^{hC_p}}", from=2-1, to=2-2]
    \end{tikzcd}\]
    and 
    \[\begin{tikzcd}
	{\Sp_{C_{p^n}}} &[12.5em] {\Fun(\Delta^2,\Sp^{BC_{p^n}/C_{p^i}})} \\
	{\Fun(\Delta^2,\Sp^{BC_{p^n}/C_{p^{i-1}}})} & {\Fun(\Delta^1,\Sp^{BC_{p^n}/C_{p^{i}}})}
	\arrow[from=1-1, to=1-2]
	\arrow[from=1-1, to=2-1]
	\arrow["{\mathrm{ev}_{0<2}}", from=1-2, to=2-2]
	\arrow["{(\mathrm{ev}_1(-))_{hC_{p^i}/C_{p^{i-1}}}\to (\mathrm{ev}_1(-))^{hC_{p^i}/C_{p^{i-1}}}}", from=2-1, to=2-2]
    \end{tikzcd}\]
    for $1\leqslant i \leqslant n$. These assemble to a functor $f':\Sp_{C_{p^n}}\to \EuScript{D}'$ that is compatible with the forgetful functor to $\Sp_{C_{p^n}}^{\mathrm{naive}}$.

    We now prove that $f'\circ L'_1$ is left adjoint to $R'_2$ by computing the mapping anima in $\EuScript{D}'$. For $X\in \Sp_{C_{p^n}}^{\mathrm{naive}}$ and $Y\in \EuScript{D}'$, the mapping anima 
    \[\Map_{\EuScript{D}'}(f'\circ L'_1(X),Y)\]
    is computed as the limit of the mapping anima in each component. 

    To simplify the notation, we denote $(R'_2Y)^{C_{p^i}}$ by $Y^{C_{p^i}}$ for $0\leqslant i \leqslant n$.

    For $1\leqslant i \leqslant n$, by the tom Dieck splitting, the component in 
    \[\Fun(\Delta^2,\Sp^{BC_{p^n}/C_{p^i}})\]
    is the mapping anima from 
    \[\begin{tikzcd}
	    {\displaystyle\bigoplus_{j=0}^{i-1}X^{C_{p^{j}}}_{hC_{p^i}/C_{p^{j}}}} &[-3em] &[-3em] {\displaystyle\bigoplus_{j=0}^{i}X^{C_{p^{j}}}_{hC_{p^i}/C_{p^{j}}}} \\
	    & {\displaystyle\bigoplus_{j=0}^{i-1}(X^{C_{p^{j}}}_{hC_{p^{i-1}}/C_{p^{j}}}})^{hC_{p^i}/C_{p^{i-1}}} &
	    \arrow[from=1-1, to=1-3]
        \arrow[from=1-1, to=2-2]
        \arrow[from=1-3, to=2-2]
    \end{tikzcd} \]
    to 
    \[\begin{tikzcd}
	    {Y^{C_{p^{i-1}}}_{hC_{p^i}/C_{p^{i-1}}}} &[-3em] &[-3em] {Y^{C_{p^{i}}}} \\
	    & {(Y^{C_{p^{i-1}}})^{hC_{p^i}/C_{p^{i-1}}}.} &
	    \arrow[from=1-1, to=1-3]
        \arrow[from=1-1, to=2-2]
        \arrow[from=1-3, to=2-2]
    \end{tikzcd} \]

    Applying the adjunction between the restriction and left Kan extension along $\Lambda_2^2\to \Delta^2$, we identify the mapping anima above with the mapping anima in 
    \[\Fun(\Lambda_2^2,\Sp^{BC_{p^n}/C_{p^i}})\]
    from 
    \[\begin{tikzcd}
	    {\displaystyle\bigoplus_{j=0}^{i-1}X^{C_{p^{j}}}_{hC_{p^i}/C_{p^{j}}}} &[-5em] &[-5em] {X^{C_{p^{i}}}} \\
	    & {\displaystyle\bigoplus_{j=0}^{i-1}(X^{C_{p^{j}}}_{hC_{p^{i-1}}/C_{p^{j}}}})^{hC_{p^i}/C_{p^{i-1}}} &
        \arrow[from=1-1, to=2-2]
        \arrow[from=1-3, to=2-2]
    \end{tikzcd} \]
    to 
    \[\begin{tikzcd}
	    {Y^{C_{p^{i-1}}}_{hC_{p^i}/C_{p^{i-1}}}} &[-5em] &[-5em] {Y^{C_{p^{i}}}} \\
	    & {(Y^{C_{p^{i-1}}})^{hC_{p^i}/C_{p^{i-1}}}.} &
        \arrow[from=1-1, to=2-2]
        \arrow[from=1-3, to=2-2]
    \end{tikzcd} \]
    
    Moreover, note that the unit 
    \[\mathrm{id}\to R'_1L'_1\simeq R'_2f'L'_1: \Sp_{C_{p^n}}^{\mathrm{naive}}\to\Sp_{C_{p^n}}^{\mathrm{naive}}\]
    of the adjunction induces a commutative diagram 
    \[\begin{tikzcd}
	{X^{C_{p^i}}} & {\displaystyle\bigoplus_{j=0}^{i}X^{C_{p^{j}}}_{hC_{p^i}/C_{p^{j}}}} \\
	{(X^{C_{p^{i-1}}})^{hC_{p^{i}}/C_{p^{i-1}}}} & {{\displaystyle\bigoplus_{j=0}^{i-1}(X^{C_{p^{j}}}_{hC_{p^{i-1}}/C_{p^{j}}}})^{hC_{p^i}/C_{p^{i-1}}}}
	\arrow[from=1-1, to=1-2]
	\arrow[from=1-1, to=2-1]
	\arrow[from=1-2, to=2-2]
	\arrow[from=2-1, to=2-2]
    \end{tikzcd}\]
    which implies that the structure map
    \[X^{C_{p^{i}}}\to {\displaystyle\bigoplus_{j=0}^{i-1}(X^{C_{p^{j}}}_{hC_{p^{i-1}}/C_{p^{j}}}})^{hC_{p^i}/C_{p^{i-1}}} \]
    factors through the last term $(X^{C_{p^{i-1}}})^{hC_{p^{i}}/C_{p^{i-1}}}$.

    We further replace $\Lambda_2^2$ by the pushout $\{0<2\}\sqcup_{\{2\}}\{1<2\}$. Then the mapping anima above is equivalent to a pullback of mapping anima in 
    \[\Fun(\{0<2\},\Sp^{BC_{p^n}/C_{p^i}})\]
    and 
    \[\Fun(\{1<2\},\Sp^{BC_{p^n}/C_{p^i}})\]
    over 
    \[\Fun(\{2\},\Sp^{BC_{p^n}/C_{p^i}}).\]
    Moreover, the $\{0<2\}$ term is canceled by the 
    \[\Fun(\Delta^1,\Sp^{BC_{p^n}/C_{p^i}})\]
    term in $\EuScript{D}'$.

    Now consider the $\Fun(\{1<2\},\Sp^{BC_{p^n}/C_{p^i}})$ term. This mapping anima is equivalent to the pullback of 
    \[\Map_{\Sp^{BC_{p^n}/C_{p^i}}}(X^{C_{p^i}},Y^{C_{p^i}})\]
    and 
    \[\Map_{\Sp^{BC_{p^n}/C_{p^i}}}(\displaystyle\bigoplus_{j=0}^{i-1}(X^{C_{p^{j}}}_{hC_{p^{i-1}}/C_{p^{j}}})^{hC_{p^i}/C_{p^{i-1}}},(Y^{C_{p^{i-1}}})^{hC_{p^i}/C_{p^{i-1}}})\]
    over 
    \[\Map_{\Sp^{BC_{p^n}/C_{p^i}}}(X^{C_{p^i}},(Y^{C_{p^{i-1}}})^{hC_{p^i}/C_{p^{i-1}}}).\]
    The second term above is canceled by the $\Fun(\{2\},\Sp^{BC_{p^n}/C_{p^i}})$ term.

    Thus far, we have identified the mapping anima
    \[\Map_{\EuScript{D}'}(f'\circ L'_1(X),Y)\]
    as the limit of 
    \[\begin{tikzcd}
	{\Map(X^e,Y^e)} &[-6em] & [-6em] {\Map(X^{C_p},Y^{C_p})} &[-6em] &[-5em] \cdots & & [-5em] {\Map(X^{C_{p^n}},Y^{C_{p^n}})} \\
	& {\Map(X^{C_p},(Y^e)^{hC_p})} && {\Map(X^{C_{p^2}},(Y^{C_p})^{hC_{p^2}/C_{p}})} && \cdots
	\arrow[from=1-1, to=2-2]
	\arrow[from=1-3, to=2-2]
	\arrow[from=1-3, to=2-4]
	\arrow[from=1-5, to=2-4]
    \arrow[from=1-5, to=2-6]
	\arrow[from=1-7, to=2-6]
    \end{tikzcd}\]
    Here we omit the subscript of $\Map$ for simplicity. All mapping anima are computed in certain Borel equivariant categories $\Sp^{BC_{p^n}/C_{p^i}}$ for some $0\leqslant i\leqslant n$ as determined by the context. In this description, we implicitly use the fact that the structure map
    \[X^{C_{p^{i}}}\to {\displaystyle\bigoplus_{j=0}^{i-1}(X^{C_{p^{j}}}_{hC_{p^{i-1}}/C_{p^{j}}}})^{hC_{p^i}/C_{p^{i-1}}} \]
    factors through the last term.

    We also compute the mapping anima $\Map_{\Sp_{C_{p^n}}^{\mathrm{naive}}}(X,R'_2(Y))$ as a limit as in \cref{lem:structure_of_naive_spectra}. The component of the mapping anima in 
    \[\Fun(\Delta^1,\Sp^{BC_{p^n}/C_{p^i}})\]
    is a pullback of 
    \[\Map_{\Sp^{BC_{p^n}/C_{p^i}}}(X^{C_{p^i}},Y^{C_{p^i}})\]
    and 
    \[\Map_{\Sp^{BC_{p^n}/C_{p^i}}}((X^{C_{p^{i-1}}})^{hC_{p^{i}}/C_{p^{i-1}}},(Y^{C_{p^{i-1}}})^{hC_{p^{i}}/C_{p^{i-1}}})\]
    over 
    \[\Map_{\Sp^{BC_{p^n}/C_{p^i}}}(X^{C_{p^i}},(Y^{C_{p^{i-1}}})^{hC_{p^{i}}/C_{p^{i-1}}}),\]
    where the second term is canceled by the component in 
    \[\Sp^{BC_{p^n}/C_{p^i}}.\]
    Therefore, 
    \[\Map_{\Sp_{C_{p^n}}^{\mathrm{naive}}}(X,R'_2(Y))\]
    is equivalent to the same limit as 
    \[\Map_{\EuScript{D}'}(f'\circ L'_1(X),Y),\]
    which proves that $f'\circ L'_1$ is left adjoint to $R'_2$.

    It remains to show that 
    \[R'_2:\EuScript{D}'\to \Sp_{C_{p^n}}^{\mathrm{naive}}\]
    is monadic. $R'_2$ is conservative by construction. However, since the additive norm functor
    \[(-)_{hC_p}\to(-)^{hC_p}:\Sp^{BC_{p^n}}\to\Fun(\Delta^1,\Sp^{BC_{p^n}/C_{p}})\]
    is neither limit-preserving nor colimit-preserving, we cannot deduce a priori that $\EuScript{D}'$ is presentable and $R'_2$ preserves colimits.

    Nevertheless, the additive norm functor is exact. Since $\widehat{\EuScript{C}at}_{\mathrm{ex}}$ detects limits in $\widehat{\EuScript{C}at}$, $\EuScript{D}'$ is stable and the structure functor from $\EuScript{D}'$ to each component in the limit diagram is exact.

    We prove that $\EuScript{D}'$ admits all small colimits and $R'_2$ preserves them by constructing the colimits explicitly. 

    By \cref{lem:structure_of_naive_spectra}, we can simplify the definition of $\EuScript{D}'$ as a pullback 
    \[\begin{tikzcd}
	{\EuScript{D}'} & {\displaystyle\prod_{i=1}^{n}\Fun(\Delta^2,\Sp^{BC_{p^n}/C_{p^i}})} \\
	{\Sp_{C_{p^n}}^{\mathrm{naive}}} & {\displaystyle\prod_{i=1}^{n}\Fun(\Lambda_2^2,\Sp^{BC_{p^n}/C_{p^i}}).}
	\arrow[from=1-1, to=1-2]
	\arrow["{R'_2}"', from=1-1, to=2-1]
	\arrow[from=1-2, to=2-2]
	\arrow[from=2-1, to=2-2]
    \end{tikzcd}\]

    For any indexing category $\EuScript{K}\in\Cat$ and any diagram $p: \EuScript{K}\to \EuScript{D}'$, we extend $R'_2\circ p$ to a colimit diagram 
    \[\overline{R'_2\circ p}: \EuScript{K}^{\triangleright}\to \Sp_{C_{p^n}}^{\mathrm{naive}}\]
    and for each $1\leqslant i \leqslant n$, we lift 
    \[\EuScript{K}^{\triangleright}\xrightarrow{\overline{R'_2\circ p}} \Sp_{C_{p^n}}^{\mathrm{naive}}\to \Fun(\Lambda_2^2,\Sp^{BC_{p^n}/C_{p^i}})\]
    to 
    \[\Fun(\Delta^2,\Sp^{BC_{p^n}/C_{p^i}})\]
    by sending the apex to 
    \[\begin{tikzcd}
	{\colim_{x\in \EuScript{K}} p(x)^{C_{p^{i-1}}}_{hC_{p^{i}}/C_{p^{i-1}}}} &[-5em] &[-5em] {\colim_{x\in \EuScript{K}} p(x)^{C_{p^{i}}}} \\
	& {(\colim_{x\in \EuScript{K}} p(x)^{C_{p^{i-1}}})^{hC_{p^{i}}/C_{p^{i-1}}}.}
	\arrow[dashed, from=1-1, to=1-3]
	\arrow[from=1-1, to=2-2]
	\arrow[from=1-3, to=2-2]
    \end{tikzcd}\]
    The dotted arrow is the canonical map induced by the universal property of the colimit 
    \[\colim_{x\in \EuScript{K}} p(x)^{C_{p^{i-1}}}_{hC_{p^{i}}/C_{p^{i-1}}}.\]
    Here we use the fact that the homotopy orbit functor preserves colimits. These assemble to a diagram 
    \[\bar{p}: \EuScript{K}^{\triangleright}\to \EuScript{D}'.\]
    We claim that $\bar{p}$ is a colimit diagram.

    Let $\infty\in\EuScript{K}^{\triangleright}$ be the apex. We prove this by showing that, for any $Y\in \EuScript{D}'$, the natural map 
    \[F_{\EuScript{D}'}(\bar{p}(\infty),Y)\to \lim_{x\in \EuScript{K}} F_{\EuScript{D}'}(\bar{p}(x),Y)\in\Sp\]
    on mapping spectra is an equivalence.

    For any stable category $\EuScript{C}$, consider the restriction functor 
    \[\Fun(\Delta^2,\EuScript{C})\to \Fun(\Lambda_2^2,\EuScript{C}).\]
    Note that for any two objects $S,T\in \Fun(\Delta^2,\EuScript{C})$, since $\Delta^2$ fits into a pushout diagram 
    \[\begin{tikzcd}
	{\Delta^0\simeq\{1\}} & {\Delta^{1}\simeq \{1<2\}} \\
	{\Delta^{1}\simeq\{0<1\}} & {\Delta^2,}
	\arrow[from=1-1, to=1-2]
	\arrow[from=1-1, to=2-1]
	\arrow[from=1-2, to=2-2]
	\arrow[from=2-1, to=2-2]
    \end{tikzcd}\]
    the map induced by the restriction on mapping spectra from $S$ to $T$ is 
    \[\begin{tikzcd}
	{F_{\EuScript{C}}(S(0),T(0))\displaystyle\bigtimes_{F_{\EuScript{C}}(S(0),T(1))}\left(F_{\EuScript{C}}(S(1),T(1))\displaystyle\bigtimes_{F_{\EuScript{C}}(S(1),T(2))}F_{\EuScript{C}}(S(2),T(2))\right)} \\
	{F_{\EuScript{C}}(S(0),T(0))\displaystyle\bigtimes_{F_{\EuScript{C}}(S(0),T(2))}\left(F_{\EuScript{C}}(S(1),T(1))\displaystyle\bigtimes_{F_{\EuScript{C}}(S(1),T(2))}F_{\EuScript{C}}(S(2),T(2))\right)}
	\arrow[from=1-1, to=2-1]
    \end{tikzcd}\]
    This is induced by 
    \[F_{\EuScript{C}}(S(0),T(1))\to F_{\EuScript{C}}(S(0),T(2))\]
    by postcomposing the structure map $T(1)\to T(2)$. 
    Therefore, the fiber of the induced map on mapping spectra is equivalent to 
    \[\Omega F_{\EuScript{C}}(S(0),\mathrm{fib}(T(1)\to T(2))).\]

    Applying this to the pullback diagram defining $\EuScript{D}'$, we set 
    \[F_iY:=\mathrm{fib}(Y^{C_{p^{i}}}\to(Y^{C_{p^{i-1}}})^{hC_{p^{i}}/C_{p^{i-1}}})\in \Sp^{BC_{p^n}/C_{p^i}}\]
    Then for any $X,Y\in \EuScript{D}'$, we get a fiber sequence of spectra
    \[\begin{tikzcd}
	{\displaystyle\prod_{i=1}^{n}\Omega F(X^{C_{p^{i-1}}}_{hC_{p^{i}}/C_{p^{i-1}}},F_iY)} & {F(X,Y)} & {F(R'_2X,R'_2Y).}
	\arrow[from=1-1, to=1-2]
	\arrow[from=1-2, to=1-3]
    \end{tikzcd}\]

    This fiber sequence is functorial on $X$. Therefore, we obtain a map of fiber sequences 
    \[\begin{tikzcd}
	{\displaystyle\prod_{i=1}^{n}\Omega F(R'_2\bar{p}(\infty)^{C_{p^{i-1}}}_{hC_{p^{i}}/C_{p^{i-1}}},F_iY)} & {\lim_{x\in \EuScript{K}}\displaystyle\prod_{i=1}^{n}\Omega F(R'_2\bar{p}(x)^{C_{p^{i-1}}}_{hC_{p^{i}}/C_{p^{i-1}}},F_iY)} \\
	{F(\bar{p}(\infty),Y)} & {\lim_{x\in \EuScript{K}}F(\bar{p}(x),Y)} \\
	{F(R'_2\bar{p}(\infty),R'_2Y)} & {\lim_{x\in \EuScript{K}}F(R'_2\bar{p}(x),R'_2Y).}
	\arrow[from=1-1, to=1-2]
	\arrow[from=1-1, to=2-1]
	\arrow[from=1-2, to=2-2]
	\arrow[from=2-1, to=2-2]
	\arrow[from=2-1, to=3-1]
	\arrow[from=2-2, to=3-2]
	\arrow[from=3-1, to=3-2]
    \end{tikzcd}\]
    The lower horizontal map is an equivalence by construction, and the upper horizontal map is an equivalence since the homotopy orbit functor preserves colimits. Therefore, the middle horizontal map is also an equivalence. This implies that $\bar{p}$ is a colimit diagram. Thus, $\EuScript{D}'$ admits all small colimits and $R'_2$ preserves colimits by construction. 

    Finally, by the Barr-Beck-Lurie monadicity theorem \cite[Theorem 4.7.3.5]{HA}, $R'_2$ is monadic  and the theorem follows by \cite[Corollary 4.7.3.16]{HA}.
\end{proof} 

\begin{cor}\label[cor]{cor:obstruction_for_Cpn}
    The forgetful functor from genuine $C_{p^n}$-spectra to naive $C_{p^n}$-spectra fits into a pullback diagram
    \[\begin{tikzcd}
	{\Sp_{C_{p^n}}} & {\displaystyle\prod_{i=1}^{n}\Fun(\Delta^2,\Sp^{BC_{p^n}/C_{p^i}})} \\
	{\Sp_{C_{p^n}}^{\mathrm{naive}}} & {\displaystyle\prod_{i=1}^{n}\Fun(\Lambda_2^2,\Sp^{BC_{p^n}/C_{p^i}}).}
	\arrow[from=1-1, to=1-2]
	\arrow[from=1-1, to=2-1]
	\arrow[from=1-2, to=2-2]
  	\arrow[from=2-1, to=2-2]
    \end{tikzcd}\]
    Here, the right vertical map is the restriction along $\Lambda_2^2\to \Delta^2$ and the lower horizontal map sends $X\in \Sp_{C_{p^n}}^{\mathrm{naive}}$ to 
    \[\left(\begin{tikzcd}
	    {X^{C_{p^{i-1}}}_{hC_{p^i}/C_{p^{i-1}}}} &[-3em] &[-3em] {X^{C_{p^{i}}}} \\
	    & {(X^{C_{p^{i-1}}})^{hC_{p^i}/C_{p^{i-1}}}} &
        \arrow[from=1-1, to=2-2]
        \arrow[from=1-3, to=2-2]
    \end{tikzcd}\right)_{1\leqslant i \leqslant n}. \]
\end{cor}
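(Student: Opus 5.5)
This corollary should follow formally from \cref{thm:structure_of_genuine_spectra} and \cref{lem:structure_of_naive_spectra}, by comparing two limit presentations and using pasting of pullbacks. The plan is to show that the limit $\EuScript{D}'$ from \cref{thm:structure_of_genuine_spectra}, with the forgetful functor $R'_2$, fits into the stated square. Under the identification $f'$, the functor $R'_2$ corresponds to the forgetful functor $\Sp_{C_{p^n}}\to\Sp_{C_{p^n}}^{\mathrm{naive}}$, since $f'$ was built compatibly with $R'_1$ and $R'_2$.

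First, I rewrite $\Sp_{C_{p^n}}^{\mathrm{naive}}$ as the modified limit $\EuScript{D}''$ from the proof of \cref{thm:structure_of_genuine_spectra}. This is the same zigzag, but with $\Fun(\Lambda_2^2,\Sp^{BC_{p^n}/C_{p^i}})$ in place of $\Fun(\Delta^2,\Sp^{BC_{p^n}/C_{p^i}})$. That identification was already justified there. It uses two facts:
\begin{itemize}
\item the pushout decomposition $\Lambda_2^2\simeq\{0<2\}\sqcup_{\{2\}}\{1<2\}$;
\item the factorization of $(-)^{hC_{p^i}/C_{p^{i-1}}}$ through the additive norm followed by $\mathrm{ev}_1$.
\end{itemize}
The two limit diagrams $\EuScript{D}'$ and $\EuScript{D}''$ have the same shape. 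The restriction maps $\Fun(\Delta^2,-)\to\Fun(\Lambda_2^2,-)$, together with identities on the other terms, give a natural transformation between them. This transformation commutes with the structure maps, because $\mathrm{ev}_{0<2}$ and $\mathrm{ev}_1$ factor through $\Lambda_2^2\subseteq\Delta^2$. It therefore induces $R'_2\colon\EuScript{D}'\to\EuScript{D}''$.

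The key step is a general fact about limits. Suppose a transformation of diagrams is an equivalence on all vertices except some set $S$ of vertices that only map out (here, the top-row vertices $\Fun(\Delta^2,-)$). Then the square relating the two limits and $\prod_{s\in S}$ of the vertex categories is a pullback. To see this, write each limit as an iterated pullback. Namely, the limit is the fiber product of $\prod_{s}F(s)$ with the limit of the rest of the diagram, taken over $\prod$ of the targets of the maps out of $S$. Apply this to both diagrams. Since the ``rest'' and the targets agree, pasting of pullback squares gives
\[\EuScript{D}'\simeq \EuScript{D}''\times_{\prod_i\Fun(\Lambda_2^2,\Sp^{BC_{p^n}/C_{p^i}})}\prod_i\Fun(\Delta^2,\Sp^{BC_{p^n}/C_{p^i}}).\]
This holds because in both cases the maps out of the $i$-th top vertex factor through the restriction to $\Lambda_2^2$.

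Finally, I identify the bottom horizontal map $\Sp_{C_{p^n}}^{\mathrm{naive}}\simeq\EuScript{D}''\to\prod_i\Fun(\Lambda_2^2,\Sp^{BC_{p^n}/C_{p^i}})$ with the projection. By construction it sends $X$ to the horns $X^{C_{p^{i-1}}}_{hC_{p^i}/C_{p^{i-1}}}\to(X^{C_{p^{i-1}}})^{hC_{p^i}/C_{p^{i-1}}}\leftarrow X^{C_{p^i}}$. These are the additive norm and the restriction map from \cref{lem:structure_of_naive_spectra}. This is exactly the functor described in the statement.

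The main obstacle is bookkeeping: checking that the equivalence $\EuScript{D}''\simeq\Sp_{C_{p^n}}^{\mathrm{naive}}$ matches this projection with the described functor. It should follow by unwinding the construction of $f$ in \cref{lem:structure_of_naive_spectra}, where the restriction map arises as the unit of the right Kan extension. Indeed, the pullback square in the statement was already used in the proof of \cref{thm:structure_of_genuine_spectra}, so the corollary essentially records it after composing with $f'$.
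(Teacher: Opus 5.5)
Your proposal is correct and is essentially the paper's argument. The paper deduces the corollary directly from \cref{lem:structure_of_naive_spectra} and \cref{thm:structure_of_genuine_spectra}, having already observed in the theorem's proof that $\EuScript{D}'$ is the pullback of $\Sp_{C_{p^n}}^{\mathrm{naive}}$ along the restrictions $\Fun(\Delta^2,-)\to\Fun(\Lambda_2^2,-)$; your pasting-of-pullbacks argument over the source vertices of the zigzag just makes that step explicit.
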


\begin{proof}
    This follows immediately from \cref{lem:structure_of_naive_spectra,thm:structure_of_genuine_spectra}.
\end{proof}

\begin{rem}
    Intuitively, \cref{cor:obstruction_for_Cpn} indicates that a genuine $C_{p^n}$-spectrum $X\in \Sp_{C_{p^n}}$ consists of the following data:
    \begin{itemize}
    \item an underlying naive $C_{p^n}$-spectrum 
    \[X\in \Sp_{C_{p^n}}^{\mathrm{naive}},\]
    \item for $1\leqslant i \leqslant n$, a Borel $C_{p^n}/C_{p^i}$-equivariant diagram
    \[\begin{tikzcd}
	{X^{C_{p^{i-1}}}_{hC_{p^i}/C_{p^{i-1}}}} && {X^{C_{p^{i}}}} \\
	& {(X^{C_{p^{i-1}}})^{hC_{p^i}/C_{p^{i-1}}}}
	\arrow["tr", dashed, from=1-1, to=1-3]
	\arrow["norm"', from=1-1, to=2-2]
	\arrow["res", from=1-3, to=2-2]
    \end{tikzcd}\in \Sp^{BC_{p^n}/C_{p^i}}\]
    in which the restriction and the additive norm map are determined by the naive $C_{p^n}$-spectrum structure.
\end{itemize}
\end{rem}

As a corollary, we have a similar obstruction theorem for constant spectral $C_{p^n}$-Mackey functors.

\begin{cor}\label[cor]{cor:obstruction_for_Cpn_on_const}
    The forgetful functor from constant spectral $C_{p^n}$-Mackey functors to spectra fits into a pullback diagram
    \[\begin{tikzcd}
	{\Sp_{C_{p^n}}^{\mathrm{const}}} & {\displaystyle\prod_{i=1}^{n}\Fun(\Delta^2,\Sp^{BC_{p^n}/C_{p^i}})} \\
	{\Sp} & {\displaystyle\prod_{i=1}^{n}\Fun(\Lambda_2^2,\Sp^{BC_{p^n}/C_{p^i}}).}
	\arrow[from=1-1, to=1-2]
	\arrow[from=1-1, to=2-1]
	\arrow[from=1-2, to=2-2]
  	\arrow[from=2-1, to=2-2]
    \end{tikzcd}\]
    Here, the right vertical map is the restriction along $\Lambda_2^2\to \Delta^2$ and the lower horizontal map sends $X\in \Sp$ to 
    \[\left(\begin{tikzcd}
	    {X_{hC_{p^i}/C_{p^{i-1}}}} &[-3em] &[-2em] {X} \\
	    & {X^{hC_{p^i}/C_{p^{i-1}}}} &
        \arrow[from=1-1, to=2-2]
        \arrow[from=1-3, to=2-2]
    \end{tikzcd}\right)_{1\leqslant i \leqslant n} \]
    where $C_{p^n}/C_{p^{i-1}}$ acts trivially on $X$.
\end{cor}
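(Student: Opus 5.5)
The plan is to paste the pullback square of \cref{cor:obstruction_for_Cpn} onto the pullback square of \cref{defn:constant_mackey_functor} and then apply the pasting law for pullbacks. By \cref{defn:constant_mackey_functor}, $\Sp_{C_{p^n}}^{\mathrm{const}}$ is the pullback of the forgetful functor $\Sp_{C_{p^n}}\to\Sp_{C_{p^n}}^{\mathrm{naive}}$ along the constant diagram functor $c:\Sp\to\Sp_{C_{p^n}}^{\mathrm{naive}}$. By \cref{cor:obstruction_for_Cpn}, the forgetful functor $\Sp_{C_{p^n}}\to\Sp_{C_{p^n}}^{\mathrm{naive}}$ is itself the pullback of the restriction
\[\prod_{i=1}^{n}\Fun(\Delta^2,\Sp^{BC_{p^n}/C_{p^i}})\to\prod_{i=1}^{n}\Fun(\Lambda_2^2,\Sp^{BC_{p^n}/C_{p^i}})\]
along the functor $\Phi:\Sp_{C_{p^n}}^{\mathrm{naive}}\to\prod_i\Fun(\Lambda_2^2,\Sp^{BC_{p^n}/C_{p^i}})$ described there. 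Stacking the two squares, the outer rectangle is a pullback. This shows that $\Sp_{C_{p^n}}^{\mathrm{const}}$ is the pullback of the restriction along $\Delta^2$-to-$\Lambda_2^2$ along the composite $\Phi\circ c$.

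The remaining step is to identify $\Phi\circ c$ with the functor in the statement. For $X\in\Sp$, the constant presheaf $c(X)$ has $c(X)^{C_{p^i}}\simeq X$ for every $i$, with the residual $C_{p^n}/C_{p^i}$-action trivial. Here one uses that the automorphisms of $C_{p^n}/C_{p^i}$ in $O_{C_{p^n}}^{\mathrm{op}}$ act by the identity on a constant diagram, and that the constant functor from $BC_{p^n}/C_{p^i}$ is the trivial-action functor $\Sp\to\Sp^{BC_{p^n}/C_{p^i}}$.

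The norm leg of $\Phi(c(X))$ is the additive norm $X_{hC_{p^i}/C_{p^{i-1}}}\to X^{hC_{p^i}/C_{p^{i-1}}}$ with trivial action. The restriction leg is, by the construction in the proof of \cref{lem:structure_of_naive_spectra}, the unit of the right Kan extension along $BC_{p^n}/C_{p^{i-1}}\to O(i)$, evaluated at $C_{p^n}/C_{p^i}$. For a constant diagram this is the map $X\to X^{hC_{p^i}/C_{p^{i-1}}}$ induced by the identity maps, that is, the canonical map from $X$ into homotopy fixed points for the trivial action. These identifications are natural in $X$, because each step is a composite of restrictions and Kan-extension units applied to $c$.

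The main subtlety is this last identification, specifically the claim that it holds coherently as functors $\Sp\to\Fun(\Lambda_2^2,\Sp^{BC_{p^n}/C_{p^i}})$ and not merely objectwise. I would handle it by writing $c$ as the composite of the trivial-action functor with restriction along $O_{C_{p^n}}^{\mathrm{op}}\to\ast$. The relevant unit maps are then the mates of the identity transformation, and these are the canonical maps $X\to X^{h(C_{p^i}/C_{p^{i-1}})}$. The identification of the norm leg needs nothing extra, since it is already a natural transformation of functors out of $\Sp^{BC_{p^n}/C_{p^{i-1}}}$ and we simply precompose it with the trivial-action functor.
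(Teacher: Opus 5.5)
Your proposal is correct and is the paper's argument: the paper proves this corollary by composing the pullback square of \cref{cor:obstruction_for_Cpn} with that of \cref{defn:constant_mackey_functor}. Your identification of the composite bottom functor with the trivial-action norm and restriction diagram, including the coherence remark via mates, spells out a step the paper leaves implicit.
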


\begin{proof}
    This is the composition of the pullback diagrams in \cref{cor:obstruction_for_Cpn,defn:constant_mackey_functor}.
\end{proof}

\section{The structure ring \texorpdfstring{$R_{C_{p^n}}$}{} for cyclic \texorpdfstring{$p$}{}-groups}\label{section:RCpn}

In this section, we provide an explicit description (\cref{thm:computation_of_RCpn}) of the structure ring $R_{G}$, defined in \cref{thm:morita_description}, when the group $G=C_{p^n}$ is a cyclic $p$-group. We will prove this by applying \cref{cor:obstruction_for_Cpn_on_const}. We also determine the $\mathbb{E}_1$-maps from $R_{C_{p^n}}$ to $\mathbb{Z}$ and $\mathbb{S}[p^{-1}]$ constructed in \cref{exmp:Z-linear_spectra,exmp:|G|-invertible_spectra}.

The proof of \cref{thm:computation_of_RCpn} follows by a composition of pullback diagrams, so we begin with several lemmas to obtain the pullback diagrams that will be used in the proof.

\begin{notn}
    \begin{itemize}
        \item[(1)] For any anima $K$, let $\EuScript{E}_{K}$ be the category with two objects $\{0,1\}$, such that
    \begin{align*}
        \Map(0,0)\simeq\{\mathrm{id}_0\},& \quad\Map(1,1)\simeq\{\mathrm{id}_1\},\\
        \Map(0,1)\simeq K,&\ \quad\Map(1,0)\simeq \varnothing.
    \end{align*}
    When $K$ is discrete and finite with $m$ elements, we denote $\EuScript{E}_{K}$ by $\EuScript{E}_m$.

    \item[(2)] For any anima $K$, we denote by $\EuScript{E}_{K,n}$ the pushout of categories 
    \[\begin{tikzcd}
	{\displaystyle\coprod_{i=1}^{n}\{1\}} & {\displaystyle\coprod_{i=1}^{n}\EuScript{E}_K} \\
	{\{*\}} & {\EuScript{E}_{K,n}.}
	\arrow[from=1-1, to=1-2]
	\arrow[from=1-1, to=2-1]
	\arrow[from=1-2, to=2-2]
	\arrow[from=2-1, to=2-2]
    \end{tikzcd}\]
    By \cite[Theorem 0.1]{haine2025fullyfaithfulfunctorspushouts}, $\EuScript{E}_{K,n}$ is equivalent to the category with objects $$\{0_1,\cdots, 0_n, 1\}$$ with the only nontrivial mapping anima 
    \[\Map(0_i,1)\simeq K,\text{ for }1\leqslant i \leqslant n.\]
    When $K$ is discrete and finite with $m$ elements, we denote $\EuScript{E}_{K,n}$ by $\EuScript{E}_{m,n}$.
    \end{itemize}
\end{notn}

\begin{lem}\label[lem]{lem:adjoint_pair}
    Given an adjunction 
    \[L:\EuScript{C}\rightleftarrows \EuScript{D}:R\]
    between categories $\EuScript{C}$ and $\EuScript{D}$ and an anima $K$, the pullbacks in the following two diagrams
    \[\begin{tikzcd}
	{\EuScript{C}\times_{\EuScript{D}}\Fun(\EuScript{E}_K,\EuScript{D})} & {\Fun(\EuScript{E}_K,\EuScript{D})} \\
	{\EuScript{C}} & {\EuScript{D}}
	\arrow[from=1-1, to=1-2]
	\arrow[from=1-1, to=2-1]
	\arrow["\mathrm{ev}_{0}", from=1-2, to=2-2]
	\arrow["L", from=2-1, to=2-2]
    \end{tikzcd}\]
    and 
    \[\begin{tikzcd}
	{\Fun(\EuScript{E}_K,\EuScript{C})\times_{\EuScript{C}}\EuScript{D}} & {\EuScript{D}} \\
	{\Fun(\EuScript{E}_K,\EuScript{C})} & {\EuScript{C}}
	\arrow[from=1-1, to=1-2]
	\arrow[from=1-1, to=2-1]
	\arrow["R", from=1-2, to=2-2]
	\arrow["{\mathrm{ev}_1}", from=2-1, to=2-2]
    \end{tikzcd}\]
    are equivalent along a comparison functor 
    \[\begin{tikzcd}
	{\EuScript{C}\times_{\EuScript{D}}\Fun(\EuScript{E}_K,\EuScript{D})} & {\Fun(\EuScript{E}_K,\EuScript{C})\times_{\EuScript{C}}\EuScript{D}} \\[-2em]
	{(X,LX\xrightarrow{K}Y)} & {(X\xrightarrow{K}RY,Y).}
	\arrow[from=1-1, to=1-2]
	\arrow[maps to, from=2-1, to=2-2]
    \end{tikzcd}\]
\end{lem}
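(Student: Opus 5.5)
The plan is to identify both pullbacks with a single category of "relative" data, namely the category of pairs $(X,Y)\in\EuScript{C}\times\EuScript{D}$ equipped with a $K$-family of morphisms. First I would reformulate $\Fun(\EuScript{E}_K,\EuScript{D})$. Since $\EuScript{E}_K$ has two objects, no nontrivial endomorphisms and no maps $1\to 0$, it is the pushout of $\{0\}\sqcup\{1\}\leftarrow K\times\{0\}\sqcup K\times\{1\}\to K\times\Delta^1$; equivalently, it is the join-like collage of the profunctor that is constant at $K$. Hence, via \cite[Theorem 0.1]{haine2025fullyfaithfulfunctorspushouts} (or a direct straightening argument), $\Fun(\EuScript{E}_K,\EuScript{D})$ is the category of triples $(A,B,\phi)$ with $\phi\colon K\to\Map_{\EuScript{D}}(A,B)$. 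More precisely, the functor
\[(\mathrm{ev}_0,\mathrm{ev}_1)\colon \Fun(\EuScript{E}_K,\EuScript{D})\to\EuScript{D}\times\EuScript{D}\]
is the cartesian fibration classified by the functor $(A,B)\mapsto\Map(K,\Map_{\EuScript{D}}(A,B))$, that is, by the profunctor $\Map_{\EuScript{D}}(-,-)^K$. The same statement holds for $\EuScript{C}$.

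Next I would compute both pullbacks as fibrations over $\EuScript{C}\times\EuScript{D}$. Base change along $L\times\mathrm{id}\colon\EuScript{C}\times\EuScript{D}\to\EuScript{D}\times\EuScript{D}$ shows that the first pullback is the fibration over $\EuScript{C}\times\EuScript{D}$ classified by $(X,Y)\mapsto\Map_{\EuScript{D}}(LX,Y)^K$. Base change along $\mathrm{id}\times R$ shows that the second is classified by $(X,Y)\mapsto\Map_{\EuScript{C}}(X,RY)^K$. The adjunction supplies a natural equivalence of profunctors $\Map_{\EuScript{D}}(L-,-)\simeq\Map_{\EuScript{C}}(-,R-)$ on $\EuScript{C}^{\mathrm{op}}\times\EuScript{D}$, and applying $(-)^K$ keeps it natural. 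By straightening, an equivalence of classifying functors gives an equivalence of the fibrations over $\EuScript{C}\times\EuScript{D}$. On objects this equivalence sends $(X,LX\xrightarrow{K}Y)$ to the adjoint family $(X\xrightarrow{K}RY,Y)$, which is the comparison functor in the statement.

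The main obstacle is the first step: making rigorous that $\Fun(\EuScript{E}_K,-)$ corresponds to the two-variable fibration classified by $\Map(-,-)^K$, functorially enough to allow base change. If it is awkward to cite this directly, I would instead write $\EuScript{E}_K$ as the pushout $(\{0\}\sqcup\{1\})\sqcup_{K\times\partial\Delta^1}(K\times\Delta^1)$. Then $\Fun(\EuScript{E}_K,\EuScript{D})$ is the pullback of $\Fun(K\times\Delta^1,\EuScript{D})=\Fun(K,\Fun(\Delta^1,\EuScript{D}))$ over $\Fun(K,\EuScript{D}\times\EuScript{D})$ along the diagonal $\EuScript{D}\times\EuScript{D}\to\Fun(K,\EuScript{D}\times\EuScript{D})$. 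This reduces the problem to the case $K=*$, which is the standard equivalence $\EuScript{C}\times_{\EuScript{D}}\Fun(\Delta^1,\EuScript{D})\simeq\Fun(\Delta^1,\EuScript{C})\times_{\EuScript{C}}\EuScript{D}$ induced by the adjunction, since comma categories $L\downarrow\mathrm{id}$ and $\mathrm{id}\downarrow R$ agree (see \cite[Lemma 5.2.4.?]{HTT}-type statements, or via the correspondence $\EuScript{M}\to\Delta^1$ of the adjunction, where both sides are the category of sections of $\EuScript{M}$ over $\Delta^1$). The case of general $K$ then follows by applying $\Fun(K,-)$ and pulling back along the diagonals. These diagonals are compatible because the $K=*$ equivalence lies over $\EuScript{C}\times\EuScript{D}$.
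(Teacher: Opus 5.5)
Your proposal is correct, and your second route is a genuinely different argument from the paper's.

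\textbf{The paper's route.} It builds the comparison functor by hand. The unit sends $(X,LX\xrightarrow{K}Y)$ to $(X\to RLX,\ RLX\xrightarrow{K}RY)$. These are then composed by restricting along $\EuScript{E}_K\simeq\{0,2\}\subseteq\Delta^1\sqcup_{\Delta^0}\EuScript{E}_K$. The paper then asserts essential surjectivity ``by construction.'' It proves full faithfulness by identifying both mapping anima with $\Map_{\EuScript{C}}(X,X')\times_{\Map_{\EuScript{C}}(X,RY')^K}\Map_{\EuScript{D}}(Y,Y')$.

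\textbf{Your route.} You write $\Fun(\EuScript{E}_K,\EuScript{D})\simeq(\EuScript{D}\times\EuScript{D})\times_{\Fun(K,\EuScript{D}\times\EuScript{D})}\Fun(K,\Fun(\Delta^1,\EuScript{D}))$. Then each pullback in the lemma becomes $(\EuScript{C}\times\EuScript{D})\times_{\Fun(K,\EuScript{C}\times\EuScript{D})}\Fun(K,-)$ applied to the corresponding comma category. Everything reduces formally to the $K=*$ equivalence of comma categories over $\EuScript{C}\times\EuScript{D}$. On objects that equivalence is $f\mapsto Rf\circ\eta_X$, so it agrees with the paper's functor.

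\textbf{What each buys.} Yours produces an equivalence lying over $\EuScript{C}\times\EuScript{D}$ from the outset, with no mapping-space bookkeeping. You also never have to check that the map induced by the hand-built functor really is the displayed chain of identifications. The paper's argument is more self-contained: it uses only the adjunction equivalence on mapping anima plus Haine's pushout formula. You instead use the comma-category statement, or in your first route two-variable straightening compatible with base change, as a black box.

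Two small corrections to the write-up.

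\begin{enumerate}
\item The functor $(\mathrm{ev}_0,\mathrm{ev}_1)\colon\Fun(\EuScript{E}_K,\EuScript{D})\to\EuScript{D}\times\EuScript{D}$ is not a cartesian fibration. Already for $K=*$, a morphism in the second variable need not admit a cartesian lift. It is a two-sided fibration (bifibration), cartesian in the first variable and cocartesian in the second. It is in that sense that it is classified by $\Map_{\EuScript{D}}(-,-)^K$ on $\EuScript{D}^{\mathrm{op}}\times\EuScript{D}$.
\item Haine's theorem does not apply to your pushout $(\{0\}\sqcup\{1\})\sqcup_{K\times\partial\Delta^1}(K\times\Delta^1)$, because neither leg is fully faithful for general $K$. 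The identification with $\EuScript{E}_K$ is nevertheless true. For instance, using $(K\times\Delta^1)\sqcup_{K\times\{1\}}\Delta^0\simeq K^{\triangleright}$, rewrite the pushout as $\Delta^0\sqcup_K K^{\triangleright}$. There $K\hookrightarrow K^{\triangleright}$ is fully faithful, and Haine's formula gives $\Map(0,1)\simeq K$.
\end{enumerate}
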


\begin{proof}
    We first show the well-definedness of the comparison functor. To see this, note that in the pushout diagram
    \[\begin{tikzcd}
	{\Delta^0\simeq\{1\}} & {\EuScript{E}_K\simeq\{1\xrightarrow{K}2\}} \\
	{\Delta^1\simeq\{0\to1\}} & {\Delta^1\sqcup_{\Delta^0}\EuScript{E}_K},
	\arrow[from=1-1, to=1-2]
	\arrow[from=1-1, to=2-1]
	\arrow[from=1-2, to=2-2]
	\arrow[from=2-1, to=2-2]
    \end{tikzcd}\]
    the full subcategory of $\Delta^1\sqcup_{\Delta^0}\EuScript{E}_K$ spanned by $0$ and $2$ is equivalent to $\EuScript{E}_K$ by \cite[Theorem 0.1]{haine2025fullyfaithfulfunctorspushouts}. Restriction to this full subcategory then defines a composition functor 
    \[\begin{tikzcd}
	{\Fun(\Delta^1,\EuScript{C})\times_{\EuScript{C}}\Fun(\EuScript{E}_K,\EuScript{C})} & {\Fun(\EuScript{E}_K,\EuScript{C})} \\[-2em]
	{(X\to Y, Y\xrightarrow{K}Z)} & {(X\xrightarrow{K}Z).}
	\arrow[from=1-1, to=1-2]
	\arrow[maps to, from=2-1, to=2-2]
    \end{tikzcd}\]

    The following diagram 
    \[\begin{tikzcd}
	& {\Fun(\EuScript{E}_K,\EuScript{D})} & \\
	{\EuScript{C}} & {\EuScript{D}} & {\Fun(\EuScript{E}_K,\EuScript{C})} \\
	& {\Fun(\Delta^1 ,\EuScript{C})} & {\EuScript{C}}
	\arrow["{\mathrm{ev}_{0}}", from=1-2, to=2-2]
	\arrow["R", from=1-2, to=2-3]
	\arrow["L", from=2-1, to=2-2]
	\arrow["{\mathrm{unit}:\ \mathrm{id}\to RL}"', from=2-1, to=3-2]
	\arrow["R", from=2-2, to=3-3]
	\arrow["{\mathrm{ev}_{0}}", from=2-3, to=3-3]
	\arrow["{\mathrm{ev}_{1}}"', from=3-2, to=3-3]
    \end{tikzcd}\]
    induces a functor on pullbacks 
    \[\begin{tikzcd}
	{\EuScript{C}\times_{\EuScript{D}}\Fun(\EuScript{E}_K,\EuScript{D})} & {\Fun(\Delta^1,\EuScript{C})\times_{\EuScript{C}}\Fun(\EuScript{E}_K,\EuScript{C})} \\[-2em]
	{(X,LX\xrightarrow{K}Y)} & {(X\to RLX, RLX\xrightarrow{K}RY).}
	\arrow[from=1-1, to=1-2]
	\arrow[maps to, from=2-1, to=2-2]
    \end{tikzcd}\]
    Further applying the composition functor above, we get 
    \[\begin{tikzcd}
	{\EuScript{C}\times_{\EuScript{D}}\Fun(\EuScript{E}_K,\EuScript{D})} & {\Fun(\EuScript{E}_K,\EuScript{C})} \\[-2em]
	{(X,LX\xrightarrow{K}Y)} & {(X\xrightarrow{K}RY),}
	\arrow[from=1-1, to=1-2]
	\arrow[maps to, from=2-1, to=2-2]
    \end{tikzcd}\]
    which by definition fits into a commutative diagram 
    \[\begin{tikzcd}
	{\EuScript{C}\times_{\EuScript{D}}\Fun(\EuScript{E}_K,\EuScript{D})} & {\EuScript{D}} \\
	{\Fun(\EuScript{E}_K,\EuScript{C})} & {\EuScript{C}.}
	\arrow["\mathrm{ev}_1", from=1-1, to=1-2]
	\arrow[from=1-1, to=2-1]
	\arrow["R", from=1-2, to=2-2]
	\arrow["{\mathrm{ev}_1}", from=2-1, to=2-2]
    \end{tikzcd}\]
    The induced functor on the pullback defines the comparison functor in \cref{lem:adjoint_pair}
    \[\EuScript{C}\times_{\EuScript{D}}\Fun(\EuScript{E}_K,\EuScript{D})\to\Fun(\EuScript{E}_K,\EuScript{C})\times_{\EuScript{C}}\EuScript{D}. \]

    This comparison functor is essentially surjective by construction. To prove full faithfulness, let $$(X,LX\xrightarrow{K}Y),(X',LX'\xrightarrow{K}Y')\in \EuScript{C}\times_{\EuScript{D}}\Fun(\EuScript{E}_K,\EuScript{D})$$ be two objects. Then we have 
    \begin{align*}
        &\Map_{\EuScript{C}}(X,X')\bigtimes_{\Map_{\EuScript{D}}(LX,LX')}\Map_{\EuScript{D}}(LX,LX')\bigtimes_{\Map_{\EuScript{D}}(LX,Y')^{K}}\Map_{\EuScript{D}}(Y,Y')\\
        \simeq & \Map_{\EuScript{C}}(X,X')\bigtimes_{\Map_{\EuScript{C}}(X,RY')^{K}}\Map_{\EuScript{D}}(Y,Y')\\
        \simeq & \Map_{\EuScript{C}}(X,X')\bigtimes_{\Map_{\EuScript{C}}(X,RY')^{K}}\Map_{\EuScript{C}}(RY,RY')\bigtimes_{\Map_{\EuScript{C}}(RY,RY')}\Map_{\EuScript{D}}(Y,Y').
    \end{align*}
    This proves full faithfulness and completes the proof.
\end{proof}

\begin{lem}\label[lem]{lem:coprod}
    Let $\EuScript{C}$ be a category with finite coproducts and let $K_1\to K_2$ be a map of anima. Then there is a pullback diagram of categories 
    \[\begin{tikzcd}
	{\Fun(\EuScript{E}_{K_2,n},\EuScript{C})} & {\Fun(\EuScript{E}_{K_2},\EuScript{C})} \\
	{\Fun(\EuScript{E}_{K_1,n},\EuScript{C})} & {\Fun(\EuScript{E}_{K_1},\EuScript{C}).}
	\arrow[from=1-1, to=1-2]
	\arrow[from=1-1, to=2-1]
	\arrow[from=1-2, to=2-2]
	\arrow[from=2-1, to=2-2]
    \end{tikzcd}\]
    Here, the vertical arrows are restrictions along $K_1\to K_2$ and the horizontal arrows send 
    \[f\in \Fun(\EuScript{E}_{K,n},\EuScript{C})\mapsto \left(\bigoplus_{i=1}^{n}f(0_i)\xrightarrow{K}f(1)\right)\]
    for $K\in\{K_1,K_2\}$.
\end{lem}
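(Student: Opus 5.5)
The plan is to reduce everything to the universal property of $\EuScript{E}_{K,n}$ as a pushout, together with the observation that a functor out of $\EuScript{E}_K$ is the same as a pair of objects together with a $K$-indexed family of maps between them. First, using the defining pushout of $\EuScript{E}_{K,n}$ in the Notation, we identify
\[\Fun(\EuScript{E}_{K,n},\EuScript{C})\simeq \Fun(\EuScript{E}_K,\EuScript{C})\times_{\EuScript{C}}\cdots\times_{\EuScript{C}}\Fun(\EuScript{E}_K,\EuScript{C}),\]
the $n$-fold fiber product over $\mathrm{ev}_1$. We also use the pushout
\[\EuScript{E}_K\simeq \{0\}\sqcup\{1\}\ \sqcup_{\{0\}\times K\sqcup \{1\}\times K}\ (\Delta^1\times K)\]
(the standard presentation of $\EuScript{E}_K$ as a categorical pushout, valid by \cite[Theorem 0.1]{haine2025fullyfaithfulfunctorspushouts} or by a direct check on mapping anima). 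This identifies $\Fun(\EuScript{E}_K,\EuScript{C})$ with the category of triples $(A,B,\phi)$, where $\phi:K\to\Map_{\EuScript{C}}(A,B)$.

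Next we exploit the coproduct. The functor $\Fun(\EuScript{E}_{K,n},\EuScript{C})\to \Fun(\EuScript{E}_K,\EuScript{C})$ is left Kan extension along the functor $\EuScript{E}_{K,n}\to\EuScript{E}_K$ collapsing all $0_i$ to $0$. Its pointwise formula gives $\bigoplus_i f(0_i)$ at $0$ and $f(1)$ at $1$, because $1$ is terminal among the objects mapping to it in the relevant comma category. This Kan extension is natural in $K$, so the square commutes.

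To prove the square is a pullback, we work fiberwise over the objects. Both vertical maps are isomorphisms on the underlying objects: restriction along $K_1\to K_2$ does not change the objects $f(0_i), f(1)$. It therefore suffices to compare the data of morphisms, and then the mapping anima. Over fixed objects $A_1,\dots,A_n,B$, the fiber of the left vertical map over a point is the fiber of
\[\prod_i \Map(K_2,\Map(A_i,B))\to \prod_i\Map(K_1,\Map(A_i,B)).\]
The fiber of the right vertical map is the fiber of
\[\Map(K_2,\Map(\textstyle\bigoplus A_i,B))\to\Map(K_1,\Map(\bigoplus A_i,B)).\]
These agree via the coproduct equivalence $\Map(\bigoplus A_i,B)\simeq\prod_i\Map(A_i,B)$, which commutes with $\Map(K,-)$.

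To make this rigorous rather than fiberwise, I would show the comparison functor from $\Fun(\EuScript{E}_{K_2,n},\EuScript{C})$ to the pullback is essentially surjective and fully faithful. This is done by computing mapping anima as iterated fiber products, exactly as in the proof of \cref{lem:adjoint_pair}. There, $\Map$ in $\Fun(\EuScript{E}_K,\EuScript{C})$ is
\[\Map(A,A')\times_{\Map(A,B')^{K}}\Map(B,B').\]
Full faithfulness then reduces to the identity $\prod_i\Map(A_i,B')^{K}\simeq \Map(\bigoplus A_i,B')^{K}$, together with the cancellation of the $K_1$-terms appearing in both factors.

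The main obstacle is essential surjectivity and the coherent identification of objects. An object of the pullback is a $K_2$-family $\bigoplus A_i\to B$ whose restriction to $K_1$ is identified with the sum of $K_1$-families $A_i\to B$. We must produce $K_2$-families $A_i\to B$ by precomposing with the coproduct inclusions, and check that they restrict correctly. I would handle this by noting that precomposition with the inclusions gives an inverse equivalence $\Fun(\EuScript{E}_K,\EuScript{C})_{\bigoplus}\to \Fun(\EuScript{E}_{K,n},\EuScript{C})$, natural in $K$ as a functor of $K\in\mathcal{S}^{\mathrm{op}}$. Here $\Fun(\EuScript{E}_K,\EuScript{C})_{\bigoplus}$ denotes the full subcategory where the source is presented as a coproduct. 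Equivalently, $\Fun(\EuScript{E}_{K,n},\EuScript{C})$ is naturally equivalent to the full subcategory of $\Fun(\EuScript{E}_K,\EuScript{C})\times_{\EuScript{C}}\Fun(\EuScript{E}_{\varnothing,n},\EuScript{C})$ determined by a coproduct condition. Once this equivalence is natural in $K$, the square is a pullback formally, since the coproduct condition only involves the $\EuScript{E}_{\varnothing,n}$-part, which is the same for $K_1$ and $K_2$.
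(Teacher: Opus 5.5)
\textbf{Gap in essential surjectivity.} Several of your steps are fine. The horizontal functor is indeed left Kan extension along the collapse $q\colon\EuScript{E}_{K,n}\to\EuScript{E}_K$. Naturality in $K$ is a Beck--Chevalley condition, and your pointwise formula verifies it. Your full-faithfulness computation also works: for each $K$ one has $\Map(f,f')\simeq\prod_i\Map(A_i,A_i')\times_{\Map(\bigoplus_i A_i,\bigoplus_i A_i')}\Map(q_!f,q_!f')$, compatibly with restriction along $K_1\to K_2$, so pulling back gives the claim.

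The gap is where you located it, in essential surjectivity, and the device you propose does not work as written. A ``presentation of the source as a coproduct'' is structure (objects $A_i$ together with maps $A_i\to h(0)$), not a property. So $\Fun(\EuScript{E}_K,\EuScript{C})_{\bigoplus}$ is not a full subcategory.

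Your reformulation with $\EuScript{E}_{\varnothing,n}$ does not repair this. That category is discrete, so an object of $\Fun(\EuScript{E}_K,\EuScript{C})\times_{\EuScript{C}}\Fun(\EuScript{E}_{\varnothing,n},\EuScript{C})$ carries no maps $A_i\to C$. Hence ``$C$ is a coproduct of the $A_i$'' is not a condition on such objects. Any full subcategory cut out this way has mapping anima containing a factor $\Map(C,C')$ independent of $\prod_i\Map(A_i,A_i')$, so it is not equivalent to $\Fun(\EuScript{E}_{K,n},\EuScript{C})$. Finally, the claim that precomposition with the inclusions is an inverse equivalence is only asserted, but it is the real content of the lemma.

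\textbf{The fix.} The missing idea, which is exactly the paper's proof, is to build the cocone into the indexing category. Use $\EuScript{E}_{1,n}$ instead of $\EuScript{E}_{\varnothing,n}$, and glue its cone point to the source of $\EuScript{E}_K$ to form $\EuScript{E}_{1,n}\sqcup_{\Delta^0}\EuScript{E}_K$. By \cite[Theorem 0.1]{haine2025fullyfaithfulfunctorspushouts}, the full subcategory on $0_1,\dots,0_n$ and the target of $\EuScript{E}_K$ is $\EuScript{E}_{K,n}$. By \cite[Proposition 4.3.2.17]{HTT}, left Kan extension along this fully faithful inclusion identifies $\Fun(\EuScript{E}_{K,n},\EuScript{C})$ with a full subcategory of $\Fun(\EuScript{E}_{1,n}\sqcup_{\Delta^0}\EuScript{E}_K,\EuScript{C})\simeq\Fun(\EuScript{E}_{1,n},\EuScript{C})\times_{\EuScript{C}}\Fun(\EuScript{E}_K,\EuScript{C})$. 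That subcategory consists of the objects whose $\EuScript{E}_{1,n}$-part is a coproduct cocone. Restriction, which is your ``precomposition with the inclusions'', is the inverse. Since this condition does not involve $K$, the square is a pullback formally.

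\textbf{An alternative fix.} You could instead make your fiberwise comparison rigorous on cores. Your suspension presentation shows that $\Fun(\EuScript{E}_K,\EuScript{C})^{\simeq}\to(\EuScript{C}^{\simeq})^{2}$ has fibre $\Map(K,\Map_{\EuScript{C}}(A,B))$, naturally in $K$. You would then check that $q_!$ acts on fibres via $\Map(\bigoplus_i A_i,B)\simeq\prod_i\Map(A_i,B)$. Comparing fibres over $(\EuScript{C}^{\simeq})^{n+1}$ shows the comparison functor is an equivalence on cores, and together with full faithfulness this finishes the proof.
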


\begin{proof}
    We begin with the well-definedness of the horizontal arrows. To see this, for $K\in\{K_1,K_2\}$, consider the pushout diagram 
    \[\begin{tikzcd}
	{\Delta^0\simeq\{1\}} & {\EuScript{E}_{K}\simeq\{1\xrightarrow{K}2\}} \\
	{\EuScript{E}_{1,n}} & {\EuScript{E}_{1,n}\sqcup_{\Delta^0}\EuScript{E}_{K}.}
	\arrow[hook, from=1-1, to=1-2]
	\arrow[hook', from=1-1, to=2-1]
	\arrow[from=1-2, to=2-2]
	\arrow[from=2-1, to=2-2]
    \end{tikzcd}\]
    By \cite[Theorem 0.1]{haine2025fullyfaithfulfunctorspushouts}, the full subcategory of $\EuScript{E}_{1,n}\sqcup_{\Delta^0}\EuScript{E}_{K}$ spanned by $$\{0_1,\cdots, 0_n,2\}$$ is equivalent to $\EuScript{E}_{K,n}$. Then by \cite[Proposition 4.3.2.17]{HTT}, the left Kan extension along 
    \[\EuScript{E}_{K,n}\hookrightarrow\EuScript{E}_{1,n}\sqcup_{\Delta^0}\EuScript{E}_{K}\]
    defines a fully faithful functor 
    \[\Fun(\EuScript{E}_{K,n},\EuScript{C})\hookrightarrow\Fun(\EuScript{E}_{1,n}\sqcup_{\Delta^0}\EuScript{E}_{K},\EuScript{C})\]
    with essential image consisting of those $f\in \Fun(\EuScript{E}_{1,n}\sqcup_{\Delta^0}\EuScript{E}_{K},\EuScript{C})$ that exhibits $f(1)$ as a coproduct of $f(0_i)$ for $1\leqslant i \leqslant n$. Postcomposing with the restriction along 
    \[\EuScript{E}_{K}\to\EuScript{E}_{1,n}\sqcup_{\Delta^0}\EuScript{E}_{K}\]
    defines the horizontal arrows in \cref{lem:coprod}.

    By full faithfulness, we get a pullback diagram 
    \[\begin{tikzcd}
	{\Fun(\EuScript{E}_{K_2,n},\EuScript{C})} & {\Fun(\EuScript{E}_{1,n}\sqcup_{\Delta^0}\EuScript{E}_{K_2},\EuScript{C})} \\
	{\Fun(\EuScript{E}_{K_1,n},\EuScript{C})} & {\Fun(\EuScript{E}_{1,n}\sqcup_{\Delta^0}\EuScript{E}_{K_1},\EuScript{C})}
	\arrow[hook, from=1-1, to=1-2]
	\arrow[from=1-1, to=2-1]
	\arrow[from=1-2, to=2-2]
	\arrow[hook, from=2-1, to=2-2]
    \end{tikzcd}\]
    and the lemma follows since the diagram 
    \[\begin{tikzcd}
	{\EuScript{E}_{K_1}} & {\EuScript{E}_{1,n}\sqcup_{\Delta^0}\EuScript{E}_{K_1}} \\
	{\EuScript{E}_{K_2}} & {\EuScript{E}_{1,n}\sqcup_{\Delta^0}\EuScript{E}_{K_2}}
	\arrow[from=1-1, to=1-2]
	\arrow[from=1-1, to=2-1]
	\arrow[from=1-2, to=2-2]
	\arrow[from=2-1, to=2-2]
    \end{tikzcd}\]
    is a pushout.
\end{proof}

\begin{lem}\label[lem]{lem:cofiber}
    Let $\EuScript{C}$ be a stable category. Then there is a pullback diagram 
    \[\begin{tikzcd}
	{\Fun(\Delta^2,\EuScript{C})} & {\Fun(\EuScript{E}_1,\EuScript{C})} \\
	{\Fun(\Lambda_2^2,\EuScript{C})} & {\Fun(\EuScript{E}_2,\EuScript{C})}
	\arrow[from=1-1, to=1-2]
	\arrow[from=1-1, to=2-1]
	\arrow[from=1-2, to=2-2]
	\arrow[from=2-1, to=2-2]
    \end{tikzcd}\]
    where
    \begin{itemize}
        \item[(1)] the vertical arrows are restrictions,
        \item[(2)] the upper horizontal arrow sends $X\in \Fun(\Delta^2,\EuScript{C})$ to 
        \[X(0)\to X(2) \to \mathrm{cof}(X(1)\to X(2)),\]
        \item[(3)] and the lower horizontal arrow sends $X\in \Fun(\Lambda_2^2,\EuScript{C})$ to 
        \[X(0)\to X(2) \to \mathrm{cof}(X(1)\to X(2))\]
        and the zero map $X(0)\to \mathrm{cof}(X(1)\to X(2))$.
    \end{itemize}
\end{lem}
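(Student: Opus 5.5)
The plan is to pass through a cofiber description of $\Delta^2$-diagrams, which turns the square into one that can be checked directly. In a stable category $\EuScript{C}$, a functor $X\in\Fun(\Delta^2,\EuScript{C})$ is the same as a horn $X(0)\to X(2)\leftarrow X(1)$ together with a factorization of $X(0)\to X(2)$ through $X(1)$. Since $X(1)\to X(2)\to \mathrm{cof}(X(1)\to X(2))$ is a cofiber sequence, such a factorization is the same as a null-homotopy of the composite $X(0)\to X(2)\to \mathrm{cof}(X(1)\to X(2))$. The square in the statement should encode exactly this.

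First I will check well-definedness and commutativity. The lower functor $\Fun(\Lambda_2^2,\EuScript{C})\to\Fun(\EuScript{E}_2,\EuScript{C})$ sends a horn to $X(0)\rightrightarrows C$, where $C=\mathrm{cof}(X(1)\to X(2))$, with the two maps given by the composite and by $0$. Both are functorial, since cofibers and zero maps are functorial in stable $\EuScript{C}$. For $X$ extended to $\Delta^2$, the composite $X(1)\to X(2)\to C$ carries a canonical null-homotopy, and precomposing with $X(0)\to X(1)$ gives a canonical homotopy between the two parallel maps. This is the same as extending the $\EuScript{E}_2$-diagram along the fold $\EuScript{E}_2\to\EuScript{E}_1$, which gives the upper functor to $\Fun(\EuScript{E}_1,\EuScript{C})$. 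A clean way to make this coherent is to construct both functors from a single functor on $\Fun(\Delta^2,\EuScript{C})$ (respectively $\Fun(\Lambda_2^2,\EuScript{C})$) to diagrams on a larger poset that also contain the cofiber square, and then restrict.

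Next I will prove that the induced functor $\Phi:\Fun(\Delta^2,\EuScript{C})\to \Fun(\Lambda_2^2,\EuScript{C})\times_{\Fun(\EuScript{E}_2,\EuScript{C})}\Fun(\EuScript{E}_1,\EuScript{C})$ is an equivalence. I plan to check that it is fully faithful on mapping spectra and essentially surjective.

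1. \emph{Essential surjectivity.} An object of the pullback is a horn together with a homotopy between $X(0)\to C$ and $0$. Because $X(1)\to X(2)\to C$ is a fiber sequence, this homotopy gives a lift $X(0)\to X(1)$ together with a homotopy between its composite to $X(2)$ and the given map. That data is exactly a $\Delta^2$-diagram.

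2. \emph{Full faithfulness.} This is the main obstacle. Following the computation in the proof of \cref{thm:structure_of_genuine_spectra}, the fiber of $F(S,T)\to F(S|_{\Lambda_2^2},T|_{\Lambda_2^2})$ is $\Omega F(S(0),\mathrm{fib}(T(1)\to T(2)))$. On the other side, $\Fun(\EuScript{E}_1,\EuScript{C})\to\Fun(\EuScript{E}_2,\EuScript{C})$ restricts along the fold. Its fiber on mapping spectra is the fiber of $F(S(0),T(0))\times_{F(S(0),T(1))}F(S(1),T(1))$ mapping to the analogous object with $F(S(0),T(1))^{2}$ in the middle, which is $\Omega F(S(0),T_{\mathrm{target}})$. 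Here the target object is $\mathrm{cof}(T(1)\to T(2))$, giving $\Omega F(S(0),\mathrm{cof}(T(1)\to T(2)))$.

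These two fibers do not literally match: one involves $\Omega F(S(0),\mathrm{fib}(T(1)\to T(2)))$, the other $\Omega F(S(0),\mathrm{cof}(T(1)\to T(2)))$, and stably $\mathrm{cof}=\Sigma\,\mathrm{fib}$. So I need to track the suspension shifts carefully. The point to verify is that the map of fiber sequences induced by $\Phi$ identifies the two fibers through the canonical equivalence $\mathrm{fib}(T(1)\to T(2))\simeq\Omega\,\mathrm{cof}(T(1)\to T(2))$. This is where I expect the real work to lie. If the degrees genuinely fail to match, I would fall back on identifying objects through the fiber-sequence argument of step 1 and the morphisms through the induced maps on lifts.

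Once full faithfulness holds, the five lemma on the map of fiber sequences shows that $\Phi$ induces an equivalence on mapping spectra, which proves that the square is a pullback.
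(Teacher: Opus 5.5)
Your outline is the paper's: lift through the fiber sequence for essential surjectivity, then compare the fibers of the two vertical restrictions on mapping spectra. The gap is that full faithfulness is never proved, and the mismatch that stopped you comes from a miscalculation, not from the lemma.

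Write $C_T=\mathrm{cof}(T(1)\to T(2))$ and set $P=F(S(0),T(0))$, $M=F(C_S,C_T)$, $Q=F(S(0),C_T)$. The restriction $\Fun(\EuScript{E}_1,\EuScript{C})\to\Fun(\EuScript{E}_2,\EuScript{C})$ induces $P\times_Q M\to P\times_{Q\oplus Q} M$. This map is the identity on $P\oplus M$ and the diagonal $\Delta\colon Q\to Q\oplus Q$ on the base. Its fiber is therefore $\Omega\,\mathrm{fib}(\Delta)$. Since $\mathrm{fib}(\Delta)\simeq\Omega Q$, this equals $\Omega^2 F(S(0),\mathrm{cof}(T(1)\to T(2)))$, not $\Omega F(S(0),\mathrm{cof}(T(1)\to T(2)))$; you dropped the loop coming from $\mathrm{fib}(\Delta)$. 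With the correct count the two fibers agree, because $\Omega F(S(0),\mathrm{fib}(T(1)\to T(2)))\simeq\Omega^2 F(S(0),\mathrm{cof}(T(1)\to T(2)))$. This is exactly the paper's computation.

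Your fallback could not have rescued the argument. Mapping spectra in the pullback category are $F(S|_{\Lambda_2^2},T|_{\Lambda_2^2})\times_{F_{\EuScript{E}_2}}F_{\EuScript{E}_1}$. So if the shifts genuinely differed, $\Phi$ could not be fully faithful, and the lemma itself would be false; re-identifying objects and morphisms cannot change that.

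After fixing the degree, one step remains. You must check that the map on fibers induced by $\Phi$ is an equivalence, not merely that source and target are abstractly equivalent. It is (up to sign) $\Omega F(S(0),-)$ applied to the canonical comparison $\mathrm{fib}(T(1)\to T(2))\to\Omega\,\mathrm{cof}(T(1)\to T(2))$. This holds because the square commutes via the canonical null-homotopy of $T(1)\to T(2)\to\mathrm{cof}(T(1)\to T(2))$; the paper leaves this implicit. Then compare the two fiber sequences over the common base $F(S|_{\Lambda_2^2},T|_{\Lambda_2^2})$.

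Your last sentence is also circular. The fiber-sequence comparison is how you prove full faithfulness; it is not a consequence of it.
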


\begin{proof}
    The diagram commutes since an object $X\in \Fun(\Delta^2,\EuScript{C})$ functorially determines a null homotopy of 
    \[X(0)\to X(2) \to \mathrm{cof}(X(1)\to X(2))\]
    by taking the composition of $X(0)\to X(1)$ and the canonical null homotopy of 
    \[X(1)\to X(2) \to \mathrm{cof}(X(1)\to X(2)).\]

    Conversely, given an object $X\in \Fun(\Lambda_2^2,\EuScript{C})$ and a null homotopy of 
    \[X(0)\to X(2) \to \mathrm{cof}(X(1)\to X(2)),\]
    since fiber sequences and cofiber sequences coincide in $\EuScript{C}$, there exist a lift and a homotopy 
    \[\begin{tikzcd}
	{X(0)} &[-2em] &[-2em] {X(1)} \\
	& {X(2)}
	\arrow[dashed, from=1-1, to=1-3]
	\arrow[from=1-1, to=2-2]
	\arrow[from=1-3, to=2-2]
    \end{tikzcd}\]
    that induces the chosen null homotopy of 
    \[X(0)\to X(2) \to \mathrm{cof}(X(1)\to X(2))\]
    by composing with the canonical null homotopy of 
    \[X(1)\to X(2) \to \mathrm{cof}(X(1)\to X(2)).\]
    This shows that the induced functor from $\Fun(\Delta^2,\EuScript{C})$ to the pullback is essentially surjective. 

    To prove full faithfulness, let $X,Y\in \Fun(\Delta^2,\EuScript{C})$ be two objects. The corresponding mapping spectra in 
    \[\Fun(\Delta^2,\EuScript{C}),\ \Fun(\Lambda_2^2,\EuScript{C}),\ \Fun(\EuScript{E}_1,\EuScript{C}),\ \text{and }\Fun(\EuScript{E}_2,\EuScript{C})\]
    are 
    \[F(X(0),Y(0))\bigtimes_{F(X(0),Y(1))}\left(F(X(1),Y(1))\bigtimes_{F(X(1),Y(2))}F(X(2),Y(2))\right),\]
    \[F(X(0),Y(0))\bigtimes_{F(X(0),Y(2))}\left(F(X(1),Y(1))\bigtimes_{F(X(1),Y(2))}F(X(2),Y(2))\right),\]
    \[F(X(0),Y(0))\bigtimes_{F(X(0),\mathrm{cof}(Y(1)\to Y(2)))}F(\mathrm{cof}(X(1)\to X(2)),\mathrm{cof}(Y(1)\to Y(2))),\]
    and 
    \[F(X(0),Y(0))\bigtimes_{F(X(0),\mathrm{cof}(Y(1)\to Y(2)))^{\oplus 2}}F(\mathrm{cof}(X(1)\to X(2)),\mathrm{cof}(Y(1)\to Y(2))),\]
    respectively.

    The fiber of 
    \[\begin{tikzcd}
	{F(X(0),Y(0))\displaystyle\bigtimes_{F(X(0),Y(1))}\left(F(X(1),Y(1))\displaystyle\bigtimes_{F(X(1),Y(2))}F(X(2),Y(2))\right)} \\
	{F(X(0),Y(0))\displaystyle\bigtimes_{F(X(0),Y(2))}\left(F(X(1),Y(1))\displaystyle\bigtimes_{F(X(1),Y(2))}F(X(2),Y(2))\right)}
	\arrow[from=1-1, to=2-1]
    \end{tikzcd}\]
    is equivalent to 
    \[\Omega F(X(0),\mathrm{fib}(Y(1)\to Y(2))),\]
    and the fiber of 
    \[\begin{tikzcd}
	{F(X(0),Y(0))\displaystyle\bigtimes_{F(X(0),Y(2))}\left(F(X(1),Y(1))\bigtimes_{F(X(1),Y(2))}F(X(2),Y(2))\right)} \\
	F(X(0),Y(0))\displaystyle\bigtimes_{F(X(0),\mathrm{cof}(Y(1)\to Y(2)))^{\oplus 2}}F(\mathrm{cof}(X(1)\to X(2)),\mathrm{cof}(Y(1)\to Y(2)))
	\arrow[from=1-1, to=2-1]
    \end{tikzcd}\]
    is equivalent to 
    \[\Omega^2 F(X(0),\mathrm{cof}(Y(1)\to Y(2))).\]
    The two fibers coincide, which completes the proof.
\end{proof}

Now we can state the main theorem of this section.

\begin{con}\label[con]{con:tilde_RCpn}
For each $1\leqslant i \leqslant n$, equip the sphere spectrum $\mathbb{S}$ with the trivial $C_{p^n}/C_{p^{i-1}}$ action and consider the norm 
\[\mathbb{S}_{hC_{p^i}/C_{p^{i-1}}}\simeq BC_{p^i}/C_{p^{i-1}}\to \mathbb{S}^{hC_{p^i}/C_{p^{i-1}}}\simeq \Hom(BC_{p^i}/C_{p^{i-1}},\mathbb{S})\in\Sp^{BC_{p^n}/C_{p^{i}}}.\]
Let $\tilde{B}C_{p^i}/C_{p^{i-1}}$ be the fiber of the collapsing map 
\[BC_{p^i}/C_{p^{i-1}}\to \mathbb{S}\in\Sp^{BC_{p^n}/C_{p^{i}}}.\] 
Then by composing with the norm map, we get a Borel $C_{p^n}/C_{p^{i}}$-equivariant map 
\[BC_{p^i}/C_{p^{i-1}}\to \Hom(\tilde{B}C_{p^i}/C_{p^{i-1}},\mathbb{S})\in\Sp^{BC_{p^n}/C_{p^{i}}}.\]
This is adjoint to 
\[(\tilde{B}C_{p^i}/C_{p^{i-1}}\otimes BC_{p^i}/C_{p^{i-1}})_{hC_{p^n}/C_{p^{i}}}\to \mathbb{S}\in\Sp\]
along the adjunction 
    \[\begin{tikzcd}
	{\Sp^{BC_{p^n}/C_{p^{i}}}} &[7em] {\Sp^{BC_{p^n}/C_{p^{i}}}} &[3em] {\Sp.}
	\arrow[""{name=0, anchor=center, inner sep=0}, "{\tilde{B}C_{p^i}/C_{p^{i-1}}\otimes -}", shift left=2, from=1-1, to=1-2]
	\arrow[""{name=1, anchor=center, inner sep=0}, "{\Hom(\tilde{B}C_{p^i}/C_{p^{i-1}}, -)}", shift left=2, from=1-2, to=1-1]
	\arrow[""{name=2, anchor=center, inner sep=0}, "{(-)_{hC_{p^n}/C_{p^{i}}}}", shift left=2, from=1-2, to=1-3]
	\arrow[""{name=3, anchor=center, inner sep=0}, "{\mathrm{trivial}}", shift left=2, from=1-3, to=1-2]
	\arrow["\dashv"{anchor=center, rotate=-90}, draw=none, from=0, to=1]
	\arrow["\dashv"{anchor=center, rotate=-90}, draw=none, from=2, to=3]
    \end{tikzcd}\]
Taking the coproduct for all $1\leqslant i \leqslant n$, we get 
\[\bigoplus_{i=1}^n(\tilde{B}C_{p^i}/C_{p^{i-1}}\otimes BC_{p^i}/C_{p^{i-1}})_{hC_{p^n}/C_{p^{i}}}\to \mathbb{S}\in\Sp.\]
We denote 
\[\tilde{R}_{C_{p^n}}:=\bigoplus_{i=1}^n(\tilde{B}C_{p^i}/C_{p^{i-1}}\otimes BC_{p^i}/C_{p^{i-1}})_{hC_{p^n}/C_{p^{i}}}.\]
\end{con}

\begin{thm}\label[thm]{thm:computation_of_RCpn}
    Under the notation of \cref{con:tilde_RCpn}, 
    \[R_{C_{p^n}}\simeq \mathrm{Free}_{\mathbb{E}_0}^{\mathbb{E}_1}(\mathbb{S}\to\mathrm{cof}(\tilde{R}_{C_{p^n}}\to\mathbb{S})).\]
    Here, $\mathrm{Free}_{\mathbb{E}_0}^{\mathbb{E}_1}$ denotes the left adjoint to the forgetful functor 
    \[\Alg_{\mathbb{E}_1}(\Sp)\to\Alg_{\mathbb{E}_0}(\Sp)\simeq \Sp_{\mathbb{S}/}.\]
\end{thm}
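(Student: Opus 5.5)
The plan is to compare both sides as categories over $\Sp$ and then invoke \cref{lem:monadicity}. A module over $\mathrm{Free}_{\mathbb{E}_0}^{\mathbb{E}_1}(\mathbb{S}\to C)$, with $C:=\mathrm{cof}(\tilde{R}_{C_{p^n}}\to\mathbb{S})$, is the same as a spectrum $X$ together with an $\mathbb{E}_0$-module structure. That is, it is a map $C\otimes X\to X$ whose restriction along the unit $\mathbb{S}\otimes X\to C\otimes X$ is identified with $\mathrm{id}_X$. Since $C$ is a cofiber, this is equivalently a null homotopy of the composite
\[\tilde{R}_{C_{p^n}}\otimes X\to \mathbb{S}\otimes X\simeq X.\]
Concretely, $\Mod_{R_{C_{p^n}}}(\Sp)$ should be the pullback of $\Sp\to\Fun(\Delta^1,\Sp)$, $X\mapsto(\tilde R\otimes X\to X)$, along the functor $\Sp\to\Fun(\Delta^1,\Sp)$ sending $X$ to the zero map. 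I would state and check this description of modules over a free $\mathbb{E}_1$-algebra on an $\mathbb{E}_0$-algebra as a preliminary lemma, using that $\Mod$ over a free algebra is modules for the corresponding free monad. So it suffices to show that $\Sp_{C_{p^n}}^{\mathrm{const}}$ is the same pullback over $\Sp$, compatibly with the forgetful functor.

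Starting from \cref{cor:obstruction_for_Cpn_on_const}, I would apply \cref{lem:cofiber} in each factor $\EuScript{C}=\Sp^{BC_{p^n}/C_{p^i}}$. The datum of a lift from $\Lambda_2^2$ to $\Delta^2$ of $(X_{hC_{p^i}/C_{p^{i-1}}}\to X^{hC_{p^i}/C_{p^{i-1}}}\leftarrow X)$ is then a null homotopy of
\[X_{hC_{p^i}/C_{p^{i-1}}}\to \mathrm{cof}(X\to X^{hC_{p^i}/C_{p^{i-1}}}),\]
which is a pullback along $\Fun(\EuScript{E}_1,-)\to\Fun(\EuScript{E}_2,-)$. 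Next, $\mathrm{cof}(X\to X^{h})\simeq \Hom(\tilde B,\mathbb{S})\otimes X$ for finite-type reasons. I would check this carefully, or rather phrase it as $\Hom(\tilde{B}C_{p^i}/C_{p^{i-1}},X)$ with $X$ trivial, which is exact in $X$. With that rewriting, \cref{lem:adjoint_pair} applied to the adjunctions $\tilde{B}\otimes-\dashv\Hom(\tilde{B},-)$ and $(-)_{hC_{p^n}/C_{p^i}}\dashv\mathrm{triv}$ moves everything to nonequivariant spectra. The datum becomes a null homotopy, in $\Sp$, of the adjoint map
\[(\tilde B\otimes BC_{p^i}/C_{p^{i-1}})_{hC_{p^n}/C_{p^i}}\otimes X\to X,\]
which is exactly the map produced in \cref{con:tilde_RCpn} tensored with $X$. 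Here one uses that trivial actions commute with $\otimes X$ and orbits, and that $X_{hC_{p^i}/C_{p^{i-1}}}\simeq BC_{p^i}/C_{p^{i-1}}\otimes X$.

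I would use $K=\{0,1\}\to\ast$ (the $\EuScript{E}_2\to\EuScript{E}_1$ pattern) together with \cref{lem:coprod} with $n$ copies. This combines the $n$ separate null homotopies, one for each $i$, into a single null homotopy of the map out of the direct sum $\tilde R_{C_{p^n}}\otimes X\to X$. The result is precisely the pullback description of $\Mod_{R}$ from the first step, and uniqueness in \cref{thm:morita_description} finishes the argument.

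The main obstacle is bookkeeping. I must verify that the composite of the pullback squares really matches the module pullback, including that the maps agree: the norm composed with the collapse, and the adjunction units. A second point is that the identification of the cofiber of $X\to X^{hC_{p^i}/C_{p^{i-1}}}$ with $\Hom(\tilde B,X)$ must be natural in $X$. I would handle the latter by writing $X^{h}=\Hom(BC_{p^i}/C_{p^{i-1}},X)$ via $\Hom(\Sigma^\infty_+B,-)$ and taking the fiber of the collapse map. This makes the relevant map $\Hom(BC_{p^i}/C_{p^{i-1}},X)\to\Hom(\tilde B,X)$, and its fiber is $X$.
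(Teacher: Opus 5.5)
Your proposal is correct and follows the paper's proof essentially step for step. The shared steps are: \cref{cor:obstruction_for_Cpn_on_const} with \cref{lem:cofiber} in each factor; the rewriting $\mathrm{cof}(X\to X^{hC_{p^i}/C_{p^{i-1}}})\simeq\Hom(\tilde{B}C_{p^i}/C_{p^{i-1}},X)$; \cref{lem:adjoint_pair} for the composite adjunction; \cref{lem:coprod} to merge the $n$ null homotopies; and the module description of a free $\mathbb{E}_1$-algebra on an $\mathbb{E}_0$-algebra, which the paper gets from the dual of \cref{lem:cofiber} plus a citation of Hill rather than proving it.

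Two small fixes are needed. First, the literal pullback of two functors $\Sp\to\Fun(\Delta^1,\Sp)$ in your first paragraph is not the module category, since it carries an extra self-equivalence of $\tilde{R}_{C_{p^n}}\otimes X$. The right encoding is $\Sp\times_{\Fun(\EuScript{E}_2,\Sp)}\Fun(\EuScript{E}_1,\Sp)$, which is what your later steps actually produce. Second, the ``finite type'' identification with $\Hom(\tilde{B},\mathbb{S})\otimes X$ fails in general because $\tilde{B}$ is not finite, so the $\Hom(\tilde{B},X)$ formulation you fall back on is the one to use.
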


\begin{proof}
    We first compose the pullbacks in \cref{cor:obstruction_for_Cpn_on_const,lem:cofiber}:
    \[\begin{tikzcd}
	{\Sp_{C_{p^n}}^{\mathrm{const}}} & {\displaystyle\prod_{i=1}^{n}\Fun(\EuScript{E}_1,\Sp^{BC_{p^n}/C_{p^i}})} \\
	{\Sp} & {\displaystyle\prod_{i=1}^{n}\Fun(\EuScript{E}_2,\Sp^{BC_{p^n}/C_{p^i}}).}
	\arrow[from=1-1, to=1-2]
	\arrow[from=1-1, to=2-1]
	\arrow[from=1-2, to=2-2]
  	\arrow[from=2-1, to=2-2]
    \end{tikzcd}\]

    Here, the $i$-th component of the lower arrow sends a spectrum $X\in\Sp$ to 
    \[BC_{p^i}/C_{p^{i-1}}\otimes X\to \Hom(\tilde{B}C_{p^i}/C_{p^{i-1}},X)\in\Sp^{BC_{p^n}/C_{p^{i}}}\]
    and the zero map where $X$ is equipped with the trivial $C_{p^n}/C_{p^{i}}$ action. This factors through the pullback 
    \[\begin{tikzcd}
	{\Fun(\EuScript{E}_2,\Sp^{BC_{p^n}/C_{p^{i}}})\times_{\Sp^{BC_{p^n}/C_{p^{i}}}}\Sp} & {\Sp} \\
	{\Fun(\EuScript{E}_2,\Sp^{BC_{p^n}/C_{p^{i}}})} & {\Sp^{BC_{p^n}/C_{p^{i}}},}
	\arrow[from=1-1, to=1-2]
	\arrow[from=1-1, to=2-1]
	\arrow["{\Hom(\tilde{B}C_{p^i}/C_{p^{i-1}}, -)}", from=1-2, to=2-2]
	\arrow["{\mathrm{ev}_1}", from=2-1, to=2-2]
    \end{tikzcd}\]
    so we have a pullback diagram 
    \[\begin{tikzcd}
	{\Sp_{C_{p^n}}^{\mathrm{const}}} & {\displaystyle\prod_{i=1}^{n}\Fun(\EuScript{E}_1,\Sp^{BC_{p^n}/C_{p^{i}}})\times_{\Sp^{BC_{p^n}/C_{p^{i}}}}\Sp} \\
	{\Sp} & {\displaystyle\prod_{i=1}^{n}\Fun(\EuScript{E}_2,\Sp^{BC_{p^n}/C_{p^{i}}})\times_{\Sp^{BC_{p^n}/C_{p^{i}}}}\Sp.}
	\arrow[from=1-1, to=1-2]
	\arrow[from=1-1, to=2-1]
	\arrow[from=1-2, to=2-2]
  	\arrow[from=2-1, to=2-2]
    \end{tikzcd}\]

    By applying \cref{lem:adjoint_pair} to the adjunction 
    \[\begin{tikzcd}
	{\Sp^{BC_{p^n}/C_{p^{i}}}} &[7em] {\Sp^{BC_{p^n}/C_{p^{i}}}} &[3em] {\Sp,}
	\arrow[""{name=0, anchor=center, inner sep=0}, "{\tilde{B}C_{p^i}/C_{p^{i-1}}\otimes -}", shift left=2, from=1-1, to=1-2]
	\arrow[""{name=1, anchor=center, inner sep=0}, "{\Hom(\tilde{B}C_{p^i}/C_{p^{i-1}}, -)}", shift left=2, from=1-2, to=1-1]
	\arrow[""{name=2, anchor=center, inner sep=0}, "{(-)_{hC_{p^n}/C_{p^{i}}}}", shift left=2, from=1-2, to=1-3]
	\arrow[""{name=3, anchor=center, inner sep=0}, "{\mathrm{trivial}}", shift left=2, from=1-3, to=1-2]
	\arrow["\dashv"{anchor=center, rotate=-90}, draw=none, from=0, to=1]
	\arrow["\dashv"{anchor=center, rotate=-90}, draw=none, from=2, to=3]
    \end{tikzcd}\]
    we get a pullback diagram
    \[\begin{tikzcd}
	{\Sp_{C_{p^n}}^{\mathrm{const}}} & {\displaystyle\prod_{i=1}^{n}\Sp^{BC_{p^n}/C_{p^{i}}}\times_{\Sp}\Fun(\EuScript{E}_1,\Sp)} \\
	{\Sp} & {\displaystyle\prod_{i=1}^{n}\Sp^{BC_{p^n}/C_{p^{i}}}\times_{\Sp}\Fun(\EuScript{E}_2,\Sp),}
	\arrow[from=1-1, to=1-2]
	\arrow[from=1-1, to=2-1]
	\arrow[from=1-2, to=2-2]
  	\arrow[from=2-1, to=2-2]
    \end{tikzcd}\]
    which further induces a pullback diagram
    \[\begin{tikzcd}
	{\Sp_{C_{p^n}}^{\mathrm{const}}} & {\displaystyle\prod_{i=1}^{n}\Fun(\EuScript{E}_1,\Sp)} \\
	{\Sp} & {\displaystyle\prod_{i=1}^{n}\Fun(\EuScript{E}_2,\Sp)}
	\arrow[from=1-1, to=1-2]
	\arrow[from=1-1, to=2-1]
	\arrow[from=1-2, to=2-2]
  	\arrow[from=2-1, to=2-2]
    \end{tikzcd}\]
    by composing with the pullback diagram 
    \[\begin{tikzcd}
	{\Sp^{BC_{p^n}/C_{p^{i}}}\times_{\Sp}\Fun(\EuScript{E}_1,\Sp)} & {\Fun(\EuScript{E}_1,\Sp)} \\
	{\Sp^{BC_{p^n}/C_{p^{i}}}\times_{\Sp}\Fun(\EuScript{E}_2,\Sp)} & {\Fun(\EuScript{E}_2,\Sp).}
	\arrow[from=1-1, to=1-2]
	\arrow[from=1-1, to=2-1]
	\arrow[from=1-2, to=2-2]
	\arrow[from=2-1, to=2-2]
    \end{tikzcd}\]

    By unwinding the definition, the $i$-th component of the lower arrow sends $X\in\Sp$ to 
    \[(\tilde{B}C_{p^i}/C_{p^{i-1}}\otimes BC_{p^i}/C_{p^{i-1}})_{hC_{p^n}/C_{p^{i}}}\otimes X\to X\]
    and the zero map, so the lower arrow further factors through 
    \[\Fun(\EuScript{E}_{2,n},\Sp),\]
    which induces a pullback diagram 
    \[\begin{tikzcd}
	{\Sp_{C_{p^n}}^{\mathrm{const}}} & {\Fun(\EuScript{E}_{1,n},\Sp)} \\
	{\Sp} & {\Fun(\EuScript{E}_{2,n},\Sp).}
	\arrow[from=1-1, to=1-2]
	\arrow[from=1-1, to=2-1]
	\arrow[from=1-2, to=2-2]
  	\arrow[from=2-1, to=2-2]
    \end{tikzcd}\]

    Composing with the pullback diagram in \cref{lem:coprod}, we have a pullback diagram
    \[\begin{tikzcd}
	{\Sp_{C_{p^n}}^{\mathrm{const}}} & {\Fun(\EuScript{E}_{1},\Sp)} \\
	{\Sp} & {\Fun(\EuScript{E}_{2},\Sp).}
	\arrow[from=1-1, to=1-2]
	\arrow[from=1-1, to=2-1]
	\arrow[from=1-2, to=2-2]
  	\arrow[from=2-1, to=2-2]
    \end{tikzcd}\]
    Here, the lower arrow sends $X\in \Sp$ to 
    \[\tilde{R}_{C_{p^n}}\otimes X\to X\]
    and the zero map. This factors through as 
    \[\Sp \to \Fun(\Lambda_0^2,\Sp) \to \Fun(\EuScript{E}_{2},\Sp).\]
    Here,
    \begin{itemize}
        \item[(1)] \[\Lambda_0^2:= \left\{      \begin{tikzcd}
	0 &[-1em] &[-1em] 1 \\
	& 2
	\arrow[from=1-1, to=1-3]
	\arrow[from=1-1, to=2-2]
    \end{tikzcd}\right\}\to\Delta^2\]
    denotes the $0$-th horn in $\Delta^2$,
    \item[(2)] the first arrow sends $X\in \Sp $ to 
    \[\begin{tikzcd}
	X &[-1em] &[-1em] {{\mathrm{cof}(\tilde{R}_{C_{p^n}}\to\mathbb{S}) \otimes X}} \\
	& {X,}
	\arrow[from=1-1, to=1-3]
	\arrow[equals, from=1-1, to=2-2]
    \end{tikzcd}\]
    \item[(3)] and the last arrow sends $Y\in\Fun(\Lambda_0^2,\Sp) $ to
    \[\mathrm{fib}(Y(0)\to Y(1))\to Y(0) \to Y(2)\]
    and the zero map.
    \end{itemize}

    By the dual version of \cref{lem:cofiber}, we finally get a pullback diagram 
    \[\begin{tikzcd}
	{\Sp_{C_{p^n}}^{\mathrm{const}}} & {\Fun(\Delta^2,\Sp)} \\
	{\Sp} & {\Fun(\Lambda_0^2,\Sp).}
	\arrow[from=1-1, to=1-2]
	\arrow[from=1-1, to=2-1]
	\arrow[from=1-2, to=2-2]
  	\arrow[from=2-1, to=2-2]
    \end{tikzcd}\]
    
    This proves that 
    \[\Sp_{C_{p^n}}^{\mathrm{const}}\simeq \Mod_{\mathrm{Free}_{\mathbb{E}_0}^{\mathbb{E}_1}(\mathbb{S}\to\mathrm{cof}(\tilde{R}_{C_{p^n}}\to\mathbb{S}))}(\Sp)\]
    as categories over $\Sp$ by \cite[Definition 4.7, Theorem 5.3]{hill2021ekpushoutsek1tensors}.
\end{proof}

Based on \cref{thm:computation_of_RCpn}, we deduce some properties of the ring $R_{C_{p^n}}$.

\begin{prop}\label[prop]{prop:prop_of_RCpn}
    \begin{itemize}
    \item[(1)] $R_{C_{p^n}}$ is connective.
    \item[(2)] The unit map $\mathbb{S}\to R_{C_{p^n}}$ induces an equivalence on the $0$-th homotopy groups
    \[\pi_0(\mathbb{S})\simeq \mathbb{Z}\xrightarrow{\simeq} \pi_0(R_{C_{p^n}}).\]
    \item[(3)] The unit map $\mathbb{S}\to R_{C_{p^n}}$ induces an equivalence on the $p$-inversions 
    \[\mathbb{S}[p^{-1}]\xrightarrow{\simeq} R_{C_{p^n}}[p^{-1}].\]
    \end{itemize}
\end{prop}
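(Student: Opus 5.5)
Write $C:=\mathrm{cof}(\tilde{R}_{C_{p^n}}\to\mathbb{S})$. The plan is to read everything off the cofiber sequence $\mathbb{S}\to C\to\Sigma\tilde{R}_{C_{p^n}}$, whose first map is the $\mathbb{E}_0$-structure map. Then \cref{thm:computation_of_RCpn} turns this into statements about $R_{C_{p^n}}$.

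The key input is a standard description of free $\mathbb{E}_1$-algebras on $\mathbb{E}_0$-algebras. For $\mathbb{S}\to C$ with cofiber $V$, the free $\mathbb{E}_1$-algebra $\mathrm{Free}_{\mathbb{E}_0}^{\mathbb{E}_1}(\mathbb{S}\to C)$ carries an exhaustive filtration. Its associated graded is the tensor algebra on $V$, i.e.\ $\bigoplus_{k\geq 0}V^{\otimes k}$; equivalently the filtration quotients are $V^{\otimes k}$. This can be seen from the bar-type/James filtration, or by writing the $\mathbb{E}_0$-algebra as a pushout $\mathbb{S}\sqcup_{\mathrm{Free}_{\mathbb{E}_0}(\tilde{R})}\mathrm{Free}_{\mathbb{E}_0}(0)$. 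The free functor preserves this pushout, which exhibits $R_{C_{p^n}}$ as the pushout of $\mathbb{E}_1$-rings $\mathbb{S}\leftarrow T(\tilde{R})\to\mathbb{S}$, where $T$ is the free $\mathbb{E}_1$-algebra and the two maps are the augmentation and the map induced by $\tilde{R}\to\mathbb{S}$. In either form, the unit $\mathbb{S}\to R_{C_{p^n}}$ is the inclusion of filtration $0$, and the cofiber of the unit has a filtration with graded pieces $V^{\otimes k}$ for $k\geq 1$, where $V=\Sigma\tilde{R}_{C_{p^n}}$.

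It then suffices to study $V$.

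\textbf{Connectivity.} $\tilde{B}C_{p^i}/C_{p^{i-1}}=\mathrm{fib}(\Sigma^\infty_+BC_p\to\mathbb{S})\simeq\Sigma^\infty BC_p$ is $0$-connected, since $\pi_0$ of the collapse map is an isomorphism. Also $\Sigma^\infty_+BC_p$ is connective, and homotopy orbits preserve connectivity. Hence $\tilde{R}_{C_{p^n}}$ is $0$-connected (homotopy in degrees $\geq 1$), so $V$ is $1$-connected, i.e.\ concentrated in degrees $\geq 2$. Consequently each $V^{\otimes k}$ with $k\geq 1$ is concentrated in degrees $\geq 2$.

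\textbf{Proof of (1) and (2).} The cofiber of the unit is therefore concentrated in degrees $\geq 2$. Passing to the colimit of the exhaustive filtration preserves this bound, so (1) follows. For (2), the long exact sequence of the cofiber sequence $\mathbb{S}\to R_{C_{p^n}}\to\mathrm{cof}$ gives $\pi_0\mathbb{S}\cong\pi_0 R_{C_{p^n}}$, and indeed $\pi_1\mathbb{S}\cong\pi_1 R_{C_{p^n}}$ as well.

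\textbf{Proof of (3).} $\Sigma^\infty BC_p$ is $p$-power torsion: $\tilde{H}_*(BC_p;\mathbb{Z})$ is $p$-torsion, so $\Sigma^\infty BC_p[p^{-1}]\simeq 0$. Since $p$-inversion is smashing, $\tilde{B}C_{p^i}/C_{p^{i-1}}\otimes BC_{p^i}/C_{p^{i-1}}$ becomes zero after inverting $p$, and so do its homotopy orbits, because $(-)[p^{-1}]$ commutes with colimits. Thus $V[p^{-1}]=0$, so each $V^{\otimes k}[p^{-1}]=0$ for $k\geq 1$. Hence the cofiber of the unit vanishes after inverting $p$, and $\mathbb{S}[p^{-1}]\simeq R_{C_{p^n}}[p^{-1}]$.

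The main obstacle is justifying the filtration of the free $\mathbb{E}_1$-algebra on an $\mathbb{E}_0$-algebra with graded pieces $V^{\otimes k}$. The first approach to try is the pushout presentation via \cite{hill2021ekpushoutsek1tensors}, using the fact that the free $\mathbb{E}_1$ functor is a left adjoint. One then applies the bar construction $\mathbb{S}\otimes_{T(\tilde{R})}\mathbb{S}$, which is computed as the geometric realization of $\mathbb{S}\otimes T(\tilde{R})^{\otimes\bullet}\otimes\mathbb{S}$. Its skeletal filtration yields the same connectivity and $p$-local vanishing conclusions directly, without identifying the graded pieces exactly.
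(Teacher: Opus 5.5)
Your strategy is sound and differs from the paper's. However, the justification you single out for the key filtration does not work.

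Write $\tilde R$ for $\tilde R_{C_{p^n}}$. A pushout in $\Alg_{\mathbb{E}_1}(\Sp)$ is not a relative tensor product. From the cofiber sequence of right $T(\tilde R)$-modules $\tilde R\otimes T(\tilde R)\to T(\tilde R)\to\mathbb{S}$ one gets $\mathbb{S}\otimes_{T(\tilde R)}\mathbb{S}\simeq\mathrm{cof}(\tilde R\to\mathbb{S})=C$. So the bar construction only returns the $\mathbb{E}_0$-algebra you started with, not $R_{C_{p^n}}$. (If $\tilde R\to\mathbb{S}$ were null, the $\mathbb{E}_1$-pushout would be $T(V)$, whereas the bar construction is $\mathbb{S}\oplus V$.) Its skeletal filtration therefore says nothing about $R_{C_{p^n}}$.

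The filtration you want is nonetheless correct. The right source is the operadic left Kan extension formula \cite[Section 3.1.3]{HA} for $\mathbb{E}_0^{\otimes}\to\mathbb{E}_1^{\otimes}$. It presents the underlying spectrum of $\mathrm{Free}_{\mathbb{E}_0}^{\mathbb{E}_1}(\mathbb{S}\to C)$ as the colimit of $C^{\otimes k}$ over finite linearly ordered sets and order-preserving injections, acting by unit insertions. Filter by cardinality. The latching object at a $k$-element set is the colimit over its proper subsets, so the $k$-th graded piece is the total cofiber of the cube $S\mapsto C^{\otimes S}$, namely $V^{\otimes k}$, and the $0$-th stage is the unit. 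This is the James filtration you mention, and with it the rest of your argument goes through as written.

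As for the comparison: the paper uses only the coarse consequence that the underlying spectrum is built from $C$ by colimits and tensor powers, which gives (1). For (2) and (3) it argues formally. The functors $\pi_0\colon\Sp_{\geqslant 0}\to\EuScript{A}b$ and $(-)[p^{-1}]$ are symmetric monoidal and colimit-preserving, hence commute with $\mathrm{Free}_{\mathbb{E}_0}^{\mathbb{E}_1}$. They carry $\mathbb{S}\to C$ to an initial $\mathbb{E}_0$-algebra ($\mathbb{Z}\xrightarrow{=}\mathbb{Z}$, resp.\ $\mathbb{S}[p^{-1}]\xrightarrow{\simeq}C[p^{-1}]$), whose free $\mathbb{E}_1$-algebra is the unit. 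That route never identifies graded pieces.

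Your route treats all three parts uniformly and proves more: $\mathrm{cof}(\mathbb{S}\to R_{C_{p^n}})$ is $1$-connected, so the unit is also an isomorphism on $\pi_1$. Your pushout presentation $R_{C_{p^n}}\simeq\mathbb{S}\sqcup_{T(\tilde R)}\mathbb{S}$ is also usable in the paper's formal style, for instance because $(-)[p^{-1}]$ preserves it and kills $\tilde R$. It just cannot be computed as a relative tensor product.
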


\begin{proof}
    Note that the relative free algebra functor $\mathrm{Free}_{\mathbb{E}_0}^{\mathbb{E}_1}$
    can be computed as the standard relative bar construction, so the underlying spectrum of $R_{C_{p^n}}$ is equivalent to a composition of colimits and tensor powers of 
    \[\mathrm{cof}(\tilde{R}_{C_{p^n}}\to\mathbb{S}).\]

    To see (1), since connective spectra are closed under colimits and tensor products, it suffices to show that 
    \[\mathrm{cof}(\tilde{R}_{C_{p^n}}\to\mathbb{S})\]
    is connective. 

    Since $C_{p^n}/C_{p^{i-1}}$ acts trivially on $\mathbb{S}$, 
    \[BC_{p^i}/C_{p^{i-1}}\simeq \mathbb{S}_{hC_{p^i}/C_{p^{i-1}}}\]
    is connective and the natural (non-equivariant) map on the underlying spectra
    \[\mathbb{S}\to BC_{p^i}/C_{p^{i-1}}\]
    induces an equivalence on $\pi_0$. Further composing with the collapsing map, note that the composition 
    \[\mathbb{S}\to BC_{p^i}/C_{p^{i-1}} \to \mathbb{S}\]
    is induced by applying $\Sigma_+^{\infty}$ to the composition of maps of anima
    \[*\to BC_{p^i}/C_{p^{i-1}}\to *.\]
    In particular, the composition is the identity on $\mathbb{S}$, and thus the collapsing map 
    \[BC_{p^i}/C_{p^{i-1}} \to \mathbb{S}\]
    also induces an equivalence on $\pi_0$. Therefore, the fiber 
    \[\tilde{B}C_{p^i}/C_{p^{i-1}}\]
    is $1$-connective and so is 
    \[\tilde{R}_{C_{p^n}}:=\bigoplus_{i=1}^n(\tilde{B}C_{p^i}/C_{p^{i-1}}\otimes BC_{p^i}/C_{p^{i-1}})_{hC_{p^n}/C_{p^{i}}}.\]
    This implies that the cofiber 
    \[\mathrm{cof}(\tilde{R}_{C_{p^n}}\to\mathbb{S})\]
    is connective and proves (1).

    For (2), note that 
    \[\pi_0:\Sp_{\geqslant0}\to\EuScript{A}b\]
    is symmetric monoidal and colimit-preserving, so it commutes with $\mathrm{Free}_{\mathbb{E}_0}^{\mathbb{E}_1}$. In particular, the induced map of the unit of $R_{C_{p^n}}$ on $\pi_0$ coincides with the unit of 
    \[\mathrm{Free}_{\mathbb{E}_0}^{\mathbb{E}_1}\left(\pi_0\mathbb{S}\to \pi_0\mathrm{cof}(\tilde{R}_{C_{p^n}}\to\mathbb{S})\right)\in\Alg_{\mathbb{E}_1}(\EuScript{A}b)\]

    Since $\tilde{R}_{C_{p^n}}$ is $1$-connective, $\pi_0$ sends the $\mathbb{E}_0$-algebra 
    \[\mathbb{S}\to \mathrm{cof}(\tilde{R}_{C_{p^n}}\to\mathbb{S})\]
    to 
    \[(\mathbb{Z}\to \mathbb{Z}) \in\Alg_{\mathbb{E}_0}(\EuScript{A}b).\]
    Thus, statement (2) follows because the left adjoint $\mathrm{Free}_{\mathbb{E}_0}^{\mathbb{E}_1}$ preserves initial objects, and the initial objects in both categories are the respective units.

    For (3), since the $p$-inversion functor 
    \[(-)[p^{-1}]:\Sp\to \Mod_{\mathbb{S}[p^{-1}]}(\Sp)\]
    is also symmetric monoidal and colimit-preserving, by a similar argument as for (2), it suffices to show that $\tilde{R}_{C_{p^n}}$ is trivial after $p$-inversion. 

    Since the higher group homology vanishes after inverting $p$, the homotopy-orbit spectral sequence
    shows that the natural non-equivariant map on the underlying spectra
    \[\mathbb{S}\to BC_{p^i}/C_{p^{i-1}}\]
    is an equivalence after inverting $p$, and so is the collapsing map
    \[BC_{p^i}/C_{p^{i-1}} \to \mathbb{S}.\]
    This implies that $\tilde{B}C_{p^i}/C_{p^{i-1}}$ is trivial after inverting $p$, and so is $\tilde{R}_{C_{p^n}}$.
\end{proof}

As a corollary, we can recover the two functors constructed in \cref{exmp:Z-linear_spectra,exmp:|G|-invertible_spectra}.

\begin{cor}\label[cor]{cor:cor_of_RCpn}
    Let $A\in\Alg_{\mathbb{E}_1}(\Sp)$ be an $\mathbb{E}_1$-ring spectrum that is 
    \begin{itemize}
        \item either discrete
        \item or $p$-invertible.
    \end{itemize}
    Then there exists a unique functor over $\Sp$
    \[\begin{tikzcd}
	{\Mod_{A}(\Sp)} && {\Sp_{C_{p^n}}^{\mathrm{const}}} \\
	& {\Sp.}
	\arrow[dashed, from=1-1, to=1-3]
	\arrow[from=1-1, to=2-2]
	\arrow[from=1-3, to=2-2]
    \end{tikzcd}\]
\end{cor}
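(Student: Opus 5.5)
By \cref{lem:monadicity} and \cref{thm:morita_description}, functors $\Mod_A(\Sp)\to\Sp_{C_{p^n}}^{\mathrm{const}}\simeq\Mod_{R_{C_{p^n}}}(\Sp)$ over $\Sp$ correspond to points of the mapping anima $\Map_{\Alg_{\mathbb{E}_1}(\Sp)}(R_{C_{p^n}},A)$. The plan is therefore to show that this anima is contractible in both cases, using the explicit presentation of \cref{thm:computation_of_RCpn}. By the free--forgetful adjunction, it is equivalent to
\[\Map_{\Sp_{\mathbb{S}/}}\bigl(\mathbb{S}\to\mathrm{cof}(\tilde{R}_{C_{p^n}}\to\mathbb{S}),\ \mathbb{S}\to A\bigr).\]
Since $\mathrm{cof}(\tilde{R}_{C_{p^n}}\to\mathbb{S})$ is the pushout of $\mathbb{S}\leftarrow\tilde{R}_{C_{p^n}}\to 0$, maps under $\mathbb{S}$ out of it into $A$ are the space of null homotopies of the composite $\tilde{R}_{C_{p^n}}\to\mathbb{S}\to A$. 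This space is either empty or a torsor for $\Map_{\Sp}(\Sigma\tilde{R}_{C_{p^n}},A)$. It therefore suffices to show that $F(\tilde{R}_{C_{p^n}},A)\simeq 0$, since then both the obstruction $\pi_0\Map(\tilde{R}_{C_{p^n}},A)$ and the torsor $\Map(\Sigma\tilde{R}_{C_{p^n}},A)$ vanish.

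\emph{$p$-invertible case.} The proof of \cref{prop:prop_of_RCpn}(3) shows $\tilde{R}_{C_{p^n}}[p^{-1}]\simeq 0$. Since $A$ is $p$-local in the sense of being an $\mathbb{S}[p^{-1}]$-module, we get $F(\tilde{R}_{C_{p^n}},A)\simeq F(\tilde{R}_{C_{p^n}}[p^{-1}],A)\simeq 0$.

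\emph{Discrete case.} The proof of \cref{prop:prop_of_RCpn}(1) shows $\tilde{R}_{C_{p^n}}$ is $1$-connective, so maps into the discrete $A$ have $\pi_0\Map(\tilde{R}_{C_{p^n}},A)=0$ and $\pi_0\Map(\Sigma\tilde R_{C_{p^n}},A)=0$. However, the higher homotopy of these anima need not vanish, so vanishing of $F(\tilde R_{C_{p^n}},A)$ is not automatic. I would instead argue differently: by \cref{prop:prop_of_RCpn}(1),(2), $R_{C_{p^n}}$ is connective with $\pi_0R_{C_{p^n}}\cong\mathbb{Z}$. For a discrete $A$, $\mathbb{E}_1$-maps from a connective $\mathbb{E}_1$-ring $R$ factor uniquely through the truncation $\tau_{\leqslant0}R$. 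That is, $\Map_{\Alg_{\mathbb{E}_1}}(R,A)\simeq\Map_{\Alg_{\mathbb{E}_1}}(\pi_0R,A)$, and the latter is the discrete set of ring maps $\mathbb{Z}\to\pi_0A$, which is a point.

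The truncation claim is the step to check carefully. It follows because $\tau_{\leqslant0}:\Sp_{\geqslant0}\to\Sp^{\heartsuit}$ is symmetric monoidal and left adjoint to the inclusion, so it induces a left adjoint on $\mathbb{E}_1$-algebras (\cite[Proposition 7.1.3.15]{HA}). Moreover, $\Alg_{\mathbb{E}_1}$ of an ordinary category is discrete, and a connective $A$ is needed for this to apply; a discrete $A$ is in particular connective. The $p$-invertible case is then uniform with the torsor argument, and uniqueness in both cases is exactly contractibility of the mapping anima.
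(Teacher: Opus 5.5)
Your proof is correct. In the discrete case it is the paper's argument: the paper also uses the adjunction $\pi_0\colon\Alg_{\mathbb{E}_1}(\Sp_{\geqslant 0})\rightleftarrows\Alg_{\mathbb{E}_1}(\EuScript{A}b)$ together with \cref{prop:prop_of_RCpn}(1),(2), so that $\mathbb{E}_1$-maps $R_{C_{p^n}}\to A$ become ring maps out of the initial ring $\mathbb{Z}$.

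In the $p$-invertible case the routes differ. The paper stays at the level of algebras. It uses the adjunction $(-)[p^{-1}]\colon\Alg_{\mathbb{E}_1}(\Sp)\rightleftarrows\Alg_{\mathbb{E}_1}(\Mod_{\mathbb{S}[p^{-1}]}(\Sp))$ and \cref{prop:prop_of_RCpn}(3), which says $R_{C_{p^n}}[p^{-1}]\simeq\mathbb{S}[p^{-1}]$ is initial. You instead go back to the free presentation of \cref{thm:computation_of_RCpn} and identify $\Map_{\Alg_{\mathbb{E}_1}(\Sp)}(R_{C_{p^n}},A)$ with the space of null homotopies of $\tilde{R}_{C_{p^n}}\to\mathbb{S}\to A$. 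Both routes ultimately rest on $\tilde{R}_{C_{p^n}}[p^{-1}]\simeq 0$. The paper's version only invokes the conclusions of \cref{prop:prop_of_RCpn}, which the paper expects to hold for every finite group, so it would transfer unchanged. Yours is more hands-on and gives an explicit criterion: the mapping anima is contractible as soon as $\Map_{\Sp}(\tilde{R}_{C_{p^n}},A)\simeq *$.

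That criterion exposes a mistaken remark in your discrete case. When $A$ is discrete, the anima $\Map_{\Sp}(\tilde{R}_{C_{p^n}},A)$ and $\Map_{\Sp}(\Sigma\tilde{R}_{C_{p^n}},A)$ \emph{are} contractible. Indeed, $\pi_k\Map_{\Sp}(\tilde{R}_{C_{p^n}},A)\cong[\Sigma^k\tilde{R}_{C_{p^n}},A]=0$ for all $k\geqslant 0$, because $\Sigma^k\tilde{R}_{C_{p^n}}$ is $(k+1)$-connective and $A$ is $0$-truncated. Only the negative homotopy groups of the spectrum $F(\tilde{R}_{C_{p^n}},A)$ can be nonzero, and your torsor argument never sees them. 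It needs $\Omega^\infty F(\tilde{R}_{C_{p^n}},A)\simeq *$, not $F(\tilde{R}_{C_{p^n}},A)\simeq 0$.

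So the truncation detour, while valid, is unnecessary. Your first argument already handles both cases uniformly. It in fact gives existence and uniqueness for every coconnective $\mathbb{E}_1$-ring $A$ as well, which the paper's $\pi_0$-adjunction argument does not reach.
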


\begin{proof}
By the universal properties of the two adjunctions 
\[\begin{tikzcd}
	{\Alg_{\mathbb{E}_1}(\Sp_{\geqslant 0})} & {\Alg_{\mathbb{E}_1}(\EuScript{A}b)}
	\arrow[""{name=0, anchor=center, inner sep=0}, "{\pi_0}", shift left, from=1-1, to=1-2]
	\arrow[""{name=1, anchor=center, inner sep=0}, shift left=2, from=1-2, to=1-1]
	\arrow["\dashv"{anchor=center, rotate=-90}, draw=none, from=0, to=1]
    \end{tikzcd}\]
    and 
    \[\begin{tikzcd}
	{\Alg_{\mathbb{E}_1}(\Sp)} & {\Alg_{\mathbb{E}_1}(\Mod_{\mathbb{S}[p^{-1}]}(\Sp)),}
	\arrow[""{name=0, anchor=center, inner sep=0}, "{(-)[p^{-1}]}", shift left, from=1-1, to=1-2]
	\arrow[""{name=1, anchor=center, inner sep=0}, shift left=2, from=1-2, to=1-1]
	\arrow["\dashv"{anchor=center, rotate=-90}, draw=none, from=0, to=1]
    \end{tikzcd}\]
    \cref{prop:prop_of_RCpn} implies that there exists a unique $\mathbb{E}_1$-map from $R_{C_{p^n}}$ to $A$. Then the corollary follows from \cref{lem:monadicity}.
\end{proof}

\begin{rem}
    We expect \cref{prop:prop_of_RCpn,cor:cor_of_RCpn} to be true for all finite groups $G$ by replacing the $p$-inversion with the $|G|$-inversion.
\end{rem}
    
\section{The \texorpdfstring{$\mathbb{Z}$}{}-graded \texorpdfstring{$C_{p^n}$}{}-equivariant Steenrod algebra}\label{section:steenrod_alg}

In this section, as an application, we apply the results from \cref{section:RCpn} to compute the $\mathbb{Z}$-graded $C_{p^n}$-equivariant mod $p$ Steenrod algebra
\[\mathscr{A}_{C_{p^n}}^*:=\pi_{-*}F_{\Sp_{C_{p^n}}}(\underline{\mathbb{F}_p},\underline{\mathbb{F}_p}).\]

Since $\Sp_G^{\mathrm{const}}$ is a full subcategory of $\Sp_G$, \cref{thm:morita_description} identifies the mapping spectrum between two constant spectral $G$-Mackey functors with their mapping spectrum as $R_G$-modules. Thus, once $R_G$ is understood, this mapping spectrum can be computed non-equivariantly. In particular, when $G=C_{p^n}$ is a cyclic $p$-group, we have the following lemma. 

\begin{lem}\label[lem]{lem:mapping_spectra}
    Let $X,Y\in\Sp^{\mathrm{const}}_{C_{p^n}}$ be constant spectral $C_{p^n}$-Mackey functors. Then there is a natural fiber sequence of spectra 
    \[\begin{tikzcd}
	{\Omega^2F_{\Sp}(\tilde{R}_{C_{p^n}}\otimes X, Y)} & {F_{\Sp^{\mathrm{const}}_{C_{p^n}}}(X,Y)} & {F_{\Sp}(X,Y).}
	\arrow[from=1-1, to=1-2]
	\arrow[from=1-2, to=1-3]
    \end{tikzcd}\]
    Here, by abuse of notation, we continue to denote the underlying spectra by $X$ and $Y$.
\end{lem}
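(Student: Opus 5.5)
The plan is to read off the mapping spectrum from the final pullback square in the proof of \cref{thm:computation_of_RCpn}. There we showed
\[\Sp_{C_{p^n}}^{\mathrm{const}}\simeq \Sp\times_{\Fun(\Lambda_0^2,\Sp)}\Fun(\Delta^2,\Sp),\]
where the lower functor sends $X$ to the horn with vertices $X$, $\mathrm{cof}(\tilde{R}_{C_{p^n}}\to\mathbb{S})\otimes X$ and $X$ (the edge $0\to 2$ being the identity). Since mapping spectra in a pullback of stable categories along exact functors are the pullbacks of the mapping spectra, the fiber of
\[F_{\Sp^{\mathrm{const}}_{C_{p^n}}}(X,Y)\to F_{\Sp}(X,Y)\]
is identified with the fiber of the restriction map on mapping spectra
\[F_{\Fun(\Delta^2,\Sp)}(\bar X,\bar Y)\to F_{\Fun(\Lambda_0^2,\Sp)}(\bar X|_{\Lambda_0^2},\bar Y|_{\Lambda_0^2}),\]
where $\bar X,\bar Y$ are the images of $X,Y$ in $\Fun(\Delta^2,\Sp)$.

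Next I compute this fiber by the dual of the computation in \cref{lem:cofiber} and in the proof of \cref{thm:structure_of_genuine_spectra}. Write $\Delta^2$ as the pushout $\{0<1\}\sqcup_{\{1\}}\{1<2\}$ and expand the mapping spectra as iterated pullbacks. Exactly as for the horn $\Lambda_2^2$, the restriction to $\Lambda_0^2$ changes only the term $F(S(0),T(2))$ versus $F(S(1),T(2))$. Dualizing gives that the fiber is
\[\Omega F\bigl(\mathrm{cof}(S(0)\to S(1)),T(2)\bigr).\]
For $S=\bar X$ we have $S(0)=X$ and $S(1)=\mathrm{cof}(\tilde{R}_{C_{p^n}}\to\mathbb{S})\otimes X$, so
\[\mathrm{cof}(S(0)\to S(1))\simeq \Sigma\tilde{R}_{C_{p^n}}\otimes X.\]
With $T(2)=Y$, the fiber becomes
\[\Omega F(\Sigma \tilde R_{C_{p^n}}\otimes X,Y)\simeq \Omega^2F(\tilde R_{C_{p^n}}\otimes X,Y).\]
Naturality in $X$ and $Y$ holds because every identification is functorial.

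The main obstacle is justifying that the last object of $\bar X$, together with its structure maps, is as claimed. In the pullback, the value of $\bar X$ on $\Delta^2$ agrees with the image of $X$ on the horn. So the only extra datum in $\Fun(\Delta^2,\Sp)$ is the edge $1\to 2$ and the filling. These fall out of the fiber computation: the fiber depends only on $S(0)\to S(1)$ and $T(2)$, not on the filler. The key dual fiber computation needs care about signs and shifts, and I would check it exactly as the $\Lambda_2^2$ version was checked in \cref{lem:cofiber}.
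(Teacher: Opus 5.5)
Your argument is correct and is essentially the paper's: both read the fiber off a pullback square from the proof of \cref{thm:computation_of_RCpn}, then compute the fiber of the vertical restriction on mapping spectra by expanding iterated pullbacks. The paper uses the square over $\Fun(\EuScript{E}_2,\Sp)$, where the fiber comes directly from the diagonal $F(\tilde{R}_{C_{p^n}}\otimes X,Y)\to F(\tilde{R}_{C_{p^n}}\otimes X,Y)^{\oplus 2}$; you use the final $\Delta^2/\Lambda_0^2$ square instead, which costs you the dual horn computation $\Omega F(\mathrm{cof}(S(0)\to S(1)),T(2))$ and the identification $\mathrm{cof}(\mathbb{S}\to\mathrm{cof}(\tilde{R}_{C_{p^n}}\to\mathbb{S}))\simeq\Sigma\tilde{R}_{C_{p^n}}$, both of which you get right.
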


\begin{proof}
    As shown in the proof of \cref{thm:computation_of_RCpn}, there is a pullback diagram of categories 
    \[\begin{tikzcd}
	{\Sp_{C_{p^n}}^{\mathrm{const}}} & {\Fun(\EuScript{E}_{1},\Sp)} \\
	{\Sp} & {\Fun(\EuScript{E}_{2},\Sp).}
	\arrow[from=1-1, to=1-2]
	\arrow[from=1-1, to=2-1]
	\arrow[from=1-2, to=2-2]
  	\arrow[from=2-1, to=2-2]
    \end{tikzcd}\]
    which induces a pullback diagram of mapping spectra from $X$ to $Y$. The fiber of the map on mapping spectra induced by the right vertical arrow is equivalent to 
    \[\Omega^2F_{\Sp}(\tilde{R}_{C_{p^n}}\otimes X, Y),\]
    as computed in the proof of \cref{lem:cofiber}. Thus, the lemma follows.
\end{proof}

In the setting of \cref{lem:mapping_spectra}, if $X=Y$, we can further describe the multiplicative structure on the endomorphism spectrum of $X$.

\begin{lem}\label[lem]{lem:E_1_trivial}
    Let $X\in\Sp^{\mathrm{const}}_{C_{p^n}}$ be a constant spectral $C_{p^n}$-Mackey functor and regard $\Omega^2F_{\Sp}(\tilde{R}_{C_{p^n}}\otimes X, X)$ as the fiber of the $\mathbb{E}_1$-map $F_{\Sp^{\mathrm{const}}_{C_{p^n}}}(X,X)\to F_{\Sp}(X,X)$, thereby equipping it with a non-unital $\mathbb{E}_1$-structure. Then the induced multiplication on homotopy groups 
    \[\pi_*(\Omega^2F_{\Sp}(\tilde{R}_{C_{p^n}}\otimes X, X))\otimes\pi_*(\Omega^2F_{\Sp}(\tilde{R}_{C_{p^n}}\otimes X, X))\to\pi_*(\Omega^2F_{\Sp}(\tilde{R}_{C_{p^n}}\otimes X, X))\]
    is the zero map.
\end{lem}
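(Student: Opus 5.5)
The plan is to reduce the statement to a purely diagrammatic one and then prove that one with a Leibniz-rule argument. By the proof of \cref{thm:computation_of_RCpn}, $\Sp^{\mathrm{const}}_{C_{p^n}}$ is the pullback of $\Fun(\Delta^2,\Sp)\to\Fun(\Lambda_0^2,\Sp)$ along $\Sp\to\Fun(\Lambda_0^2,\Sp)$. Hence the $\mathbb{E}_1$-map $F_{\Sp^{\mathrm{const}}_{C_{p^n}}}(X,X)\to F_{\Sp}(X,X)$ is a base change of the $\mathbb{E}_1$-map
\[\mathrm{End}_{\Fun(\Delta^2,\Sp)}(Y)\to\mathrm{End}_{\Fun(\Lambda_0^2,\Sp)}(Y|_{\Lambda_0^2}),\]
where $Y\in\Fun(\Delta^2,\Sp)$ is the image of $X$. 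Base change means the two maps have equivalent fibers as non-unital $\mathbb{E}_1$-algebras. By the dual of the fiber computation in \cref{lem:cofiber}, this common fiber $I$ is $\Omega F(\mathrm{cof}(Y(0)\to Y(1)),Y(2))$, which recovers $\Omega^2F_{\Sp}(\tilde{R}_{C_{p^n}}\otimes X,X)$. So it suffices to prove: for a stable category $\EuScript{C}$ and $Y\in\Fun(\Delta^2,\EuScript{C})$, the product on $\pi_*I$ is zero, where $I$ is the fiber of restriction to the horn on endomorphism spectra.

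To prove this, I would interpret classes of $\pi_*I$ concretely. A class $x\in\pi_kI$ is an endomorphism $\Sigma^kY\to Y$ in $\Fun(\Delta^2,\EuScript{C})$ together with a nullhomotopy $h_x$ of its restriction to $\Lambda_0^2$. In particular, its components $x_0,x_1,x_2$ on the vertices are all null. Using the pushout $\Delta^2=\{0<1\}\sqcup_{\{1\}}\{1<2\}$, the only remaining datum of $x$ is the $2$-cell witnessing commutativity of the triangle, compared against $h_x$. This produces a class in $\pi_*\Omega F(\mathrm{cof}(Y(0)\to Y(1)),Y(2))$; call it the obstruction class $o(x)$. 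The composite $xy$ then comes with the composed nullhomotopy on the horn.

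The key step is a Leibniz formula for the obstruction class of a composite:
\[o(xy)=x_2\cdot o(y)\pm o(x)\cdot \bar y,\]
where $\bar y$ is the map induced by $y$ on $\mathrm{cof}(Y(0)\to Y(1))$. The terms involving the horn components appear only through $x_2$ and through $y$ on the vertices $0,1$. Every term involves a vertex component of $x$ or of $y$, and all such components are null. The nullhomotopies used are exactly the chosen $h_x,h_y$, so the formula gives $o(xy)=0$. Since $o$ is the identification $\pi_*I\cong\pi_*\Omega F(\mathrm{cof}(Y(0)\to Y(1)),Y(2))$, this gives $xy=0$ in $\pi_*I$.

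I expect the main obstacle to be proving this Leibniz rule coherently, rather than as a heuristic about homotopies. My first attempt would be to realize $I$ as the fiber of the filtration-type map $\mathrm{End}(Y)\to\mathrm{End}(Y|_{\Lambda_0^2})$ in the standard pullback formula for mapping spectra. The multiplication on $I$ then factors as
\[I\otimes I\to \mathrm{End}(Y)\otimes I\to I.\]
Here the first map only sees the image of the first factor in the vertex-wise endomorphisms $\prod_j F(Y(j),Y(j))$, and that image is canonically null for classes in $I$. The point I would have to check carefully is that this nullity survives at the level of $\pi_*$, that is, that the bimodule action of $\mathrm{End}(Y)$ on $I$ factors through the vertex-wise endomorphism ring. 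I would verify this from the explicit description $I\simeq\Omega F(\mathrm{cof}(Y(0)\to Y(1)),Y(2))$, on which $\mathrm{End}(Y)$ visibly acts through its components at the vertices $0,1,2$.
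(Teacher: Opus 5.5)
Your argument is correct, and it takes a different route from the paper. The part that actually proves the lemma is your last paragraph.

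\textbf{The paper's route.} The paper never identifies the module structure on the fiber. It inserts $\EuScript{E}_0\to\EuScript{E}_2\to\EuScript{E}_1$ and obtains a fiber sequence of non-unital $\mathbb{E}_1$-algebras. It then notes that the fiber receives a connecting map
\[
\Omega^2F(\tilde{R}_{C_{p^n}}\otimes X,X)^{\oplus 2}\to\Omega^2F(\tilde{R}_{C_{p^n}}\otimes X,X)
\]
which is surjective on $\pi_*$. Its source is a loop object in non-unital $\mathbb{E}_1$-algebras, hence $\mathbb{E}_1$-trivial by the cited result.

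\textbf{Your route.} You use that the product on the fiber $I$ of $\mathrm{End}(Y)\to\mathrm{End}(j^*Y)$, with $j:\Lambda_0^2\to\Delta^2$, is
\[
I\otimes I\xrightarrow{\iota\otimes 1}\mathrm{End}(Y)\otimes I\to I,
\]
and that the left action factors through $\mathrm{ev}_2$. To make this rigorous, proceed as follows:
\begin{enumerate}
\item Write $F(j^*Y,j^*T)\simeq F(j_!j^*Y,T)$, so the fiber is $F(\mathrm{cof}(j_!j^*Y\to Y),T)$.
\item Since $(j_!j^*Y)(2)=Y(1)\sqcup_{Y(0)}Y(2)$, this cofiber is $(0,0,\Sigma C)=i_{2!}\Sigma C$ with $C=\mathrm{cof}(Y(0)\to Y(1))$.
\item Hence the fiber is $F(\Sigma C,T(2))$, naturally in $T\in\Fun(\Delta^2,\EuScript{C})$. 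This gives the factorization of the left $\mathrm{End}(Y)$-action through $\mathrm{End}(Y(2))$.
\item Since $2\in\Lambda_0^2$, the composite $I\to\mathrm{End}(Y)\to\mathrm{End}(Y(2))$ is canonically null. So $I\otimes I\to I$ is null-homotopic as a map of spectra.
\end{enumerate}

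\textbf{What each buys.} Your argument is self-contained: it needs no appeal to $\mathbb{E}_1$-triviality of loop objects and no surjectivity check, and it kills the multiplication map directly. It also makes visible the bimodule structure used later in the proof of \cref{thm:steenrod_alg}(2): left multiplication acts by postcomposition on the target $X$. The paper's argument is purely formal and avoids tracking module structures through the identification of the fiber.

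\textbf{Two fixes to your write-up.}
\begin{itemize}
\item The claim that the whole bimodule action factors through $\prod_j F(Y(j),Y(j))$ is false for the right action on $I\simeq\Omega F(C,Y(2))$. The right action goes through $\bar y$ on $C$, which depends on the homotopy along the edge $0\to 1$; it factors only through $\mathrm{End}(Y|_{\{0<1\}})$. You only need the left action, so state the claim for that side.
\item The two-term Leibniz formula is unnecessary and, as a sum, misleading. In the fiber of an $\mathbb{E}_1$-map the product is given by either one-sided action alone: $xy=\iota(x)\cdot y=x\cdot\iota(y)$. Each of these already vanishes.
\end{itemize}
If you run the same argument on the paper's square $\Fun(\EuScript{E}_1,\Sp)\to\Fun(\EuScript{E}_2,\Sp)$, the fiber is $\Omega^2F(S(0),T(1))$, which depends only on vertex values. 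There both actions are genuinely vertex-wise.
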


\begin{proof}
    The maps of anima 
    \[\varnothing \to *\sqcup *\to *\]
    induce functors of categories
    \[\EuScript{E}_0 \to \EuScript{E}_2\to \EuScript{E}_1\]
    Consider the composition of restrictions 
    \[\begin{tikzcd}
	{\Fun(\EuScript{E}_{1},\Sp)} & {\Fun(\EuScript{E}_{2},\Sp)} & {\Fun(\EuScript{E}_{0},\Sp).}
	\arrow[from=1-1, to=1-2]
	\arrow[from=1-2, to=1-3]
    \end{tikzcd}\]
    Then the induced map on endomorphism spectra defines $\mathbb{E}_1$-maps 
    \[\begin{tikzcd}
	{F(\tilde{R}_{C_{p^n}}\otimes X,\tilde{R}_{C_{p^n}}\otimes X)\displaystyle\bigtimes_{F(\tilde{R}_{C_{p^n}}\otimes X, X)}F(X,X)} \\
	{F(\tilde{R}_{C_{p^n}}\otimes X,\tilde{R}_{C_{p^n}}\otimes X)\displaystyle\bigtimes_{F(\tilde{R}_{C_{p^n}}\otimes X, X)^{\oplus 2}}F(X,X)} \\
	{F(\tilde{R}_{C_{p^n}}\otimes X,\tilde{R}_{C_{p^n}}\otimes X)\bigtimes F(X,X).}
	\arrow[from=1-1, to=2-1]
	\arrow[from=2-1, to=3-1]
    \end{tikzcd}\]

    By taking fibers of the two $\mathbb{E}_1$-maps and their composition, we have a fiber sequence of non-unital $\mathbb{E}_1$-algebras 
    \[\begin{tikzcd}
	\vdots \\
	{\Omega^2 F(\tilde{R}_{C_{p^n}}\otimes X, X)^{\oplus 2}} \\
	{\Omega^{2} F(\tilde{R}_{C_{p^n}}\otimes X, X)} \\
	{\Omega F(\tilde{R}_{C_{p^n}}\otimes X, X)} \\
	{\Omega F(\tilde{R}_{C_{p^n}}\otimes X, X)^{\oplus 2}.}
	\arrow[from=1-1, to=2-1]
	\arrow[from=2-1, to=3-1]
	\arrow[from=3-1, to=4-1]
	\arrow[from=4-1, to=5-1]
    \end{tikzcd}\]

    The non-unital $\mathbb{E}_1$-structure on
    \[\Omega^2 F(\tilde{R}_{C_{p^n}}\otimes X, X)^{\oplus 2}\]
    is the loop of $\Omega F(\tilde{R}_{C_{p^n}}\otimes X, X)^{\oplus 2}$,
    so $\Omega^2 F(\tilde{R}_{C_{p^n}}\otimes X, X)^{\oplus 2}$ is $\mathbb{E}_1$-trivial by \cite[Corollary 2.12]{heuts2024formalitymathbbenalgebrascochainsspheres}. In particular, the induced multiplication on the homotopy groups of \[\Omega^2 F(\tilde{R}_{C_{p^n}}\otimes X, X)^{\oplus 2}\]
    is the zero map.

    Now consider the non-unital $\mathbb{E}_1$-map 
    \[\Omega^2 F(\tilde{R}_{C_{p^n}}\otimes X, X)^{\oplus 2}\to \Omega^2 F(\tilde{R}_{C_{p^n}}\otimes X, X).\]
    By unwinding the definition, the map on the underlying spectra is the cofiber of the diagonal 
    \[\Omega^2 F(\tilde{R}_{C_{p^n}}\otimes X, X)\xrightarrow{\Delta} \Omega^2 F(\tilde{R}_{C_{p^n}}\otimes X, X)^{\oplus 2}.\]
    Therefore, 
    \[\Omega^2 F(\tilde{R}_{C_{p^n}}\otimes X, X)^{\oplus 2}\to \Omega^2 F(\tilde{R}_{C_{p^n}}\otimes X, X)\]
    is surjective on homotopy groups. This completes the proof.
\end{proof}

We now state the main computation of this section, \cref{thm:steenrod_alg}.

\begin{notn}
    Given a graded abelian group $X^*$ and an integer $m$, we denote by $[m]$ the degree shift by $m$ under the cohomological degree convention, i.e., 
    \[(X[m])^*=X^{*+m}.\]

    For example, if an abelian group $M$ is regarded as a graded abelian group concentrated in degree $0$, then $M[m]$ is concentrated in degree $-m$.
\end{notn}

\begin{thm}\label[thm]{thm:steenrod_alg}
    Let $\mathscr{A}^*_{C_{p^n}}$ be the $\mathbb{Z}$-graded $C_{p^n}$-equivariant Steenrod algebra. Then
    \begin{itemize}
        \item[(1)]  $\mathscr{A}^*_{C_{p^n}}$ is a square-zero extension of the subalgebra $\mathbb{F}_p\{1,\beta\}$ generated by the unit and the Bockstein class $\beta$, i.e., 
        $$\mathscr{A}^*_{C_{p^n}}\cong \mathbb{F}_p\{1,\beta\}\oplus I^*_{C_{p^n}}$$
        where $I^*_{C_{p^n}}\subseteq\mathscr{A}^*_{C_{p^n}}$ is a graded ideal with $(I^*_{C_{p^n}})^2=0$. Here, $|\beta|=1$.
        \item[(2)] As an $\mathbb{F}_p\{1,\beta\}$-bimodule, equivalently a left $\mathbb{F}_p\{1,\beta\}\otimes \mathbb{F}_p\{1,\beta\}$-module, $I^*_{C_{p^n}}$ is free, i.e., 
        $$I^*_{C_{p^n}}\cong \bigoplus_{k\in \mathbb{Z}} (\mathbb{F}_p\{1,\beta\}\otimes \mathbb{F}_p\{1,\beta\}[-k])^{\oplus r_k} $$
        for some nonnegative integers $r_k$.
        \item[(3)] $r_k=0$ for $k\leqslant 2$. For $k\geqslant 3$,
        $$r_k=n\cdot \left(\left\lfloor\frac{k-1}{2}\right\rfloor+\sum_{i=1}^{k-4}i\cdot m_{k-2-i}\right)-m_{k-1}$$
        where 
        $$m_l=\begin{cases}
            \#\{\text{Admissible }Sq^{i_1}Sq^{i_2}\cdots Sq^{i_s}\in\mathscr{A}^l: s\geqslant 1, i_1 \text{ even }, i_s>1\} & \text{ if }p=2;\\
            \#\{\text{Admissible }\beta^{\epsilon_0}P^{i_1}\beta^{\epsilon_1}\cdots P^{i_s}\beta^{\epsilon_s}\in\mathscr{A}^l: s\geqslant 1, \epsilon_0=\epsilon_s=0\} & \text{ if }p>2.
        \end{cases}$$
        Here, $\mathscr{A}^*$ denotes the non-equivariant mod $p$ Steenrod algebra.
    \end{itemize}
\end{thm}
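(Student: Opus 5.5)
Apply \cref{lem:mapping_spectra} with $X=Y=\underline{\mathbb{F}}_p$, whose underlying spectrum is $\mathbb{F}_p$. This gives a fiber sequence
\[\Omega^2F_{\Sp}(\tilde{R}_{C_{p^n}}\otimes \mathbb{F}_p,\mathbb{F}_p)\to F_{\Sp_{C_{p^n}}}(\underline{\mathbb{F}}_p,\underline{\mathbb{F}}_p)\to F_{\Sp}(\mathbb{F}_p,\mathbb{F}_p).\]
The right map is $\mathbb{E}_1$. On homotopy, its image is $\mathbb{F}_p\{1,\beta\}$ by Caruso's computation as recalled in \cref{exmp:non-essentially-surjectivity}; for general $n$ we argue as follows. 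The image contains $1$. It also contains $\beta$, since $\beta$ lifts via the constant Mackey functor functor of \cref{exmp:Z-linear_spectra}: the Bockstein is the extension class of $\mathbb{Z}/p^2$. To see the image is no larger, show that the connecting map hits everything else. The long exact sequence reads
\[\cdots\to\mathscr{A}^{*}_{C_{p^n}}\to\mathscr{A}^*\xrightarrow{\partial}H^{*+?}(\tilde{R}_{C_{p^n}};\mathbb{F}_p)\to\cdots,\]
and $\partial$ is the precomposition pairing with the classes in $\tilde{R}_{C_{p^n}}\otimes\mathbb{F}_p$ coming from the map $\tilde R\to\mathbb{S}$. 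The plan is to write $H_*(\tilde{R}_{C_{p^n}};\mathbb{F}_p)$ explicitly. Each summand is $\big(\tilde{B}C_p\otimes BC_p\big)_{hC_{p^n}/C_{p^i}}$, where $C_{p^i}/C_{p^{i-1}}\cong C_p$. Its mod $p$ homology is computed by the homotopy orbit spectral sequence, with $E^2$ given by $H_*(C_{p^{n-i}};\tilde H_*(BC_p)\otimes H_*(BC_p))$ and the action trivial on homology. Collecting the dimensions over the $n$ summands should produce the factor $n\cdot(\lfloor (k-1)/2\rfloor+\sum i\, m_{k-2-i})$ once Künneth-type counting is done.

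Next, establish the square-zero structure and freeness. \cref{lem:E_1_trivial} says the multiplication on the fiber term is zero on homotopy. Let $I^*$ be the image of the fiber in $\mathscr{A}^*_{C_{p^n}}$, which is the kernel of the map to $\mathscr{A}^*$. Then $I^2=0$, because the product of two elements of the image is the image of their product in the fiber, which vanishes. Choose the splitting $\mathbb{F}_p\{1,\beta\}\to\mathscr{A}^*_{C_{p^n}}$ using the $\mathbb{E}_1$-map $R_{C_{p^n}}\to\mathbb{Z}$ of \cref{cor:cor_of_RCpn}. This map makes $\mathbb{F}_p\{1,\beta\}=\pi_{-*}\mathrm{End}_{\mathbb{Z}}(\mathbb{F}_p)$ a subalgebra, and we get (1).

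For (2), the bimodule structure on $I^*$ is induced by the $\mathrm{End}_{\mathbb{Z}}(\mathbb{F}_p)$-bimodule structure on the fiber. The fiber is identified, $\mathbb{Z}$-linearly, with a mapping spectrum between $\mathbb{Z}$-modules: $\Omega^2F_{\mathbb{Z}}(\tilde R\otimes\mathbb{Z}\otimes_{\mathbb{Z}}\mathbb{F}_p,\mathbb{F}_p)$, using the naturality of \cref{lem:mapping_spectra} for $\mathbb{Z}$-modules. Now $\tilde{R}_{C_{p^n}}\otimes\mathbb{Z}$ is a $\mathbb{Z}$-module, hence a sum of shifts of $\mathbb{Z}$ and $\mathbb{Z}/p^j$. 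Since $\mathrm{End}_\mathbb{Z}(\mathbb{F}_p)$-bimodule maps $F_{\mathbb{Z}}(M\otimes\mathbb{F}_p,\mathbb{F}_p)$ are free over $\mathbb{F}_p\{1,\beta\}^{\otimes 2}$ exactly when $M$ is a sum of $\mathbb{Z}/p^j$ pieces with $j\ge2$ or free pieces, freeness needs a check. The kernel of the map to $\mathscr{A}^*$ is the cokernel of $\partial$, modulo the contribution of $\mathscr{A}^*/\{1,\beta\}$.

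The main obstacle is identifying $\partial$ precisely and proving that it is injective on $\mathscr{A}^*/\mathbb{F}_p\{1,\beta\}$ (this accounts for the subtraction $-m_{k-1}$), together with the bimodule freeness. First I would compute $\partial$ for $n=1$ and match it against Caruso. Then I would reduce general $n$ to copies of the $n=1$ computation, one per summand $i$. The point is that $H_*(C_{p^{n-i}})$ acts in a way that yields the same counts, because the $C_{p^n}/C_{p^i}$-orbits are separated from the $C_p$ norm part. Finally, the formula for $r_k$ follows by dividing graded dimensions by the Poincaré series $(1+t)^2$ of $\mathbb{F}_p\{1,\beta\}^{\otimes2}$. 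The vanishing $r_k=0$ for $k\le2$ follows from the $1$-connectivity of $\tilde{R}$ established in \cref{prop:prop_of_RCpn}, together with the shift $\Omega^2$.
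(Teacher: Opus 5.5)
There are genuine gaps, even though your skeleton for (1) matches the paper's: the fiber sequence of \cref{lem:mapping_spectra}, $I^2=0$ from \cref{lem:E_1_trivial}, and the splitting coming from $\Mod_{\mathbb{Z}}(\Sp)$.

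\textbf{The image in (1).} The step you flag as the main obstacle needs no computation of $\partial$. The forgetful map $\mathscr{A}^*_{C_{p^n}}\to\mathscr{A}^*$ factors through restriction to $C_p\le C_{p^n}$. So Caruso's $n=1$ theorem gives $\mathrm{Im}\subseteq\mathbb{F}_p\{1,\beta\}$ for every $n$, and exactness then makes $\partial$ injective on $\mathscr{A}^*/\mathbb{F}_p\{1,\beta\}$. Your proposal never establishes this inclusion for $n\ge 2$, and (1) depends on it: the splitting $\mathscr{A}^*_{C_{p^n}}\cong\mathbb{F}_p\{1,\beta\}\oplus I^*$ requires the image to be exactly $\mathbb{F}_p\{1,\beta\}$.

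\textbf{The homology of $\tilde{R}_{C_{p^n}}$.} Your plan fails for $n\ge2$. For $i<n$ the extension $C_{p^i}/C_{p^{i-1}}\to C_{p^n}/C_{p^{i-1}}\to C_{p^n}/C_{p^i}$ is nonsplit, so the homotopy orbit spectral sequence does not degenerate. Its $E^2$-page has total dimension $k(k+1)/2$ in degree $k$, whereas the actual answer is $k$, so ``K\"unneth-type counting'' on $E^2$ overcounts. The paper instead uses the group isomorphism $(a,b)\mapsto(a-b,b)$ to identify the $i$-th summand with $\tilde{B}C_{p}\otimes BC_{p^{n-i+1}}$ (\cref{lem:computation_tilde_RCpn}). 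Also, the counts $m_j$ in (3) do not arise from $H_*(\tilde{R}_{C_{p^n}})$ at all; they come from the $\mathscr{A}^*$ factor.

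\textbf{Freeness in (2).} This is the more serious problem. The fiber is $\Omega^2F_{\Sp}(\tilde{R}_{C_{p^n}}\otimes\mathbb{F}_p,\mathbb{F}_p)$, a mapping spectrum in $\Sp$, with homotopy $(H^*(\tilde{R}_{C_{p^n}})\otimes\mathscr{A}^*)[-2]$. Rewriting it as a mapping spectrum in $\Mod_{\mathbb{Z}}$ discards the $\mathscr{A}^*$ factor entirely. The map from $\Mod_{\mathbb{Z}}(\Sp)$ only supplies the bimodule structure: right multiplication by $\beta$ acts on $\mathscr{A}^*$, and left multiplication acts diagonally via $\Delta(\beta)=\beta\otimes1+1\otimes\beta$.

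Your freeness criterion is also the wrong way around. A summand $\mathbb{Z}$ or $\mathbb{Z}/p^j$ with $j\ge 2$ in $\tilde{R}\otimes\mathbb{Z}$ yields trivial $\mathbb{F}_p\{1,\beta\}$-summands of $H^*(\tilde{R})$. Each of these contributes a copy of $\mathscr{A}^*$, which is not bimodule-free because it contains $\mathbb{F}_p\{1,\beta\}$ as a summand. What you need is exactly that $H^*(\tilde{R}_{C_{p^n}})$ is free as a left $\mathbb{F}_p\{1,\beta\}$-module, i.e.\ only order-$p$ torsion; this follows from the identification above.

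Given that, three facts finish (2):
\begin{itemize}
\item $H^*(\tilde{R}_{C_{p^n}})\otimes\mathscr{A}^*$ is bimodule-free, with bases $\{1\otimes 1\}$ and $\{1\otimes(1\otimes 1),\beta\otimes(1\otimes1)\}$ on the two kinds of summands.
\item $\mathscr{A}^*/\mathbb{F}_p\{1,\beta\}$ is bimodule-free on the admissible monomials defining $m_l$.
\item The cokernel of an injective map between free bimodules is free with additive ranks. This is \cref{lem:injection}; alternatively it follows from self-injectivity of $\mathbb{F}_p\{1,\beta\}^{\otimes 2}$.
\end{itemize}
Dividing Poincar\'e series by $(1+t)^2$ computes the $r_k$ only once freeness of $I^*_{C_{p^n}}$ is known.
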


\begin{proof}[Proof of \cref{thm:steenrod_alg} (1)]
    We first apply \cref{lem:mapping_spectra} to the case $X=Y=\underline{\mathbb{F}}_p$. This yields a fiber sequence of spectra 
    \[\Omega^2F_{\Sp}(\tilde{R}_{C_{p^n}}\otimes \mathbb{F}_p, \mathbb{F}_p)\to F_{\Sp_{C_{p^n}}}(\underline{\mathbb{F}}_p,\underline{\mathbb{F}}_p)\to F_{\Sp}(\mathbb{F}_p,\mathbb{F}_p).\]

    Passing to homotopy groups yields a long exact sequence 
    \[\cdots\to(H^*(\tilde{R}_{C_{p^n}})\otimes \mathscr{A}^*)[-2]\to \mathscr{A}^*_{C_{p^n}}\to \mathscr{A}^*\to \cdots\]
    where $H^*(-)$ denotes the cohomology of a spectrum with $\mathbb{F}_p$ coefficients.

    By \cite[Theorem 3]{CARUSO_1999}, when $n=1$, the image of 
    \[\mathscr{A}^*_{C_{p}}\to \mathscr{A}^*\]
    is the $\mathbb{F}_p$-vector space spanned by the unit $1$ and the Bockstein class $\beta$. Then for general $n\geqslant 1$, we obtain 
    \[\mathrm{Im}(\mathscr{A}^*_{C_{p^n}}\to \mathscr{A}^*)\subseteq \mathbb{F}_p\{1,\beta\}\]
    by restricting to the subgroup $C_p\leq C_{p^n}$.

    Conversely, setting $A=\mathbb{Z}$ in \cref{cor:cor_of_RCpn}, we get a commutative diagram of categories 
    \[\begin{tikzcd}
	{\Mod_{\mathbb{Z}}(\Sp)} && {\Sp_{C_{p^n}}^{\mathrm{const}}} \\
	& {\Sp.}
	\arrow[from=1-1, to=1-3]
	\arrow[from=1-1, to=2-2]
	\arrow[from=1-3, to=2-2]
    \end{tikzcd}\]
    Consider the induced diagram on the homotopy groups of the endomorphism spectra of $\mathbb{F}_p$, and note that the homotopy groups of the endomorphism spectrum of $\mathbb{F}_p$ in $\Mod_{\mathbb{Z}}(\Sp)$ are exactly $\mathbb{F}_p\{1,\beta\}$. Then we have a commutative diagram of graded rings 
    \[\begin{tikzcd}
	{\mathbb{F}_p\{1,\beta\}} && {\mathscr{A}^*_{C_{p^n}}} \\
	& {\mathscr{A}^*.}
	\arrow[from=1-1, to=1-3]
	\arrow[from=1-1, to=2-2]
	\arrow[from=1-3, to=2-2]
    \end{tikzcd}\]
    In particular, $\mathbb{F}_p\{1,\beta\}$ lies in the image of $\mathscr{A}^*_{C_{p^n}}\to \mathscr{A}^*$. Thus, 
    \[\mathrm{Im}(\mathscr{A}^*_{C_{p^n}}\to \mathscr{A}^*)= \mathbb{F}_p\{1,\beta\}.\]

    Moreover, the commutative diagram of graded rings above also identifies $\mathbb{F}_p\{1,\beta\}$ as a subring of $\mathscr{A}^*_{C_{p^n}}$. This gives a splitting 
    \[\mathscr{A}^*_{C_{p^n}}\cong \mathbb{F}_p\{1,\beta\}\oplus I^*_{C_{p^n}}\]
    where 
    \[I^*_{C_{p^n}}=\mathrm{ker}(\mathscr{A}^*_{C_{p^n}}\to \mathscr{A}^*)\cong \mathrm{coker}(\mathscr{A}^*[-1]\to(H^*(\tilde{R}_{C_{p^n}})\otimes \mathscr{A}^*)[-2]).\]

    By \cref{lem:E_1_trivial}, the induced multiplication on $(H^*(\tilde{R}_{C_{p^n}})\otimes \mathscr{A}^*)[-2]$ is trivial. Therefore, the multiplication on $I^*_{C_{p^n}}$ is also trivial since it is a quotient of $(H^*(\tilde{R}_{C_{p^n}})\otimes \mathscr{A}^*)[-2]$. This shows that $I^*_{C_{p^n}}$ is a square-zero ideal and proves (1).
\end{proof}

To prove the remaining parts of \cref{thm:steenrod_alg}, we need a few lemmas. 

\begin{lem}\label[lem]{lem:computation_tilde_RCpn}
With the notation of \cref{con:tilde_RCpn}, for $1\leqslant i \leqslant n$, there is a noncanonical equivalence of spectra
\[(\tilde{B}C_{p^i}/C_{p^{i-1}}\otimes BC_{p^i}/C_{p^{i-1}})_{hC_{p^n}/C_{p^{i}}}\simeq  \tilde{B}C_{p^i}/C_{p^{i-1}}\otimes BC_{p^n}/C_{p^{i-1}}.\]

Therefore, as a left $\mathbb{F}_p\{1,\beta\}$-module, $H^*(\tilde{R}_{C_{p^n}})$ is free with 
\[H^*(\tilde{R}_{C_{p^n}})\simeq \bigoplus_{k=1}^{\infty}\mathbb{F}_p\{1,\beta\}[-k]^{\oplus n\cdot\left\lfloor\frac{k+1}{2}\right\rfloor}.\]
\end{lem}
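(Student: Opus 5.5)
The plan is to split the statement into two parts. First comes a general untwisting statement for the homotopy orbits. Then comes a cohomology computation for $BC_{p^m}$ and its reduced version.

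Write $Q=C_{p^n}/C_{p^{i-1}}$ and $N=C_{p^i}/C_{p^{i-1}}\le Q$; this is a normal subgroup with quotient $Q/N\cong C_{p^n}/C_{p^i}$. I will realize $BN=\Sigma^\infty_+BN\simeq \mathbb{S}_{hN}$ in $\Sp^{BQ/N}$ as the orbit spectrum $(\Sigma^\infty_+ Q/N)$-twisted version. Concretely, there is an equivalence of $Q/N$-spectra $\mathbb{S}_{hN}\simeq (\Sigma^\infty_+ Q)_{hN}\otimes_{?}$, but the cleaner route is through the homotopy orbit identity
\[(E\otimes \mathbb{S}_{hN})_{hQ/N}\simeq (E\otimes \Sigma^\infty_+EQ)_{hQ}\]
for $E$ inflated from $Q/N$, combined with the projection formula. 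The key observation is that $\mathbb{S}_{hN}$ with its residual $Q/N$-action is $\Sigma^\infty_+(EQ/N)$. Since $Q$ is abelian, conjugation acts trivially on $BN$, so this is $\Sigma^\infty_+BN$ with an action that is trivial up to homotopy but not necessarily coherently trivial. Hence
\[(\tilde{B}N\otimes \Sigma^\infty_+ BN)_{hQ/N}\simeq (\tilde{B}N\otimes \Sigma^\infty_+ EQ)_{hQ}\simeq \tilde B N\otimes_{?} \cdots.\]
To obtain an actual equivalence I will use the fact that $\tilde{B}N$ is pulled back along $Q\to Q/N$ and carries a trivial $Q/N$-action. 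For the spectrum $\mathbb{S}$ with trivial action this gives $\Sigma^\infty_+(EQ/N\times_{Q/N} \ldots)$. Formally,
\[(Y\otimes \mathbb{S}_{hN})_{hQ/N}\simeq (Y)_{hQ}\ \text{(inflated }Y\text{)}\simeq Y\otimes \Sigma^\infty_+BQ\]
when $Y$ has trivial $Q/N$-action. This is precisely the statement that homotopy orbits compose: $(-)_{hQ}=((-)_{hN})_{hQ/N}$, applied to $Y$ with trivial $Q$-action, together with the projection formula $(Y\otimes \mathbb{S})_{hQ}\simeq Y\otimes \mathbb{S}_{hQ}$. Here $\tilde BN$ has trivial $Q/N$-action by \cref{con:tilde_RCpn}: it is the fiber of a map of trivial objects, since the collapsing map is $Q/N$-equivariant from $\mathbb{S}_{hN}$, whose action is the residual one. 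This is exactly the subtle point. $\tilde BN$ is defined in $\Sp^{BQ/N}$ as a fiber of $\mathbb{S}_{hN}\to\mathbb{S}$, and $\mathbb{S}_{hN}$ carries a possibly nontrivial residual action. This is why the equivalence is noncanonical.

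The main obstacle, then, is to show that the residual action on $\tilde BN\otimes BN$ can be untwisted. My approach is to run the homotopy-orbit spectral sequence and argue at the level of $H\mathbb{F}_p$-cohomology only, since the paper only needs the cohomology. Still, I will try to prove the spectrum statement through the following route. The splitting $\mathbb{S}_{hN}\simeq \mathbb{S}\oplus\tilde BN$ is $Q/N$-equivariant, because the unit $\mathbb{S}\to\mathbb{S}_{hN}$ is equivariant and splits the collapse. It follows that
\[(\tilde BN\otimes \mathbb{S}_{hN})_{hQ/N}\simeq(\tilde BN\otimes \mathbb{S}_{hN}\otimes \mathbb{S}_{hN})_{hQ/N}/(\ldots).\]
Failing a clean argument there, I will use the equivariant equivalence $\tilde BN\otimes \mathbb{S}_{hN}\simeq \tilde BN\otimes \mathbb{S}_{hN}^{\mathrm{triv}}$. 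This comes from the shearing map $BN\times BN\to BN\times BN$, $(a,b)\mapsto(a,ab)$, which untwists the diagonal action using the group structure of $BN$ ($N$ abelian). Applying $(-)_{hQ/N}$ then gives $\tilde BN\otimes (\mathbb{S}_{hN})_{hQ/N}=\tilde BN\otimes BQ$.

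For the cohomology, $\tilde R_{C_{p^n}}\simeq\bigoplus_i \tilde BC_p\otimes BC_{p^{n-i+1}}$. By the Künneth theorem, $H^*(BC_{p^m};\mathbb{F}_p)$ is $\mathbb{F}_p$ in each degree $\ge0$, and $H^*(\tilde BC_p)$ is $\mathbb{F}_p$ in each degree $\ge1$. Hence $H^k$ of each summand has dimension $k$, and the total dimension is $nk$. For the module structure I will compute the action of $\beta$, which acts through $H\mathbb{Z}$-linearity on $\tilde BC_p\otimes BC_{p^m}$ as the Bockstein on the tensor product. On $\tilde BC_p$ it pairs degrees $2j-1\to 2j$. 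On $BC_{p^m}$ with $m\ge2$ it is zero, but on the tensor product it is still exact: the $\beta$-complex has zero cohomology, because $\tilde BC_p\otimes H\mathbb{F}_p$ is $\beta$-acyclic. This follows from $\mathbb{Z}\otimes \tilde BC_p$ being a sum of shifted $\mathbb{Z}/p$'s. Since $\beta^2=0$ and $\mathbb{F}_p\{1,\beta\}$ is self-injective, acyclicity implies freeness. Counting the generators in degree $k$ gives $\dim H^k-(\#\text{generators in degree }k-1)$. This recursion yields $n\lfloor (k+1)/2\rfloor$, which checks out as $nk=n\lfloor (k+1)/2\rfloor+n\lfloor k/2\rfloor$.
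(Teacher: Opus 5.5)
Your final route to the spectrum-level equivalence, the shear on $BN\times BN$, is the paper's idea. The paper shears after taking orbits: it identifies $(BN\times BN)_{hQ/N}\simeq B(Q\times_{Q/N}Q)$ and uses $Q\times_{Q/N}Q\cong N\times Q$, $(a,b)\mapsto(a-b,b)$, which commutes with the second projection to $BQ\simeq(BN)_{hQ/N}$; applying $\Sigma^\infty_+$ and taking fibers gives $\tilde{B}N\otimes BQ$. As written, however, your proposal does not establish this. All the issues below occur only for $i<n$.

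\begin{itemize}
\item Two claims you make along the way are false and must be removed.
\begin{itemize}
\item $\tilde{B}N$ does not carry the trivial $Q/N$-action.
\item The collapse $\mathbb{S}_{hN}\to\mathbb{S}$ has no $Q/N$-equivariant section. Applying $(-)_{hQ/N}$ to one would split $\Sigma^\infty_+BQ\to\Sigma^\infty_+B(Q/N)$. But $H^2(BC_{p^m};\mathbb{F}_p)\to H^2(BC_{p^{m+1}};\mathbb{F}_p)$ is zero, because the integral generator pulls back to $p$ times a generator.
\end{itemize}
\item Your displayed equivalence $\tilde{B}N\otimes\mathbb{S}_{hN}\simeq\tilde{B}N\otimes\mathbb{S}_{hN}^{\mathrm{triv}}$ trivializes the wrong factor. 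Read literally, after orbits it yields $(\tilde{B}N)_{hQ/N}\otimes BN$, whose mod $p$ cohomology has dimension $2k-1$, not $k$, in each degree $k\geqslant 2$. So it is false. The shear must commute with the projection whose $\Sigma^\infty_+$-fiber defines the reduced factor. What it actually gives is $\tilde{B}N\otimes\mathbb{S}_{hN}\simeq\tilde{B}N^{\mathrm{triv}}\otimes\mathbb{S}_{hN}$: the reduced factor is untwisted and $\mathbb{S}_{hN}$ keeps its residual action. That is exactly the form the projection formula needs.
\item The equivariance of the shear carries all the content, and you only assert it; abelianness of $N$ alone does not give it. Write $T$ for $BN$ with its residual action. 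You need the multiplication $BN^{\mathrm{triv}}\times T\to T$ to be $Q/N$-equivariant. Under the equivalence between $Q/N$-anima and anima over $B(Q/N)$:
\begin{itemize}
\item $T$ corresponds to $BQ$, and $BN^{\mathrm{triv}}\times T$ to $B(N\times Q)$;
\item the multiplication corresponds to $B$ of $(c,b)\mapsto c+b$, which is a homomorphism over $Q/N$ because $N$ is central in $Q$.
\end{itemize}
Written out, this is precisely the paper's proof.
\end{itemize}

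Your cohomology step is correct and genuinely different from the paper's. The paper writes $H^*(\tilde{B}C_p)$ and $H^*(BC_{p^{n-i+1}})$ explicitly as $\mathbb{F}_p\{1,\beta\}$-modules, treating $i=n$ and $i<n$ separately, and tensors them via the Cartan formula. You instead count dimensions ($nk$ in degree $k$), deduce freeness from vanishing $\beta$-homology, and solve $g_k+g_{k-1}=nk$. This avoids the case split and the Cartan formula, at the cost of not producing explicit generators. You should justify why acyclicity survives tensoring with $BC_{p^m}$. For any spectrum $Y$, $H\mathbb{Z}\otimes\tilde{B}C_p\otimes Y\simeq\bigoplus_{j\geqslant1}\Sigma^{2j-1}H\mathbb{F}_p\otimes Y$ is, as an $H\mathbb{Z}$-module, a sum of shifts of $H\mathbb{Z}/p$. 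Hence the mod $p$ cohomology of $\tilde{B}C_p\otimes Y$ is free over $\pi_{-*}F_{H\mathbb{Z}}(H\mathbb{F}_p,H\mathbb{F}_p)=\mathbb{F}_p\{1,\beta\}$. Alternatively, use the Künneth formula for $\beta$-homology.
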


\begin{proof}
    By \cref{con:tilde_RCpn}, $(\tilde{B}C_{p^i}/C_{p^{i-1}}\otimes BC_{p^i}/C_{p^{i-1}})_{hC_{p^n}/C_{p^{i}}}$ is equivalent to the fiber of 
    \[(BC_{p^i}/C_{p^{i-1}}\otimes BC_{p^i}/C_{p^{i-1}})_{hC_{p^n}/C_{p^{i}}}\to (BC_{p^i}/C_{p^{i-1}})_{hC_{p^n}/C_{p^{i}}}.\]

    Note that $\Sigma_+^{\infty}$ preserves homotopy orbits. At the level of anima, the short exact sequence of abelian groups 
    \[0\to C_{p^i}/C_{p^{i-1}}\to C_{p^n}/C_{p^{i-1}} \to C_{p^n}/C_{p^{i}} \to 0\]
    induces an equivalence of anima
    \[(BC_{p^i}/C_{p^{i-1}})_{hC_{p^n}/C_{p^{i}}}\simeq BC_{p^n}/C_{p^{i-1}}.\]

    Similarly, the short exact sequence of abelian groups 
    \[0\to C_{p^i}/C_{p^{i-1}}\times C_{p^i}/C_{p^{i-1}}\to C_{p^n}/C_{p^{i-1}}\times_{C_{p^n}/C_{p^{i}}} C_{p^n}/C_{p^{i-1}} \to C_{p^n}/C_{p^{i}} \to 0\]
    gives an equivalence of anima
    \[(BC_{p^i}/C_{p^{i-1}}\times BC_{p^i}/C_{p^{i-1}})_{hC_{p^n}/C_{p^{i}}}\simeq B(C_{p^n}/C_{p^{i-1}}\times_{C_{p^n}/C_{p^{i}}} C_{p^n}/C_{p^{i-1}}).\]

    There is a noncanonical isomorphism of abelian groups 
    \[\begin{tikzcd}
	{C_{p^n}/C_{p^{i-1}}\times_{C_{p^n}/C_{p^{i}}} C_{p^n}/C_{p^{i-1}}} & {C_{p^i}/C_{p^{i-1}}\times C_{p^n}/C_{p^{i-1}}} \\[-2em]
	{(a,b)} & {(a-b,b).}
	\arrow["\cong", from=1-1, to=1-2]
	\arrow[maps to, from=2-1, to=2-2]
    \end{tikzcd}\]

    Under this isomorphism, the map of anima 
    \[(BC_{p^i}/C_{p^{i-1}}\times BC_{p^i}/C_{p^{i-1}})_{hC_{p^n}/C_{p^{i}}}\to (BC_{p^i}/C_{p^{i-1}})_{hC_{p^n}/C_{p^{i}}}\]
    is equivalent to 
    \[BC_{p^i}/C_{p^{i-1}}\times BC_{p^n}/C_{p^{i-1}}\to BC_{p^n}/C_{p^{i-1}}\]
    given by projection onto the second component. 

    Applying $\Sigma_+^{\infty}$ identifies $(\tilde{B}C_{p^i}/C_{p^{i-1}}\otimes BC_{p^i}/C_{p^{i-1}})_{hC_{p^n}/C_{p^{i}}}$ with the fiber of 
    \[BC_{p^i}/C_{p^{i-1}}\otimes BC_{p^n}/C_{p^{i-1}}\to BC_{p^n}/C_{p^{i-1}},\]
    which is $\tilde{B}C_{p^i}/C_{p^{i-1}}\otimes BC_{p^n}/C_{p^{i-1}}$.

    It remains to compute the cohomology with $\mathbb F_p$ coefficients. By the standard computation of group cohomology using the periodic resolution, we have
    \[H^*(\tilde{B}C_{p^i}/C_{p^{i-1}})\cong \bigoplus_{t>0\ \text{odd}} \mathbb{F}_p\{1,\beta\}[-t] \]
    and
    \[H^*(BC_{p^n}/C_{p^{i-1}})\cong \begin{cases}
        \mathbb{F}_p\oplus\displaystyle\bigoplus_{s>0\ \text{odd}} \mathbb{F}_p\{1,\beta\}[-s]& \text{ if }i=n\\
        \displaystyle\bigoplus_{s\geqslant 0} \mathbb{F}_p[-s]& \text{ if }i<n.\\
    \end{cases}  \]

    Taking the tensor product, in both cases, we have 
    \begin{align*}
        &H^*(\tilde{B}C_{p^i}/C_{p^{i-1}}\otimes BC_{p^n}/C_{p^{i-1}})\\
        \cong\ & \bigoplus_{t>0\ \text{odd}} \mathbb{F}_p\{1,\beta\}[-t] \oplus \bigoplus_{s,t>0\ \text{odd}} \left(\mathbb{F}_p\{1,\beta\}[-s-t]\oplus\mathbb{F}_p\{1,\beta\}[-s-t-1]\right)\\
        \cong\ &\bigoplus_{ \substack{t>0\ \text{odd}\\s\geqslant 0} }\mathbb{F}_p\{1,\beta\}[-s-t].
    \end{align*}
    Therefore, for each $k\geqslant 0$, the number of summands $\mathbb{F}_p\{1,\beta\}[-k]$ in \[H^*(\tilde{B}C_{p^i}/C_{p^{i-1}}\otimes BC_{p^n}/C_{p^{i-1}})\]
    is equal to 
    \[\#\{(s,t):t>0\ \text{odd},s\geqslant 0, s+t=k\}=\left\lfloor\frac{k+1}{2}\right\rfloor.\]
    
    Therefore,
    \[H^*(\tilde{R}_{C_{p^n}})\simeq \bigoplus_{k=1}^{\infty}\mathbb{F}_p\{1,\beta\}[-k]^{\oplus n\cdot\left\lfloor\frac{k+1}{2}\right\rfloor}.\]
\end{proof}

We also need the following properties of free $\mathbb{F}_p\{1,\beta\}$-bimodules. 

\begin{defn}
    For an $\mathbb{F}_p\{1,\beta\}$-bimodule $X$ and $k\in\mathbb{Z}$, we denote by \[X_{\leqslant k}\subseteq X\]
    the sub-bimodule generated by elements of $X$ of degree at most $k$, and 
    denote by \[X_{=k}=X_{\leqslant k}/X_{\leqslant k-1}\]
    the $k$-th graded piece. 
\end{defn}

\begin{rem}
    Note that $\mathbb{F}_p\{1,\beta\}\otimes\mathbb{F}_p\{1,\beta\}$ is nonzero in degrees $0$, $1$, and $2$, so $X_{=k}$ and $X_{\leqslant k}$ may have nonzero components in degree $k+1$ and $k+2$.
\end{rem}

\begin{lem}\label[lem]{lem:free_bimod}
    Let $X$ be an $\mathbb{F}_p\{1,\beta\}$-bimodule. Then $X$ is free if and only if for every $k\in\mathbb{Z}$, the graded piece $X_{=k}$ is isomorphic to a direct sum of 
    \[\mathbb{F}_p\{1,\beta\}\otimes\mathbb{F}_p\{1,\beta\}[-k].\]
\end{lem}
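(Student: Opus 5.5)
Write $E=\mathbb{F}_p\{1,\beta\}$ and $B=E\otimes E$. Left $B$-modules are exactly $E$-bimodules, and $B$ is concentrated in degrees $0,1,2$. The forward direction is immediate. If $X\cong\bigoplus_j B[-k_j]$, the generators of degree at most $k$ generate $\bigoplus_{k_j\leqslant k}B[-k_j]$, and this is exactly $X_{\leqslant k}$. Indeed, every element of $X$ of degree at most $k$ lies in it, because $B$ is nonnegatively graded, so a summand $B[-k_j]$ with $k_j>k$ has nothing in degree $\leqslant k$. Hence $X_{=k}\cong B[-k]^{\oplus\#\{j:k_j=k\}}$.

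For the converse, assume each $X_{=k}$ is free on generators in degree $k$. Choose homogeneous elements $x_{k,\alpha}\in X^k$ lifting a basis of generators of $X_{=k}$; this is possible because $X_{=k}$ is generated in degree $k$. Define $\phi\colon\bigoplus_{k,\alpha}B[-k]\to X$ by sending the generators to the $x_{k,\alpha}$, and write $F_{\leqslant k}$ for the span of the generators of degree at most $k$.

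First, by induction on $k$, $\phi$ restricts to an isomorphism $F_{\leqslant k}\to X_{\leqslant k}$. The map of short exact sequences
\[0\to F_{\leqslant k-1}\to F_{\leqslant k}\to F_{=k}\to 0,\qquad 0\to X_{\leqslant k-1}\to X_{\leqslant k}\to X_{=k}\to 0\]
is an isomorphism on the outer terms: on the left by induction, and on the right by the choice of lifts. The five lemma then gives the claim for $F_{\leqslant k}$.

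Next, $X=\bigcup_k X_{\leqslant k}$ and $F=\bigcup_k F_{\leqslant k}$, so $\phi$ is the colimit of these isomorphisms and is itself an isomorphism.

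The base of the induction is the main issue, since $X$ might not be bounded below. To handle it, I work degreewise. The degree-$d$ part of $X_{\leqslant k}$ only involves generators in degrees $d-2,d-1,d$. Surjectivity in degree $d$ follows from surjectivity of $F_{\leqslant d}\to X_{\leqslant d}$ in degree $d$. Injectivity in degree $d$ only involves finitely many filtration steps, from $d-2$ to $d$.

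So I run the five-lemma argument on the finite filtration from $X_{\leqslant d-3}$ to $X_{\leqslant d}$ restricted to degree $d$. The term $X_{\leqslant d-3}$ vanishes in degrees $>d-1$, hence in degree $d$, and likewise for $F_{\leqslant d-3}$. This starts the induction at zero in the relevant degree, with no global bounded-below hypothesis needed. I expect this degreewise bookkeeping, together with showing that each $X_{=k}$ is generated in degree $k$ so that lifts exist, to be the only delicate step. The latter holds by definition of $X_{\leqslant k}$.
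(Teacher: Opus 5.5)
Your proof is correct and follows essentially the same route as the paper. Lifting bases of the free pieces $X_{=k}$ is the same as the paper's choice of splittings of $X_{\leqslant k}\to X_{=k}$. Your degreewise five-lemma argument, on the finite filtration $0=(X_{\leqslant d-3})^d\subseteq\cdots\subseteq(X_{\leqslant d})^d=X^d$, simply spells out the paper's terse assertion that the comparison map $\bigoplus_k X_{=k}\to X$ is an isomorphism in each degree.
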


\begin{proof}
    Assume that $X$ is free. Then $X$ is isomorphic to a direct sum of shifts of 
    \[\mathbb{F}_p\{1,\beta\}\otimes\mathbb{F}_p\{1,\beta\}.\]
    Note that 
    \[(\mathbb{F}_p\{1,\beta\}\otimes\mathbb{F}_p\{1,\beta\}[-s])_{= k}=\begin{cases}
        \mathbb{F}_p\{1,\beta\}\otimes\mathbb{F}_p\{1,\beta\}[-s] & \text{ if }k=s\\
        0 & \text{ if }k\neq s.
    \end{cases}\]
    Since the construction $(-)_{=k}$ commutes with infinite coproducts, $X_{=k}$ is isomorphic to a direct sum of 
    \[\mathbb{F}_p\{1,\beta\}\otimes\mathbb{F}_p\{1,\beta\}[-k].\]

    Conversely, if each $X_{=k}$ is isomorphic to a direct sum of 
    \[\mathbb{F}_p\{1,\beta\}\otimes\mathbb{F}_p\{1,\beta\}[-k],\]
    then the short exact sequence 
    \[0\to X_{\leqslant k-1}\to X_{\leqslant k} \to X_{= k}\to 0\]
    splits. The direct sum of the splittings 
    \[X_{= k}\to X_{\leqslant k}\to X\]
    gives a comparison map 
    \[\bigoplus_{k\in\mathbb{Z}} X_{= k}\to X\]
    which is an isomorphism in each degree, so it is itself an isomorphism.
\end{proof}

\begin{defn}
    Suppose that the equivalent conditions in \cref{lem:free_bimod} hold. For any integer $k$, choosing an $\mathbb{F}_p\{1,\beta\}$-bimodule basis of $X_{= k}$ is equivalent to choosing an $\mathbb{F}_p$-basis of 
    $(X_{= k})^k$. We define the $k$-th rank of $X$ (possibly infinite) as
    \[r_k(X):=\mathrm{dim}_{\mathbb{F}_p}(X_{=k})^k.\] 
    Then we have a noncanonical isomorphism 
    \[X\cong \bigoplus_{k\in\mathbb{Z}}\mathbb{F}_p\{1,\beta\}\otimes\mathbb{F}_p\{1,\beta\}[-k]^{\oplus r_k(X)}.\]
    In particular, the $k$-th rank of $X$ is independent of the choice of basis. 
\end{defn}

\begin{lem}\label[lem]{lem:injection}
    Let $X,Y$ be free $\mathbb{F}_p\{1,\beta\}$-bimodules, and let $f:X\to Y$ be an injective $\mathbb{F}_p\{1,\beta\}$-bilinear map. Then the cokernel of $f$ is free and for any $k\in\mathbb{Z}$, we have 
    \[r_k(X)+r_k(Y/X)=r_k(Y).\]
\end{lem}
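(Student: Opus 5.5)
The plan is to reduce to the statement that an injective map between free bimodules over the graded ring $E:=\mathbb{F}_p\{1,\beta\}\otimes\mathbb{F}_p\{1,\beta\}$ admits a complement. Here $E\cong \mathbb{F}_p[x,y]/(x^2,y^2)$, with $x=\beta\otimes 1$ and $y=1\otimes\beta$; it is graded-commutative up to sign, which is harmless for what follows. $E$ is a finite-dimensional graded local algebra, and it is self-injective: it is a Frobenius algebra, being the tensor product of two exterior algebras, with socle spanned by $xy$ in degree $2$. Hence free graded $E$-modules are injective in the category of graded $E$-modules, at least for finitely generated ones, and in general degreewise because each graded piece is finite-dimensional and bounded.

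\textbf{Step 1 (splitting).} Since $f\colon X\to Y$ is injective and $X$ is injective as a graded $E$-module, $f$ splits, giving $Y\cong X\oplus Y/X$. To handle infinite rank, I would avoid general injectivity of infinite direct sums. Instead I would use the filtration of \cref{lem:free_bimod}: work inductively on $k$ and check the splitting on the graded pieces. Alternatively, note that over a Noetherian self-injective ring arbitrary direct sums of injectives are injective, since $E$ is finite-dimensional and hence Noetherian, and the graded version of Baer's criterion applies. Then $Y/X$ is a direct summand of a free module, so it is projective. Over the graded local ring $E$ (maximal ideal $(x,y)$, bounded grading), graded projectives are free by the graded Nakayama argument. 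So $Y/X$ is free.

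\textbf{Step 2 (ranks).} The ranks can be read off invariantly: $r_k(M)=\dim_{\mathbb{F}_p}(M/\mathfrak{m}M)^k$, where $\mathfrak{m}=(x,y)$. Indeed $E/\mathfrak{m}=\mathbb{F}_p$ in degree $0$, so $E[-k]/\mathfrak{m}$ is $\mathbb{F}_p$ in degree $k$. This agrees with the definition via $(M_{=k})^k$ by \cref{lem:free_bimod}. The functor $M\mapsto M/\mathfrak{m}M=\mathbb{F}_p\otimes_E M$ is additive, so applying it to the split sequence $Y\cong X\oplus Y/X$ gives $r_k(X)+r_k(Y/X)=r_k(Y)$, including the case of infinite values.

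\textbf{Main obstacle.} The step I expect to be delicate is the injectivity of possibly infinite free modules; the rest is bookkeeping. I would try first the socle argument, which avoids injectivity altogether. Injectivity of $f$ implies $f$ is injective on socles, and $\mathrm{soc}(E[-k])=xyE[-k]$ sits in degree $k+2$. Choose a homogeneous basis $\{e_\alpha\}$ of $X$. The elements $xy f(e_\alpha)$ are linearly independent in $\mathrm{soc}(Y)=xyY\cong Y/\mathfrak{m}Y$ (shifted by $2$). Hence the classes of $f(e_\alpha)$ in $Y/\mathfrak{m}Y$ are linearly independent. Extend them by homogeneous elements to a basis of $Y/\mathfrak{m}Y$ and lift those to $Y$. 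By graded Nakayama, which works because the gradings are bounded below degreewise, the $f(e_\alpha)$ together with the lifts form an $E$-basis of $Y$. This simultaneously shows $Y/X$ is free on the lifts and gives the rank identity.
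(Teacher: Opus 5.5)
Your proposal is correct, and your final socle argument is essentially the paper's proof. Multiplication by $xy$ (that is, $m\mapsto\beta m\beta$) is exactly the paper's trick for showing $\bar f_k$ is injective, and $(Y/\mathfrak{m}Y)^k$ is precisely the paper's $(Y_{=k})^k$, since $(Y_{\leqslant k-1})^k=(\mathfrak{m}Y)^k$; so extending the basis and lifting is the paper's choice of compatible sections. The injective-module route in Step 1 is valid but unnecessary. One correction: Nakayama applies here because $\mathfrak{m}^3=0$, not because of any boundedness of the grading, and free bimodules in the lemma may have generators in arbitrarily negative degrees.
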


\begin{proof}
    We first show that the induced maps on each graded piece \[\bar{f}_k:X_{=k}\to Y_{=k}\] are injective. Since $X$ and $Y$ are free, it suffices to show that the induced map on degree $k$
    \[\bar{f}_k:(X_{=k})^k\to (Y_{=k})^k\]
    is injective by \cref{lem:free_bimod}. 
    
    To see this, consider the following diagram of $\mathbb{F}_p$-vector spaces
    \[\begin{tikzcd}
	0 & {(X_{\leqslant k-1})^k} & {(X_{\leqslant k})^k} & {(X_{= k})^k} & 0 \\
	0 & {(Y_{\leqslant k-1})^k} & {(Y_{\leqslant k})^k} & {(Y_{= k})^k} & 0.
	\arrow[from=1-1, to=1-2]
	\arrow[from=1-2, to=1-3]
	\arrow["f", from=1-2, to=2-2]
	\arrow[from=1-3, to=1-4]
	\arrow["f", from=1-3, to=2-3]
	\arrow[from=1-4, to=1-5]
	\arrow["{\bar{f}_k}", from=1-4, to=2-4]
	\arrow[from=2-1, to=2-2]
	\arrow[from=2-2, to=2-3]
	\arrow[from=2-3, to=2-4]
	\arrow[from=2-4, to=2-5]
    \end{tikzcd}\]
    Let $x\in (X_{\leqslant k})^k$ represent a class $[x]\in (X_{= k})^k$ and suppose $\bar{f}_k[x]=0$. Then $f(x)\in (Y_{\leqslant k-1})^k$, so $f(x)\in Y^k$ is generated by elements of degree less than $k$. Consequently, 
    \[f(\beta x \beta) =\beta f(x) \beta=0\in Y. \]
    Since $f$ is injective, we have $\beta x \beta=0\in X$, and thus 
    \[\beta [x] \beta=0\in X_{= k}.\]
    But $X_{= k}$ is free, so it follows that $[x]=0$. This proves that 
    \[\bar{f}_k:(X_{=k})^k\to (Y_{=k})^k\]
    is injective.

    Now we choose a section of $X_{\leqslant k}\to X_{= k}$ for each integer $k$ to identify \[X\cong\bigoplus_{k\in\mathbb{Z}} X_{= k}.\] 
    In degree $k$, extend this section to one for $(Y_{\leqslant k})^k\to (Y_{= k})^k$ such that 
    \[\begin{tikzcd}
	{(X_{= k})^k} & {(X_{\leqslant  k})^k} \\
	{(Y_{= k})^k} & {(Y_{\leqslant  k})^k,}
	\arrow[from=1-1, to=1-2]
	\arrow["{\bar{f}_k}", from=1-1, to=2-1]
	\arrow[from=1-2, to=2-2]
	\arrow[from=2-1, to=2-2]
    \end{tikzcd}\]
    commutes; this is possible because $\bar{f}_k:(X_{=k})^k\to (Y_{=k})^k$ is injective. Extending this section bimodule-linearly defines a section of $Y_{\leqslant k}\to Y_{= k}$ such that 
    \[\begin{tikzcd}
	{X_{= k}} & {X_{\leqslant  k}} \\
	{Y_{= k}} & {Y_{\leqslant  k},}
	\arrow[from=1-1, to=1-2]
	\arrow["{\bar{f}_k}", from=1-1, to=2-1]
	\arrow[from=1-2, to=2-2]
	\arrow[from=2-1, to=2-2]
    \end{tikzcd}\]
    commutes, which determines an isomorphism 
    \[Y\cong\bigoplus_{k\in\mathbb{Z}} Y_{= k}.\]

    By construction, under this choice of splitting, the map $f$ identifies with the direct sum of the maps $\bar{f}_k$:
    \[\bigoplus_{k\in\mathbb{Z}}\bar{f}_k: \bigoplus_{k\in\mathbb{Z}} X_{= k} \to \bigoplus_{k\in\mathbb{Z}} Y_{= k}.\]
    Therefore, it suffices to prove the lemma for 
    \[\bar{f}_k:X_{=k}\to Y_{=k},\]
    which follows immediately from the natural isomorphisms 
    \[(X_{=k})^{k+1}\cong \beta (X_{=k})^{k} \oplus (X_{=k})^{k}\beta,\]
    and 
    \[(X_{=k})^{k+2}\cong \beta (X_{=k})^{k}\beta.\]
\end{proof}

Now we prove the remaining parts of \cref{thm:steenrod_alg}.

\begin{proof}[Proof of \cref{thm:steenrod_alg}(2)]
    Recall the following diagram of $\mathbb{E}_1$-ring spectra from the proof of \cref{thm:steenrod_alg}(1):
    \[\begin{tikzcd}
	{F_{\Mod_{\mathbb{Z}}(\Sp)}(\mathbb{F}_p,\mathbb{F}_p)} && {F_{\Sp_{C_{p^n}}}(\underline{\mathbb{F}}_p,\underline{\mathbb{F}}_p)} \\
	& {F_{\Sp}(\mathbb{F}_p,\mathbb{F}_p).}
	\arrow[from=1-1, to=1-3]
	\arrow[from=1-1, to=2-2]
	\arrow[from=1-3, to=2-2]
    \end{tikzcd}\]
    This exhibits the fiber sequence 
    \[\Omega^2F_{\Sp}(\tilde{R}_{C_{p^n}}\otimes \mathbb{F}_p, \mathbb{F}_p)\to F_{\Sp_{C_{p^n}}}(\underline{\mathbb{F}}_p,\underline{\mathbb{F}}_p)\to F_{\Sp}(\mathbb{F}_p,\mathbb{F}_p)\]
    with the structure of a fiber sequence of $F_{\Mod_{\mathbb{Z}}(\Sp)}(\mathbb{F}_p,\mathbb{F}_p)$-bimodules. In particular, all maps in the long exact sequence 
    \[\cdots\to(H^*(\tilde{R}_{C_{p^n}})\otimes \mathscr{A}^*)[-2]\to \mathscr{A}^*_{C_{p^n}}\to \mathscr{A}^*\to \cdots\]
    are $\mathbb{F}_p\{1,\beta\}$-bilinear, so there is an isomorphism of $\mathbb{F}_p\{1,\beta\}$-bimodules:
    \[I^*_{C_{p^n}}\cong \mathrm{coker}(\mathscr{A}^*[-1]\to(H^*(\tilde{R}_{C_{p^n}})\otimes \mathscr{A}^*)[-2]).\]

    As shown in the proof of \cref{thm:steenrod_alg}(1),
    \[\mathrm{ker}(\mathscr{A}^*[-1]\to(H^*(\tilde{R}_{C_{p^n}})\otimes \mathscr{A}^*)[-2])\cong \mathbb{F}_p\{1,\beta\}[-1],\]
    so $I^*_{C_{p^n}}$ is isomorphic to the cokernel of an injective $\mathbb{F}_p\{1,\beta\}$-bimodule map 
    \[(\mathscr{A}^*/\mathbb{F}_p\{1,\beta\})[-1]\to(H^*(\tilde{R}_{C_{p^n}})\otimes \mathscr{A}^*)[-2].\]
    Here, the $\mathbb{F}_p\{1,\beta\}$-bimodule structure on $\mathscr{A}^*$ is the usual one. For \[H^*(\tilde{R}_{C_{p^n}})\otimes \mathscr{A}^*,\] right multiplication by $\beta$ acts only on the $\mathscr{A}^*$ factor, whereas left multiplication by $\beta$ acts diagonally via the Cartan formula 
    \[\Delta(\beta)=1\otimes \beta + \beta \otimes 1.\]

    By \cref{lem:injection}, it suffices to show that both \[\mathscr{A}^*/\mathbb{F}_p\{1,\beta\}\] and 
    \[H^*(\tilde{R}_{C_{p^n}})\otimes \mathscr{A}^*\]
    are free.

    To treat $\mathscr{A}^*/\mathbb{F}_p\{1,\beta\}$, we identify it with the sub-bimodule of $\mathscr{A}^*$ generated by elements of degree at least $2$. Then the following subset of the admissible basis \cite{cabtan1955iteration}
    \[\begin{cases}
            \{\text{Admissible }Sq^{i_1}Sq^{i_2}\cdots Sq^{i_s}\in\mathscr{A}^*: s\geqslant 1, i_1 \text{ even }, i_s>1\} & \text{ if }p=2,\\
            \{\text{Admissible }\beta^{\epsilon_0}P^{i_1}\beta^{\epsilon_1}\cdots P^{i_s}\beta^{\epsilon_s}\in\mathscr{A}^*: s\geqslant 1, \epsilon_0=\epsilon_s=0\} & \text{ if }p>2,
    \end{cases}\]
    forms an $\mathbb{F}_p\{1,\beta\}$-bimodule basis of $\mathscr{A}^*/\mathbb{F}_p\{1,\beta\}$.

    Now consider $H^*(\tilde{R}_{C_{p^n}})\otimes \mathscr{A}^*$. Note that $H^*(\tilde{R}_{C_{p^n}})$ is free as a left $\mathbb{F}_p\{1,\beta\}$-module, and $\mathscr{A}^*$ is isomorphic, as an $\mathbb{F}_p\{1,\beta\}$-bimodule, to a direct sum of $\mathbb{F}_p\{1,\beta\}$ and a free bimodule. Then it suffices to check that both 
    \[(\mathbb{F}_p\{1,\beta\})\otimes(\mathbb{F}_p\{1,\beta\})\]
    and 
    \[(\mathbb{F}_p\{1,\beta\})\otimes (\mathbb{F}_p\{1,\beta\}\otimes\mathbb{F}_p\{1,\beta\})\]
    are free bimodules. Here, in both cases, right multiplication by $\beta$ acts only on the second factor and left multiplication by $\beta$ acts diagonally via $\Delta(\beta)=1\otimes \beta + \beta \otimes 1$.

    In the first case, $\{1\otimes 1\}$ forms an $\mathbb{F}_p\{1,\beta\}$-bimodule basis. 
    
    In the second case, $\{1\otimes (1\otimes1), \beta\otimes(1\otimes1)\}$ forms an $\mathbb{F}_p\{1,\beta\}$-bimodule basis.

    This completes the proof of \cref{thm:steenrod_alg}(2).
\end{proof}

\begin{proof}[Proof of \cref{thm:steenrod_alg}(3)]
    By \cref{lem:injection}, it suffices to compute the ranks of \[(\mathscr{A}^*/\mathbb{F}_p\{1,\beta\})[-1]\] and 
    \[(H^*(\tilde{R}_{C_{p^n}})\otimes \mathscr{A}^*)[-2].\]

    Write
    \[m_l=\begin{cases}
            \#\{\text{Admissible }Sq^{i_1}Sq^{i_2}\cdots Sq^{i_s}\in\mathscr{A}^l: s\geqslant 1, i_1 \text{ even }, i_s>1\} & \text{ if }p=2;\\
            \#\{\text{Admissible }\beta^{\epsilon_0}P^{i_1}\beta^{\epsilon_1}\cdots P^{i_s}\beta^{\epsilon_s}\in\mathscr{A}^l: s\geqslant 1, \epsilon_0=\epsilon_s=0\} & \text{ if }p>2.
        \end{cases}\]
    Then 
    \[r_k(\mathscr{A}^*/\mathbb{F}_p\{1,\beta\}[-1])=m_{k-1}.\]
    Moreover, $m_k=0$ for $k\leqslant 1$, so $r_k(\mathscr{A}^*/\mathbb{F}_p\{1,\beta\}[-1])=0$ for $k\leqslant 2$.

    By \cref{lem:computation_tilde_RCpn}, \[H^*(\tilde{R}_{C_{p^n}})\simeq \bigoplus_{k=1}^{\infty}\mathbb{F}_p\{1,\beta\}[-k]^{\oplus n\cdot\left\lfloor\frac{k+1}{2}\right\rfloor}\]
    as left $\mathbb{F}_p\{1,\beta\}$-modules. Note that 
    \[\mathscr{A}^*\cong \mathbb{F}_p\{1,\beta\}\oplus \bigoplus_{k\in\mathbb{Z}}\mathbb{F}_p\{1,\beta\}\otimes\mathbb{F}_p\{1,\beta\}[-k]^{\oplus m_k}\]
    where $m_k=0$ for $k\leqslant 1$. Then
    \[r_k((H^*(\tilde{R}_{C_{p^n}})\otimes \mathscr{A}^*)[-2])
    =r_{k-2}((H^*(\tilde{R}_{C_{p^n}})\otimes \mathscr{A}^*))=0\]
    for $k\leqslant 2$, and for $k\geqslant 3$,
    \begin{align*}
        &r_k((H^*(\tilde{R}_{C_{p^n}})\otimes \mathscr{A}^*)[-2])
        =r_{k-2}((H^*(\tilde{R}_{C_{p^n}})\otimes \mathscr{A}^*))\\
        =\ &n\cdot\left\lfloor\frac{k-2+1}{2}\right\rfloor+ \sum_{\substack{i\geqslant 1,j\geqslant 2,\\i+j=k-2}}n\cdot\left\lfloor\frac{i+1}{2}\right\rfloor\cdot m_j+\sum_{\substack{i\geqslant 1,j\geqslant 2,\\i+j=k-3}}n\cdot\left\lfloor\frac{i+1}{2}\right\rfloor\cdot m_j\\
        =\ &n\cdot \left(\left\lfloor\frac{k-1}{2}\right\rfloor+\sum_{j=2}^{k-3}\left\lfloor\frac{k-2-j+1}{2}\right\rfloor\cdot m_j+\sum_{j=2}^{k-4}\left\lfloor\frac{k-3-j+1}{2}\right\rfloor\cdot m_j  \right)\\
        =\ &n\cdot \left(\left\lfloor\frac{k-1}{2}\right\rfloor+\sum_{j=2}^{k-4}(k-2-j)\cdot m_j+\left\lfloor\frac{k-2-(k-3)+1}{2}\right\rfloor\cdot m_{k-3}  \right)\\
        =\ &n\cdot \left(\left\lfloor\frac{k-1}{2}\right\rfloor+\sum_{j=2}^{k-3}(k-2-j)\cdot m_j  \right)\\
        =\ &n\cdot \left(\left\lfloor\frac{k-1}{2}\right\rfloor+\sum_{i=1}^{k-4}i\cdot m_{k-2-i}  \right).
    \end{align*}

    Therefore, 
    \begin{align*}
        r_k:=\ &r_k(I^*_{C_{p^n}})=r_k((H^*(\tilde{R}_{C_{p^n}})\otimes \mathscr{A}^*)[-2])-r_k(\mathscr{A}^*/\mathbb{F}_p\{1,\beta\}[-1])\\
        =\ &\begin{cases}
            n\cdot \left(\left\lfloor\displaystyle\frac{k-1}{2}\right\rfloor+\displaystyle\sum_{i=1}^{k-4}i\cdot m_{k-2-i}  \right)-m_{k-1} & \text{ if } k\geqslant 3,\\
            0 & \text{ if } k\leqslant 2.
        \end{cases}
    \end{align*}
\end{proof}

\bibliographystyle{alpha}
\bibliography{References}

\end{document}